\CompileMatrices\SelectTips{cm}{12}
\theoremstyle{plain}
\newtheorem{Thm}{\sc Theorem}[section]
\newtheorem{theorem}[Thm]{\sc Theorem}
\newtheorem{corollary}[Thm]{\sc Corollary}
\newtheorem*{corollary*}{\sc Corollary}
\newtheorem{proposition}[Thm]{\sc Proposition}
\newtheorem*{proposition*}{\sc Proposition}
\newtheorem{lemma}[Thm]{\sc Lemma}
\newtheorem{conjecture}[Thm]{\sc Conjecture}
\newtheorem{question}[Thm]{\sc Questions}
\newtheorem{definition}[Thm]{\sc Definition}
\theoremstyle{remark}
\newtheorem{remark}[Thm]{Remark}
\newtheorem{example}[Thm]{Example}
\newtheorem*{example*}{Example}
\newtheorem*{remark*}{Remark}
\newcommand{\cA}{{\mathcal A}}
\newcommand{\cC}{{\mathcal C}}
\newcommand{\cD}{{\mathcal D}}
\newcommand{\cH}{{\mathcal H}}
\newcommand{\cM}{{\mathcal M}}
\newcommand{\cO}{{\mathcal O}}
\newcommand{\cV}{{\mathcal V}}
\newcommand{\cX}{{\mathcal X}}
\renewcommand{\AA}{{\mathbb A}}
\newcommand{\CC}{{\mathbb C}}
\newcommand{\GG}{{\mathbb G}}
\newcommand{\NN}{{\mathbb N}}
\newcommand{\PP}{{\mathbb P}}
\newcommand{\QQ}{{\mathbb Q}}
\newcommand{\VV}{{\mathbb V}}
\newcommand{\WW}{{\mathbb W}}
\newcommand{\ZZ}{{\mathbb Z}}
\newcommand{\CH}{{\mathop{{\rm CH}}}}
\newcommand{\Gr}{\mathop{\rm Gr}}
\newcommand{\Alb}{\mathop{\rm Alb \, }}
\newcommand{\Tr}{\mathop{\rm Tr}}
\newcommand{\id}{{\mathop{\rm id}}}
\newcommand{\Spec}{\mathop{\rm Spec \, }}
\newcommand{\Vect}{\mathop{{\rm Vect}}}
\newcommand{\alg}{\mathop{{\rm alg}}}
\newcommand{\rs}{\mathop{{\rm rs}}}
\newcommand{\an}{{\mathop{\rm an }}}
\newcommand{\ch}{{\mathop{\rm ch \, }}}
\newcommand{\chr}{\mathop{\rm char}}
\newcommand{\Hom}{{\mathop{{\rm Hom}}}}
\newcommand{\orb}{\mathop{{\rm orb}}}
\newcommand{\GL}{\mathop{\rm GL}}
\newcommand{\PGL}{\mathop{\rm PGL}}
\newcommand{\SL}{\mathop{\rm SL}}
\newcommand{\Sym}{{\mathop{\rm Sym}}}
\newcommand{\Pic}{{\mathop{\rm Pic\, }}}
\newcommand{\rk}{\mathop{{\rm rk}}}
\newcommand{\MIC}{{\mathop{{\rm MIC}}}}
\begin{document}

\markboth{\rm }{\rm  }

\title{On algebraic Chern classes of flat vector bundles }
\author{Adrian Langer}

\date{\today}

\maketitle

{\sc Address:}\\
Institute of Mathematics, University of Warsaw,
ul.\ Banacha 2, 02-097 Warszawa, Poland\\
e-mail: {\tt alan@mimuw.edu.pl}

\medskip

\begin{abstract} 	
We show that under some assumptions on the monodromy group some combinations of higher Chern classes of flat vector bundles are torsion in the Chow group. Similar results hold for flat vector bundles that deform to such flat vector bundles (also in case of quasi-projective varieties).
	The results are motivated by Bloch's conjecture on Chern classes of flat vector bundles on smooth complex projective varities but in some cases they give a more precise information. 
	
	We also study Higgs version of Bloch's conjecture and analogous problems in the positive characteristic case. 
\end{abstract}

\section*{Introduction}

Let $X$ be a smooth complex projective variety. In \cite{Bl2} Bloch conjectured that Chern classes of flat vector bundles on $X$ live in the lowest stratum of the (conjectural) Bloch--Beilinson filtration (see Conjecture \ref{Bloch's-conjecture}
for a more precise formulation).  A variant of this conjecture was proven by A. Reznikov in \cite{Re}: the Chern classes of a flat vector bundle on a smooth projective variety defined over $\CC$ vanish in Deligne's  cohomology $H^{2i}_{\cD} (X, \QQ(i))$ for $i\ge 2$. On the other hand, the Bloch--Beilinson conjecture says that if $X$ is defined over a number field then the cycle class maps  $\CH ^i (X ) _{\QQ}\to H^{2i}_{\cD} (X, \QQ (i))$ are injective for all $i\ge 1$. In particular, together with Reznikov's results this conjecture implies that if $X$ is defined over a number field then these Chern classes vanish in the Chow groups $\CH ^i (X) _{\QQ}$ for $i\ge 2$. One of the main results of this paper is the following result on Chern classes of families of flat vector bundles:

\begin{theorem}\label{main}
	Let $X$ be a smooth complex projective variety and let $S$ be an irreducible complex variety.
	Let $(V, \nabla)$ be a  rank $2$  vector bundle with a relative integrable connection on $X\times S /S$.
Assume that for  some $s_0\in S (\CC)$	the geometric monodromy group of $(V_{s_0}, \nabla _{s_0})$ contains $\SL(2, \CC)$. If the Bloch--Beilinson conjecture holds for certain explicit 
Shimura type varieties (see Conjecture \ref{Shimura-conj}) then there exists a positive integer $N$ such that for all $s\in S(\CC)$ we have 	
$$N\cdot  (4c_2(V_s)-c_1^2(V_s))=0 $$ 
	in $\CH ^2 (X)$.	
\end{theorem}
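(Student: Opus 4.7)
The plan is to reduce to a torsion statement on a Shimura-type variety $Y$. First, observe the identity $4c_2(V_s) - c_1^2(V_s) = c_2(\End_0(V_s))$, where $\End_0(V_s)$ is the rank $3$ traceless endomorphism bundle of $V_s$; it carries a natural flat connection induced from $\nabla_s$, with structure group $\PGL(2,\CC)$ via the adjoint representation. The hypothesis then reads: the monodromy of $\End_0(V_{s_0})$ is Zariski dense in $\PGL(2,\CC)$. So it suffices to exhibit a positive integer $N$ such that $N\cdot c_2(\End_0(V_s)) = 0$ in $\CH^2(X)$, uniformly in $s\in S(\CC)$.

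Next I would construct a classifying map. Since the monodromy of $\End_0(V_{s_0})$ is Zariski dense, the flat $\PGL_2$-bundle is rigidified (up to finite ambiguity) by its monodromy representation, which lies in a definite component $\cM$ of the character variety $\Hom(\pi_1(X),\PGL(2,\CC))/\!/\PGL(2,\CC)$. The relative connection $(V,\nabla)$ induces a morphism from a finite cover $S'\to S$ into $\cM$, and I would invoke the Shimura-theoretic input behind Conjecture \ref{Shimura-conj} to identify the relevant component with (or map it to) a Shimura-type variety $Y$ defined over a number field, such that $\End_0(V)$ is the pullback of a universal flat $\PGL_2$-bundle $\cE$ on $Y$ via a morphism $F: X\times S'\to Y$.

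With this setup, the torsion assertion follows from Reznikov's theorem combined with the Bloch--Beilinson conjecture for $Y$ (which is the content of Conjecture \ref{Shimura-conj}). Reznikov's theorem gives $c_2(\cE) = 0$ in $H^4_{\cD}(Y,\QQ(2))$, and Bloch--Beilinson for $Y$ then forces $c_2(\cE) = 0$ in $\CH^2(Y)_{\QQ}$, i.e.\ $N\cdot c_2(\cE) = 0$ in $\CH^2(Y)$ for some positive integer $N$. Pulling back by $F$ and restricting to each fiber $X\times\{s\}$ yields $N\cdot c_2(\End_0(V_s))=0$ in $\CH^2(X)$; the finite cover $S'\to S$ contributes at worst a harmless multiplicative adjustment to $N$.

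The main obstacle is constructing the classifying morphism $F$ to a Shimura-type variety $Y$. The component $\cM$ is a complex algebraic object, but identifying it (or the universal family over it) with a Shimura-type variety defined over a number field is a deep question tied to arithmeticity and to Simpson's nonabelian Hodge theory; this is precisely where the hypotheses of the paper and the specific nature of Conjecture \ref{Shimura-conj} do the heavy lifting. A secondary technical issue is the uniformity of $N$ across $s\in S(\CC)$: one must ensure that passage through the relative classifying map does not introduce $s$-dependence in the torsion order, but this is automatic once one works with a single universal class $c_2(\cE)\in \CH^2(Y)$ and then pulls back by $F$.
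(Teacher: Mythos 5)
Your reduction of $4c_2-c_1^2$ to $\Delta(V_s)=c_2(\cEnd_0(V_s))$ and the passage to the adjoint $\PGL(2)$-picture is consistent with the paper, and the Shimura/Reznikov/Bloch--Beilinson mechanism you describe is indeed where Conjecture \ref{Shimura-conj} enters. But the classifying-map step is a genuine gap, for two reasons. First, the map $S\to M_B(X,\PGL(2))$ furnished by the relative connection is only \emph{analytic} (Corollary \ref{moduli-map-for-de_Rham-families}): the Riemann--Hilbert correspondence is not algebraic, and Chern classes in $\CH^2$ depend on the algebraic structure, so one cannot pull back a class $c_2(\cE)\in\CH^2(Y)$ along a map built from monodromy data in the $S$-direction. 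Second, the map to a polydisk Shimura modular stack (\cite[Theorem 11.2]{CS}) exists only for \emph{rigid} Zariski-dense representations; a positive-dimensional family with dense monodromy is generically non-rigid, and in that case the Corlette--Simpson dichotomy says the representation projectively factors through an orbicurve, so the vanishing of $\Delta$ comes from a $\Sym^2$ trick and a dimension count on the curve --- no Shimura variety and no universal bundle $\cE$ are available there. Your proposed $F: X\times S'\to Y$ does not exist in the case that actually carries the family.

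The second, equally serious gap is uniformity over \emph{all} $s\in S(\CC)$, including the fibers where the monodromy degenerates. Example \ref{counterexample} in the paper shows that for a rank $2$ flat bundle with small monodromy (e.g.\ a sum of two flat line bundles) the class $\Delta(V_s)$ can be genuinely non-torsion, so the degenerate fibers cannot be waved away. The paper's route is: use Gabber's specialization theorem \ref{Gabber} together with Lemma \ref{surjectivity-after-extension} to show the geometric generic fiber of the family still has monodromy $\PGL(2)$; apply Proposition \ref{rk2-conj-torsion} there to get $N\cdot\Delta(V_{\bar\eta})=0$; spread this out to a vanishing on $X\times U$ for a dense open $U\subset S$ via $\CH^2(X_K)\simeq\varinjlim\CH^2(X\times U)$; and finally propagate to every closed fiber by a localization-sequence argument in the Chow group, using that $i^*i_*$ is multiplication by $c_1(\cO_{X\times T}(X\times Z))$, which restricts to zero on a fiber. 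Your ``restrict the universal class to each fiber'' replaces exactly this specialization argument, and without it the conclusion for special $s$ --- which is the surprising content of the theorem --- is not established.
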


In fact, under some additional condition on non-rigidity we prove an unconditional result in an arbitrary rank.
An informal statement is as follows (see Theorem \ref{rk-r-families-torsion}).

\begin{theorem}\label{main2}
	Let $X$ be a smooth complex projective variety and let $S$ be an  irreducible complex variety.
	Let $(V, \nabla)$ be a  rank $r$ vector bundle with a relative integrable connection on $X\times S/ S$. 
	Assume that $(V_{\bar \eta}, \nabla_{\bar \eta})$, where $\bar \eta$ is the generic geometric point of $S$, is not projectively rigid and for some $s_0\in S (\CC)$ the geometric monodromy group $M_{\rm geom} (V_{s_0}, \nabla _{s_0})$ contains $\SL(n, \CC)$. Then for all $s\in S$  the characteristic classes of the $\PGL (r)$-bundles associated	to $V_s$  are torsion in $\CH ^r (X\otimes _{\CC} k(s))$ and the 
	order of torsion is uniformly bounded.
\end{theorem}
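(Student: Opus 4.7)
My strategy has three parts: use non-rigidity to produce a positive-dimensional deformation of the projective flat connection within its de Rham moduli; specialize to a flat bundle whose $\PGL(r)$-characteristic classes are controllable via arithmetic methods; and propagate the torsion conclusion to all fibers of the family via a Chow-theoretic argument on $X\times S$.

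After shrinking $S$ and passing to a finite \'etale cover, $(V,\nabla)$ induces a morphism $\varphi\colon S \to M_{dR}(X,r)$ to a component of the de Rham moduli of rank-$r$ flat bundles. Non-rigidity of $(V_{\bar\eta},\nabla_{\bar\eta})$ projectively means the composition with the projection to $M_{dR}^{\PGL}(X,r)$ has positive-dimensional image, and the condition ``geometric monodromy contains $\SL(r,\CC)$'' is Zariski-open (being cut out by non-vanishing of tensor invariants of bounded weight) and thus holds at the generic point of the image.

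Transport the family via Simpson's non-abelian Hodge correspondence to the Dolbeault moduli $M_{Dol}(X,r)$ and flow under the $\CC^*$-action to the fixed locus of complex variations of Hodge structure; by upper semicontinuity of the Zariski closure of monodromy, the limit CVHS retains dense monodromy in $\SL(r,\CC)$. Within this positive-dimensional CVHS locus, Simpson's integrality theorem for rigid CVHS --- combined with the existence of rigid points in the family by a dimension count --- yields a specialization $s_1 \in S$ whose projective flat bundle is defined over $\bar{\QQ}$. Spreading this arithmetic data out over a ring of integers in a number field and reducing modulo a suitable prime $p$ transfers the question to positive characteristic, where the $\PGL(r)$-Chern classes of the resulting flat bundle admit analysis via $p$-curvature and Frobenius descent (the circle of ideas the paper develops in its later sections on positive characteristic), giving torsion modulo $p$ for a density of primes and hence Chow-theoretic torsion of uniformly bounded order in characteristic zero.

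Finally, the relative $\PGL(r)$-characteristic classes define classes $\gamma_i \in \CH^i(X\times S)$ whose fiberwise restrictions recover the pointwise classes. The order-$N$ torsion at $s_1$ propagates to uniform torsion at every $s \in S$ by a noetherianity argument on $\CH^{\ast}(X\times S)$, aided by the fact that $\gamma_i$ already vanishes in de Rham cohomology (because $\nabla$ is flat), so that only torsion can obstruct vanishing of fiberwise restrictions.

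\textbf{Main obstacle.} The crux is the middle step --- combining non-rigidity, big monodromy, and non-abelian Hodge theory to reach a $\bar{\QQ}$-specialization whose $\PGL(r)$-Chern classes are provably torsion. Ensuring that rigid CVHS exist within the positive-dimensional moduli family, and transferring the torsion statement from positive characteristic back to $\CH^{\ast}(X)$ with a bound uniform in $s$, requires careful use of Simpson's integrality theorem together with the paper's own techniques for flat bundles in positive characteristic.
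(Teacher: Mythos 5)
Your middle step --- the heart of the argument --- would not work, and it misses the idea the paper actually uses. The theorem is unconditional, and the mechanism is Zuo's factorization theorem (Proposition \ref{Zuo} via Proposition \ref{rk-r-one}): after twisting to land in $\SL(r,\CC)$, a representation that is Zariski dense in an almost simple group $G$ and \emph{not} rigid factors, after an alteration, through a variety of dimension at most $\rk G=r-1$; hence every degree-$r$ characteristic class of the associated $\PGL(r)$-bundle is pulled back from something of dimension $<r$ and vanishes rationally. You instead use non-rigidity only to hunt for \emph{rigid} CVHS points in the closure of the family, then invoke Simpson integrality, spread out over a number ring, and reduce mod $p$. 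Two problems: first, rigid points need not exist in (the closure of) a given positive-dimensional component, and even if they did, torsion at one special point does not propagate to the others --- Chow-theoretic specialization only goes from the generic fiber to special fibers, which is why the paper proves torsion for $(V_{\bar\eta},\nabla_{\bar\eta})$ over $k(\bar\eta)$ and then specializes via the localization sequence. Second, the transfer from positive characteristic back to $\CH^*(X)_\QQ$ in characteristic zero is precisely the open problem the paper isolates in Subsection \ref{reductions?}: reductions of a non-torsion $L\in\Pic^0(X)$ are all torsion, so ``torsion mod $p$ for a density of primes'' does not imply torsion in $\CH^i(X)$. Your route would at best prove a conditional statement, whereas the claimed theorem is unconditional.

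Two smaller but real gaps. Your assertion that ``geometric monodromy contains $\SL(r,\CC)$'' is Zariski open in $S$ is unsupported: the paper explicitly warns (Subsection \ref{variation}, after Katz's example) that algebraic monodromy groups in families need not even be constructible, and the openness statement of Proposition \ref{density} lives on the Betti side, which is only analytically (not algebraically) comparable to the de Rham family. The correct tool is Gabber's specialization theorem (Theorem \ref{Gabber}) combined with Lemma \ref{surjectivity-after-extension}: the monodromy at the generic geometric point is at least as large as at $s_0$, which is all that is needed. Finally, your last paragraph's ``noetherianity argument on $\CH^*(X\times S)$'' is too vague to carry the uniform bound; the paper's argument restricts $N\cdot\prod\Delta_{i_j}(V)$ to $X\times U$ for a suitable open $U$ using $\CH^r(X_K)\simeq\varinjlim\CH^r(X\times U)$, and then reaches an arbitrary $s\in S$ through an auxiliary one-parameter family and the exact sequence $\CH^{r-1}(X\times Z)\to\CH^r(X\times T)\to\CH^r(X\times(U'\cap T))\to 0$, using that $i^*i_*$ is multiplication by $c_1(\cO(X\times Z))$, which dies on the fiber.
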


The above theorem allows us to reduce Theorem \ref{main} to  the rigid case that follows from the Bloch--Beilinson conjecture. We also prove a generalization of Theorem \ref{main} to families of flat vector bundles on quasi-projective varieties (see Theorem \ref{rk2-families-torsion}).

Note that even if $S$ is a point and $(V, \nabla)$ is a line bundle with an integrable connection, the first Chern class of $V$ need not be torsion (e.g., any non-torsion $\CC$-point of the Jacobian of a smooth complex projective curve gives rise to a unitary flat line bundle with non-torsion first Chern class).
Twisting a rank $2$ flat vector bundle by an appropriate flat line bundle shows that neither $c_2(V)$ nor $c_1^2(V)$ need to be torsion in general. It is more difficult to show examples when this fails for the second Chern class if the first Chern class is trivial but one can still find such examples (see Example \ref{counterexample}).  Theorem 
\ref{main2} implies that such examples have ``small'' geometric monodromy group and they do not live on the same irreducible components of the de Rham moduli space as flat vector bundles with ``large'' geometric monodromy.

\medskip

Bloch's conjecture implies also that Chern classes of a flat vector bundle on a smooth complex projective variety should be torsion in the ring $B^* (X)$ of cycles modulo algebraic equivalence. 

Let us recall that on a smooth complex projective variety C. Simpson in \cite{Si0}, motivated by earlier Hitchin's results in the curve case, gives a one-to-one correspondence  between irreducible flat vector bundles and stable Higgs bundles with Chern classes vanishing in $H^{2m} (X, \QQ )$ for $m\ge 1$. This result is a generalization of the theorem of Narasimhan--Seshadri in the curve case and Donaldson, Uhlenbeck--Yau in higher dimensions, saying that irreducible unitary flat vector bundles are the same as stable vector bundles with vanishing rational Chern classes. However, unlike the above result, Simpson's correspondence usually changes an algebraic structure on the underlying topological vector bundle. Therefore one can also consider analogs of Bloch's conjecture (and its weaker but more precise version in $B^*(X)$) for semistable Higgs bundles with vanishing rational Chern classes.  

In case of usual stable bundles (with the zero Higgs field) these two versions of conjectures (flat and Higgs one) are equivalent. This is a special case of the fact that Chern classes of complex polarizable variations of Hodge structure are the same as for the correspoding Higgs bundles (since the Higgs bundle is obtained as the associated graded for the Hodge filtration). In particular, the conjectures become the same, e.g., for flat bundles arising from the Gauss-Manin connection on the cohomology group of a family of smooth projective varieties.
However, it is not clear if they are equivalent in general (mainly because one does not know existence
of the Bloch--Beilinson filtration). Still one can prove equivalence of weak versions of Bloch's conjecture for flat and Higgs bundles (see Proposition \ref{equivalence}).

One can also prove that various versions of the above conjectures hold for fields of positive characteristic (see Subection \ref{positive-char}). The most useful result in this direction (from the point of view of complex varieties) is the following  theorem (see Remark \ref{Bloch-in-char-p-MIC}):

\begin{theorem}\label{main3}
	Let $X$ be a smooth projective variety of dimension $n$ defined over an algebraically closed field $k$ of positive characteristic $p$. Let $D$ be a simple normal crossing divisor on $X$ and let us assume that $(X, D)$ is liftable to $W_2(k)$. Let us also fix an ample divisor $H$ on $X$.
	Let $(V, \nabla)$ be a slope $H$-semistable bundle of rank $r\le p$ with a logarithmic connection  on $(X, D)$.
	If  $\int _X\Delta (V)\cdot H^{n-2}=0$ then for every $i\ge 1$
	we have $r^i c_i(V) = \binom{r}{i} c_1(V)^i$
	in $B^i (X)_{\QQ}$. Moreover, if $k$ is an algebraic closure of a finite field then equalities $r^i c_i(V) = \binom{r}{i} c_1(V)^i$ hold in $\CH^i (X)_{\QQ}$.
\end{theorem}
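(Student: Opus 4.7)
The plan is to combine the Bogomolov-type inequality for semistable logarithmic connections in positive characteristic (under the $W_2(k)$-liftability and rank $\le p$ hypotheses) with a Jordan--H\"older reduction and a cycle-theoretic restriction to a general complete intersection surface.

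First I would reformulate the target. The identities $r^i c_i(V) = \binom{r}{i} c_1(V)^i$ for all $i \ge 1$ are equivalent, after the formal rational twist replacing $V$ by $V \otimes (\det V)^{-1/r}$, to the vanishing of all rational Chern classes of that twisted bundle, i.e.\ to the vanishing in $B^*(X)_\QQ$ (resp.\ $\CH^*(X)_\QQ$ for $k = \bar\FF_p$) of the characteristic classes of the associated $\PGL(r)$-bundle. The rational twist preserves slope semistability, the logarithmic integrable connection, the rank bound, and the condition $\Delta(V)\cdot H^{n-2}=0$; so I reduce to the case $c_1(V) = 0$ in $\CH^1(X)_\QQ$ and aim to show $c_i(V) = 0$ in $B^i(X)_\QQ$ for every $i \ge 1$.

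Second, I apply the Bogomolov-type inequality $\Delta(V)\cdot H^{n-2} \ge 0$, whose equality case should force a $\nabla$-invariant Jordan--H\"older filtration $0 = V_0 \subset V_1 \subset \cdots \subset V_\ell = V$ with $\nabla$-stable quotients $E_j$ of the same normalized first Chern class as $V$ and each still satisfying $\Delta(E_j) \cdot H^{n-2} = 0$. By Whitney additivity I may then assume that $(V, \nabla)$ itself is $\nabla$-stable. I next restrict to a general complete intersection smooth surface $S = H_1 \cap \cdots \cap H_{n-2}$ with $H_i \in |mH|$ for $m \gg 0$: by Mehta--Ramanathan $(V|_S, \nabla|_S)$ remains slope semistable, $(S, D|_S)$ still lifts to $W_2(k)$, and $\Delta(V|_S) = \Delta(V)\cdot H^{n-2} = 0$, so $c_2(V|_S)$ is a degree-zero $0$-cycle on $S$. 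Hence $c_2(V|_S) = 0$ in $B^2(S)_\QQ$, and over $\bar\FF_p$ even in $\CH^2(S)_\QQ$, since the Albanese kernel of a surface over $\bar\FF_p$ is torsion (Rojtman/Milne). To propagate back to $X$ I would vary $S$ and use a Bertini / weak-Lefschetz-type injectivity of $B^2(X)_\QQ \to B^2(S)_\QQ$ on the classes at hand, obtaining $c_2(V) = 0$ in $B^2(X)_\QQ$; for $i \ge 3$ I iterate the argument by restricting to general complete intersections of codimension $i - 2$, on which all of the hypotheses (liftability, rank bound, semistable logarithmic connection, Bogomolov equality) survive.

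The main obstacle is the structural input in the second step: the claim that the Bogomolov equality forces a $\nabla$-compatible filtration into $\nabla$-stable pieces that still realize the Bogomolov equality. In characteristic zero this is the polystability part of the Kobayashi--Hitchin / Corlette--Simpson correspondence, but here the substitute must come from the Frobenius-semistability of $V$, which is ensured by $r \le p$ together with the $W_2(k)$-liftability of $(X, D)$, combined with an Ogus--Vologodsky / Cartier-type descent to organize the graded pieces. A secondary but more routine difficulty is the codimension-$\ge 3$ propagation: one must check that the $\PGL(r)$-characteristic classes behave well under restriction to complete intersections and that $B^*_\QQ$-vanishing on sufficiently many such restrictions implies $B^*_\QQ$-vanishing on $X$, and likewise for $\CH^*_\QQ$ over $\bar\FF_p$ (where the relevant Griffiths-type groups are expected to be torsion).
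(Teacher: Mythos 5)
Your proposal has a genuine gap at the decisive step, and it also omits the actual positive-characteristic mechanism that drives the paper's proof. The fatal point is the ``Bertini / weak-Lefschetz-type injectivity of $B^2(X)_{\QQ}\to B^2(S)_{\QQ}$'' that you invoke to propagate the vanishing from a general complete intersection surface $S$ back to $X$. No such injectivity holds: for a connected surface $S$ one has $B^2(S)\simeq\ZZ$ via the degree, so injectivity of the restriction on degree-zero classes would say that every codimension-two cycle on $X$ that is numerically trivial is algebraically equivalent to zero after tensoring with $\QQ$, i.e.\ that the rational Griffiths group of $X$ vanishes. This is false in characteristic zero (Griffiths' quintic threefold) and is not known in positive characteristic; in any case it is not something you may assume, and restricting to ``sufficiently many'' surfaces does not help because the kernel of restriction is not controlled by varying $S$. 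The same objection applies, a fortiori, to your proposed iteration for $c_i$ with $i\ge 3$ and to the $\CH^i_{\QQ}$ statement over $\overline{\FF}_p$ (where $\CH^2(X)_{\QQ}\to\CH^2(S)_{\QQ}$ is certainly not injective). The observation that $c_2(V|_S)$ is a degree-zero $0$-cycle, hence trivial in $B^2(S)_{\QQ}$, is correct but carries no information back to $X$.

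What is missing is the Frobenius-theoretic engine: the paper never restricts to surfaces. Instead it runs the canonical Higgs--de Rham flow (alternating the inverse Cartier transform $C^{-1}$ with the associated graded of Simpson's filtration), which is where the hypotheses $r\le p$ and liftability of $(X,D)$ to $W_2(k)$ enter. This produces a sequence of slope $H$-semistable objects $(E_m,\theta_m)$ with $\Delta_i(E_m)=p^{im}\Delta_i(E)$; after twisting by $\det E^{-s_m}$ with $p^m=rs_m+q_m$ one gets a family with only finitely many possible values of $c_1$ and with $\int_X\Delta\cdot H^{n-2}=0$, which is therefore bounded by the semistable boundedness theorems. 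Boundedness forces the set $\{p^{im}\Delta_i(E)\}_{m\ge 0}$ to be finite in $B^i(X)_{\QQ}$, whence $\Delta_i(E)=0$ there, and the identities $r^ic_i=\binom{r}{i}c_1^i$ follow formally; over $\overline{\FF}_p$ all members of the family are defined over a single finite field, so the same finiteness holds already in $\CH^i(X)_{\QQ}$. Your first step (twisting to reduce to $c_1=0$ rationally) and your Jordan--H\"older reduction are compatible with this, but by themselves they do not produce the vanishing; the multiplication of the discriminants by $p^{im}$ combined with boundedness is the whole point, and it has no counterpart in your argument.
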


In the above theorem $\Delta (V)= 2rc_2(V)-(r-1)c_1^2(V)$ is the discriminant of $V$. The semistability condition is necessary even id $D=0$ as in positive characteristic there is essentially no bound on Chern classes of flat vector bundles (e.g., the Frobenius pullback of any coherent $\cO_X$-module acquires an integrable connection).

 The above theorem opens the road for the proof of (weak version of) Bloch's conjecture using reduction to positive characteristic. Unfortunately, this again leads to some questions that seem to be very difficult in general (especially if the answers are positive; see Subsection \ref{reductions?} for details).

\medskip

The paper is organized as follows. In Section 1 we give various preliminaries and prove some auxiliary results.
Section 2 is devoted to proof of various generalizations of Theorems \ref{main} and \ref{main2}. We also formulate various conjectures that were suggested by other conjectures of Beilinson, Bloch, Esnault and Simpson. 
In Section 3 we study Higgs version of Bloch's conjecture and ask some questions about reductions of Chow's ring and of the $B$-ring.

\subsection*{Notation}

Let $X$ be a variety over an algebraically closed field $k$. In this paper a vector bundle is a locally free coherent sheaf of $\cO_X$-modules.

If $X$ is smooth we treat Chern classes of coherent sheaves on $X$ as elements of the Chow ring $\CH^*(X)$. 
Here $\CH ^i(X)$ denotes the group of codimension $i$ algebraic cycles on $X$ modulo rational equivalence.
In this case the first Chern class $c_1: \Pic X\to \CH^1(X)$ is an isomorphism. 

If $E$ is a  rank $r$ coherent sheaf on $X$ then by  $\Delta (E)$ we denote the \emph{discriminant} of $E$.
It is defined as the class $2rc_2(E)-(r-1)c_1^2(E)$ in the Chow group $\CH^2(X)$. 

By $B^i(X)$ we denote the group of codimension $i$ algebraic cycles on $X$ modulo algebraic equivalence. 
We can also form a ring $B^* (X)=\bigoplus B^i (X)$, which comes with a canonical map from $\CH ^* (X)$.

In the following we denote by $\SL (r)$,  $\PGL(r)$ and $\GL(r)$ algebraic groups (or algebraic group schemes)
and by  $\SL (r, k)$,  $\PGL(r, k)$ and $\GL(r, k)$, respectively,  the groups of their $k$-points.

A map $f: Y\to X$ between smooth algebraic $k$-varieties is called an \emph{alteration} if it is proper and generically finite.

\section{Preliminaries}

\subsection{Bloch's conjecture}

The Bloch and Beilinson conjecture predicts that for a smooth complex projective variety $X$ there exists a functorial filtration $$0=F^{i+1}\subset F^i \subset \dots \subset F^1 \subset F^0=\CH^ i (X) _{\QQ} ,$$
which is compatible with products and correspondences (see \cite{Ja1}, \cite{Ja2} and \cite{Vo} for various candidates and properties of this filtration).  In  \cite[Introduction]{Bl2} (see also  \cite[3.1]{Es1}) 
Bloch conjectured the following:

\begin{conjecture} \emph{(Bloch's conjecture)} \label{Bloch's-conjecture}
	If $(V, \nabla)$  is flat then  $c_i (V)\otimes \QQ \in F^i \CH^i (X)_{ \QQ}$ for all $i\ge 1$.
\end{conjecture}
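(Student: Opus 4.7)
The plan is to combine Reznikov's vanishing in Deligne cohomology with a two-step reduction: first, reduce to the case of a rigid flat bundle using the deformation techniques announced as Theorems \ref{main} and \ref{main2}, and then attack the rigid case by appealing to a motivic origin for rigid local systems. As a parallel route, one could instead try to spread out $(X,V,\nabla)$ over a ring of finite type over $\ZZ$, reduce modulo maximal ideals, apply Theorem \ref{main3} in positive characteristic, and lift back using the compatibility with specialisation that any reasonable Bloch--Beilinson filtration is expected to satisfy.

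I would begin by dispatching what is essentially automatic: by Chern--Weil theory $c_i(V)$ maps to zero in de Rham cohomology, and Reznikov's theorem upgrades this to vanishing in $H^{2i}_\cD(X,\QQ(i))$ for $i\ge 2$, while for $i=1$ a flat line bundle lies in $\Pic^0(X)$. Any filtration compatible with the cycle class map into Deligne cohomology (an axiom shared by Jannsen's and Murre's proposals) thereby places $c_i(V)\otimes\QQ$ in $F^1\CH^i(X)_\QQ$. Going from $F^1$ to $F^i$ is the real problem.

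The next step would be to reduce to the rigid case. If $(V,\nabla)$ moves in a non-trivial family parametrised by $S$, Theorem \ref{main2} provides a uniform torsion bound on the $\PGL(r)$-characteristic classes of the fibres; torsion classes lie in every stratum of the rationalised filtration, so after dealing with $c_1$ separately (which lands in $\Pic^0$, hence in $F^1\CH^1(X)_\QQ$) we may assume $(V,\nabla)$ is rigid. For rigid $(V,\nabla)$ I would invoke Simpson's motivicity conjecture for cohomologically rigid local systems: such a local system should arise, possibly after a suitable alteration, as a direct summand of $R^j f_*\QQ$ for a smooth projective family $f\colon Y\to X$. Transporting via the algebraic correspondence, the class $c_i(V)\otimes\QQ$ becomes the push-forward of a Chern-class polynomial attached to a motive defined over a number field, and the Bloch--Beilinson conjecture for that motive then places the image in the deepest stratum $F^i\CH^i(X)_\QQ$.

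The hard part is that this strategy is unavoidably conditional: it replaces Bloch's conjecture by two other deep open conjectures, namely motivicity of rigid local systems and Bloch--Beilinson injectivity for motives over number fields. The alternative mod-$p$ route via Theorem \ref{main3} avoids Simpson's conjecture but introduces a comparable gap, since one must control the specialisation kernel on Chow groups, which is essentially unknown in the generality needed. The truly new unconditional input, and where most of the effort would naturally be invested, is the reduction-to-rigid step, which is precisely what the present paper achieves.
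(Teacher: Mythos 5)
There is nothing in the paper to check your proposal against: the statement you were asked about is Conjecture \ref{Bloch's-conjecture}, which the paper does not prove and explicitly treats as open (it even remarks that the weaker Conjecture \ref{weak-Bloch} in $B^i(X)$ is still unknown). Your text is accordingly not a proof but a strategy conditional on at least two further open conjectures (Simpson's motivicity of rigid local systems and Bloch--Beilinson injectivity), which you acknowledge; that alone means there is a genuine gap, since no unconditional argument is produced.

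Beyond that, the reduction-to-rigid step is overstated in a way that matters. Theorem \ref{rk-r-families-torsion} applies only when the geometric monodromy group at some point of the family contains $\SL(n,\CC)$ and the generic fibre is not projectively rigid, and even then its conclusion concerns only the degree-$r$ products $\prod_j \Delta_{i_j}(V_s)$ of higher discriminants, i.e.\ the characteristic classes of the associated $\PGL(r)$-bundle in $\CH^r$ --- not the individual classes $c_i(V)$ for $2\le i<r$ that Bloch's conjecture is about. A general flat bundle need not satisfy the big-monodromy hypothesis at all, and Example \ref{counterexample} shows that for small monodromy the torsion conclusion can genuinely fail ($\Delta(V)$ non-torsion in $\CH^2(X)$ for a flat rank-$2$ bundle), so ``we may assume $(V,\nabla)$ is rigid'' does not follow from moving in a family. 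The mod-$p$ alternative has the same status: Theorem \ref{main3} yields $r^ic_i=\binom{r}{i}c_1^i$ only in $B^i(X)_{\QQ}$, or in $\CH^i_{\QQ}$ over finite fields, and the passage back to characteristic zero is exactly the unresolved Question of Subsection \ref{reductions?}. What your write-up does capture correctly is the easy part ($i=1,2$ via Reznikov and the expected description of $F^1,F^2$) and the general shape of the paper's actual, much more limited, results.
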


Let us recall that $F^1$ should be the kernel of $\CH^i (X)_{\QQ } \to H^{2i} (X, \CC)$ and
$F^2$ should be the kernel of $\CH^i (X)_{ \QQ }\to H^{2i} _{\cD} (X, \QQ  (i))$. If this  holds then
Bloch's conjecture holds  for $i=1$ since de Rham's Chern classes of a flat vector bundle vanish  and for
$i=2$ by  Reznikov's theorem \cite[Theorem 1.1 and 2.7]{Re}.

\medskip

Let us recall that $B^i(X)$ is the group of codimension $i$ algebraic cycles on $X$ modulo an algebraic equivalence. We have canonical maps $\CH^i (X)\to B^i (X)$, so for every coherent sheaf $E$ on $X$ we can talk about Chern classes $c_i (E) $ in $ B^i (X)$ defined as images of $c_i (E)\in \CH^i (X)$ (by abuse of notation we use $c_i$
in both cases).
Let us recall that existence of the Bloch--Beilinson filtration and the standard conjecture B imply that cycles in the lowest stratum of the Bloch--Beilinson filtration are algebraically equivalent to zero (see \cite[p.~226--227]{Ja2}). In particular, we can consider the following weak form of Bloch's conjecture:

\begin{conjecture} \label{weak-Bloch}
	If $(V, \nabla)$  is flat then  $c_i (V)$ are torsion in $B^i (X)$ for all $i\ge 1$.
\end{conjecture}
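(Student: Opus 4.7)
The plan is to approach Conjecture \ref{weak-Bloch} by reduction to positive characteristic, using Theorem \ref{main3} as the key input. The first step is to spread $(X,V,\nabla)$ out to a smooth projective family $\cX\to \Spec R$ over a finitely generated $\ZZ$-subalgebra $R\subset\CC$, together with a relative flat bundle $(\cV,\nabla)$ and a relatively ample polarization $\cH$. For a Zariski dense set of closed points $s\in \Spec R$ one then has a flat bundle $(V_s,\nabla_s)$ on a smooth projective variety $X_s$ defined over an algebraic closure of a finite field, and the specialization map $B^i(X)\to B^i(X_s)$ sends $c_i(V)$ to $c_i(V_s)$. The idea is to verify the hypotheses of Theorem \ref{main3} for almost all $s$, conclude that $c_i(V_s)$ is torsion in $B^i(X_s)$, and then descend back to the generic fibre.

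The two hypotheses of Theorem \ref{main3} to check are slope $H_s$-semistability of $(V_s,\nabla_s)$ and the degree vanishing $\int_{X_s}\Delta(V_s)\cdot H_s^{n-2}=0$. The second is just a number, and it follows from $\Delta(V)=0$ in $H^4(X,\QQ)$, which holds because the Chern--Weil forms of a flat connection vanish; this integer specializes to zero on almost all fibres. For the first, Simpson's correspondence identifies semisimplifications of $V$ with polystable Higgs bundles having rationally trivial Chern classes in cohomology; to transfer this polystability to characteristic $p$ with $r\le p(s)$, one would invoke the Ogus--Vologodsky correspondence together with the Higgs--de Rham flow machinery of Lan--Sheng--Zuo, which applies to a dense set of $s$. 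With both hypotheses in place, Theorem \ref{main3} gives $r^i c_i(V_s)=\binom{r}{i}c_1(V_s)^i$ in $B^i(X_s)_\QQ$. Now, $c_1$ of any flat bundle on a smooth complex projective variety vanishes in $\NS(X)_\QQ=B^1(X)_\QQ$ because its de Rham class is zero, and this vanishing passes to every fibre, so $c_1(V_s)^i=0$ in $B^i(X_s)_\QQ$ and consequently $c_i(V_s)$ is torsion in $B^i(X_s)$ for almost all $s$.

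The hard part will be descending this pointwise torsion statement back to the generic fibre. The specialization map $B^i(X)\to B^i(X_s)$ is not a priori injective on torsion, so torsion of $c_i(V_s)$ at each $s$ does not formally imply torsion of $c_i(V)$. To close this gap one would want a relative version of Theorem \ref{main3} over $\Spec R$ yielding an integer $N$, independent of $s$, such that $N\cdot c_i(\cV)$ is algebraically trivial fibrewise; a relative Hilbert-scheme argument could then propagate the triviality to the geometric generic fibre and back to $X$ via a flat-base-change specialization. A secondary but equally serious obstacle is the transfer of Simpson-style polystability into characteristic $p$: current methods require preperiodicity of the Higgs--de Rham flow and the rank bound $r\le p$, both of which are unconditional only in restricted situations, so the strategy remains genuinely conditional on the reduction-mod-$p$ questions explicitly raised in Section~3. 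In this sense the conjecture reduces, modulo Theorem \ref{main3}, to two purely technical questions about specialization of algebraic equivalence and about Higgs--de Rham periodicity in characteristic $p$.
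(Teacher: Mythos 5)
The statement you are asked to prove is Conjecture \ref{weak-Bloch}, and the paper does not prove it: immediately after stating it the author writes that ``even this weak form of Bloch's conjecture is still unknown.'' So there is no proof in the paper to match your argument against; what the paper does offer is precisely the reduction strategy you describe, in the subsection ``Can we prove Bloch's conjecture by reduction?''. There the author spreads $(X,V,\nabla)$ out over a finitely generated ring $R$, notes that for closed points $s$ in an open subset of $\Spec R$ the pair $(X_s,D_s=0)$ lifts to $W_2(k(s))$, that $(\cV_s,\nabla_s)$ is slope semistable by openness of semistability, and that $r\le p(s)$ for almost all $s$, and then applies Theorem \ref{main3} (in the connection form of Remark \ref{Bloch-in-char-p-MIC}) to conclude $r^i c_i(\cV_s)=\binom{r}{i}c_1(\cV_s)^i$ in $\CH^i(\cX_s)_{\QQ}$ for all closed $s$ in a dense open set. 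Up to that point your proposal and the paper coincide.

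The genuine gap is the one you yourself flag in your last paragraph: passing from torsion of $c_i(V_s)$ in $B^i(X_s)$ for (almost) all closed $s$ back to torsion of $c_i(V)$ in $B^i(X)$. The paper isolates exactly this as an open Question (its part 2 is: if $C_s$ is torsion in $\CH^i(\cX_s)$ for all closed $s\in U$, is $C_{\CC}$ torsion in $B^i(X)$?) and states that it ``seems to be very difficult in general,'' with positive answers known only for $i=1$ and $i=\dim X$. The paper's own cautionary example --- a non-torsion $L\in\Pic^0(X)$ all of whose reductions are torsion --- shows that the analogous descent in $\CH^i$ fails outright, so the $B^i$ version cannot be dismissed as a formality. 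Your suggested fix (a uniform $N$ from a relative version of Theorem \ref{main3} plus a relative Hilbert-scheme argument) is only a sketch of a possible attack, not an argument, so the proposal does not constitute a proof; it is a conditional reduction of the conjecture to this open specialization question, which is exactly where the paper leaves matters.

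One secondary point: the ``equally serious obstacle'' you raise about transferring Simpson polystability to characteristic $p$ via Ogus--Vologodsky and Higgs--de Rham preperiodicity is not actually needed for this step. Flat bundles on a smooth complex projective variety have vanishing rational Chern classes, hence are slope semistable for every polarization, and semistability is an open condition in the family $\cX\to\Spec R$; Theorem \ref{main3} is stated for slope semistable bundles with a (logarithmic) connection, so it applies to $(\cV_s,\nabla_s)$ directly for almost all $s$ without any appeal to the Higgs side. The Higgs--de Rham flow is used inside the proof of Theorem \ref{main3}, but its hypotheses are already met here, so the only real obstruction is the descent question above.
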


Even this weak form of Bloch's conjecture is still unknown. 

\subsection{The Riemann--Hilbert correspondence}

Let $X$ be a smooth complex quasi-projective variety with a fixed basepoint $x\in X(\CC)$. 
Let $\rho: \pi_1 (X^{\an},x)\to \GL(r, \CC)$ be a representation of the topological fundamental group of $X^{\an}$
and let $\VV_{\rho}$ denote the corresponding local system on $X^{\an}$.
We have an associated flat analytic vector bundle $(V^{\an}_{\rho}, \nabla^{\an} _{\rho})$ defined by $V^{\an}_{\rho}=\cO _{X^{\an}}\otimes _{\CC} \VV _{\rho}$ and  $ \nabla^{\an}_{\rho}=d\otimes \id _{\VV _{\rho}}$.
On $X$ there exists a unique (up to an isomorphism) algebraic vector bundle $V_{\rho}$ 
with an algebraic integrable connection $\nabla_{\rho}$ such that $(V_{\rho},\nabla_{\rho})$ has regular singularities at infinity and its analytification $(V_{\rho},\nabla_{\rho})^{\an}$ is isomorphic to $(V^{\an}_{\rho},\nabla ^{\an}_{\rho})$.

Moreover, if $(V,\nabla)$ is a vector bundle on $X$  with an integrable connection and  regular singularities at infinity then it is isomorphic to  $(V_{\rho},\nabla_{\rho})$, where $\rho$ is the monodromy representation of 
$(V^{\an},\nabla ^{\an})$. In fact, we have an equivalence of categories between the category of vector bundles with an integrable connection and regular singularities on $X$ and local systems on $X^{\an}$ (see, e.g., \cite[Corollary 5.3.10]{HTT}).
This is called (Deligne's) Riemann--Hilbert correspondence. We will need the following functoriality of this correspondence:

\begin{lemma}\label{functorial-RH}
	Let $f: (X, x)\to (Y,y)$ be a morphism of smooth complex quasi-projective varieties and let $f_*: \pi_1 (X^{\an},x)\to
	\pi_1 (Y^{\an},y)$ be the induced map. If $\rho: \pi_1 (Y^{\an},y)\to \GL(r, \CC)$ is a representation then
	$$ f^*(V_{\rho},\nabla_{\rho})\simeq (V_{\rho\circ f_* },\nabla_{\rho\circ f_*}).$$
\end{lemma}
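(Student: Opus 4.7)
The plan is to reduce to the analytic statement, which is essentially the standard functoriality of the correspondence between local systems and representations, and then to upgrade to the algebraic statement by invoking the uniqueness in Deligne's Riemann--Hilbert correspondence.

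First I would work on the analytic side. For the continuous map $f^{\an}: X^{\an}\to Y^{\an}$, the inverse image $(f^{\an})^{-1}\VV_{\rho}$ is a local system on $X^{\an}$, and by the standard dictionary between local systems and representations of the fundamental group it corresponds to the composition $\rho\circ f_*: \pi_1 (X^{\an},x)\to \GL(r,\CC)$; in other words, $(f^{\an})^{-1}\VV_{\rho}\simeq \VV_{\rho\circ f_*}$. Since $V_{\rho}^{\an}=\cO_{Y^{\an}}\otimes_{\CC}\VV_{\rho}$ with $\nabla^{\an}_{\rho}=d\otimes \id$, pulling back yields
\[
(f^{\an})^*(V_{\rho}^{\an},\nabla^{\an}_{\rho})\simeq (\cO_{X^{\an}}\otimes_{\CC}(f^{\an})^{-1}\VV_{\rho}, d\otimes \id)\simeq (V^{\an}_{\rho\circ f_*},\nabla^{\an}_{\rho\circ f_*}).
\]
This gives the desired isomorphism after analytification.

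Next I would transfer this to the algebraic category. Both $f^*(V_{\rho},\nabla_{\rho})$ and $(V_{\rho\circ f_*},\nabla_{\rho\circ f_*})$ are algebraic vector bundles on $X$ with integrable connections, and by what was just shown their analytifications are isomorphic. Since analytification commutes with pullback, the uniqueness part of Deligne's Riemann--Hilbert correspondence recalled above the lemma will finish the proof, provided we know that $f^*(V_{\rho},\nabla_{\rho})$ also has regular singularities at infinity.

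The main (mildly technical) point is therefore that regular singularities are preserved under algebraic pullback. To see this, I would choose smooth compactifications $\bar X\supset X$ and $\bar Y\supset Y$ with simple normal crossing boundary divisors, and after further blowing up $\bar X$ if necessary (using resolution of indeterminacies) assume that $f$ extends to a morphism $\bar f:\bar X\to \bar Y$. Deligne's canonical extension of $(V_{\rho},\nabla_{\rho})$ to $\bar Y$ has logarithmic poles along $\bar Y\setminus Y$, and its pullback under $\bar f$ is a vector bundle on $\bar X$ with a connection having logarithmic poles along $\bar f^{-1}(\bar Y\setminus Y)$, hence with regular singularities along $\bar X\setminus X$. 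This extends $f^*(V_{\rho},\nabla_{\rho})$ regularly to $\bar X$, so the regular singularities hypothesis is satisfied, and the uniqueness of the algebraic regular singular model produces the required isomorphism with $(V_{\rho\circ f_*},\nabla_{\rho\circ f_*})$.
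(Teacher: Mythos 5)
Your proof is correct, but it takes a genuinely different route from the paper's. The paper deduces the lemma from the $D$-module formalism: it invokes the Riemann--Hilbert equivalence for regular holonomic $D$-modules and perverse sheaves, the compatibility of that equivalence with derived pullback, and the fact that any morphism is non-characteristic for a flat vector bundle (so the derived $D$-module pullback reduces to the ordinary $\cO$-module pullback). You instead argue directly at the level of the correspondence as stated before the lemma: functoriality of the local-system/representation dictionary identifies the analytifications, and then the uniqueness clause of Deligne's Riemann--Hilbert correspondence finishes the job once one knows that $f^*(V_{\rho},\nabla_{\rho})$ is again regular singular at infinity; you supply this via Deligne's canonical (logarithmic) extension pulled back along an extension $\bar f:\bar X\to\bar Y$ of $f$ to suitable compactifications. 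Your approach is more elementary and self-contained (no perverse sheaves or characteristic varieties), at the cost of having to verify stability of regular singularities under pullback --- which is a nontrivial theorem of Deligne, though your sketch of it is essentially the standard proof; one should just note in passing that regularity of a connection on $X$ does not depend on the chosen SNC compactification, so the extra blow-ups of $\bar X$ needed to resolve the indeterminacy of $\bar X\dashrightarrow\bar Y$ are harmless, and that $\bar f^{-1}(\bar Y\setminus Y)$ is supported in $\bar X\setminus X$, so the pulled-back connection is indeed logarithmic along the full boundary divisor. The paper's route packages all of these verifications into the cited theorems of Hotta--Takeuchi--Tanisaki.
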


\begin{proof}
This can be seen as a special case of functoriality for $D$-modules.
More precisely, Deligne's Riemann--Hilbert correspondence can be extended to an equivalence
of the categories of regular holonomic $D_X$-modules and perverse $\CC_{X^{\an}}$-modules with algebraically constructible cohomology (see \cite[Theorem 7.2.5]{HTT}). Commutativity of the  derived pull-back functor on $D$-modules with the Riemann--Hilbert correspondence follows from \cite[Theorem 7.1.1]{HTT}. So we need only to check  that the derived pull-back of $D$-modules agrees with the usual pull-back of flat vector bundles.
But the characteristic variety of a flat vector bundle is the zero section, so $f$ is non-characteristic with respect
to any flat vector bundle (see \cite[Definition 2.4.2]{HTT}). So the required assertion follows from  \cite[Theorem  2.4.6]{HTT}.   
\end{proof}

\subsection{Geometric monodromy groups}

Let $\Gamma$ be a group and $V$ a finite dimensional complex vector space. We say that a representation $\rho: \Gamma \to \GL (V)$ is \emph{Lie irreducible} if it is irreducible on all finite index subgroups of $\Gamma$. 
The \emph{geometric monodromy group $M_{\rm geom}(\rho)$} is the Zariski closure of $\rho (\Gamma)$ in $\GL (V)$, i.e., the smallest Zariski closed subgroup of $\GL (V)$, whose $\CC$-points contain the image of $\rho$.
A representation $\rho$ is Lie irreducible if and only if $V$ is simple as a module over the connected component $M_{\rm geom}(\rho)^{0}$ of $M_{\rm geom}(\rho)$.   

\begin{lemma}\label{Lie-irred-lemma}
	Let $X$ be a smooth complex quasi-projective variety and let $\rho: \pi_1 (X^{\an},x)\to \GL(r,\CC)$ be a representation. Then the following conditions are equivalent:
	\begin{enumerate}
		\item $\rho$ is not Lie irreducible.
		\item There exists a finite \'etale covering $f:(Y,y)\to (X,x)$ such that $f^*\rho: \pi_1 (Y^{\an},y)\to \GL(r,\CC)$ is not irreducible.
		\item There exists an alteration $f:(Y,y)\to (X,x)$ such that $f^*\rho: \pi_1 (Y^{\an},y)\to \GL(r,\CC)$ is not irreducible.
	\end{enumerate}
\end{lemma}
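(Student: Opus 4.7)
The plan is to prove the equivalences via $(1) \Rightarrow (2) \Rightarrow (3) \Rightarrow (1)$, where $(2)\Rightarrow(3)$ is trivial (a finite \'etale cover is automatically proper and generically finite, and between smooth varieties, hence an alteration), and the real work lies in $(1) \Rightarrow (2)$ and $(3) \Rightarrow (1)$.

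For $(1) \Rightarrow (2)$, unfold the definition of Lie irreducibility: if $\rho$ fails to be Lie irreducible, then by definition there exists a finite index subgroup $H \le \pi_1(X^{\an}, x)$ such that $\rho|_H$ is reducible. By the topological classification of covers, $H$ corresponds to a connected finite \'etale topological covering $f^{\an}: (Y^{\an}, y) \to (X^{\an}, x)$. Since $X$ is smooth quasi-projective over $\CC$, the Riemann existence theorem furnishes a unique finite \'etale algebraic covering $f: (Y,y) \to (X,x)$ analytifying to $f^{\an}$. Then $f_*$ identifies $\pi_1(Y^{\an}, y)$ with $H$, so $f^*\rho = \rho \circ f_*$ is (up to this identification) $\rho|_H$, which is reducible.

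For $(3) \Rightarrow (1)$, the key claim is that for \emph{any} surjective alteration $f: Y \to X$ of smooth complex varieties, the image of $f_*: \pi_1(Y^{\an}, y) \to \pi_1(X^{\an}, x)$ is a finite index subgroup. Granting this, if $W \subset \CC^r$ is a proper non-zero subspace invariant under $f^*\rho = \rho \circ f_*$, then $W$ is invariant under $\rho(\im f_*)$, so $\rho$ restricted to the finite index subgroup $\im f_*$ is reducible, contradicting Lie irreducibility. To prove the finite-index claim, choose a Zariski dense open $U \subset X$ over which $f^{-1}(U) \to U$ is finite \'etale; such $U$ exists because $f$ is generically finite and we are in characteristic zero. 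The image of $\pi_1(f^{-1}(U)^{\an}) \to \pi_1(U^{\an})$ is then a finite index subgroup. Since $X \setminus U$ and $Y \setminus f^{-1}(U)$ are proper closed algebraic subsets of smooth varieties, they have real codimension at least $2$, so the inclusions induce surjections $\pi_1(U^{\an}) \twoheadrightarrow \pi_1(X^{\an})$ and $\pi_1(f^{-1}(U)^{\an}) \twoheadrightarrow \pi_1(Y^{\an})$. Chasing these through the commutative square containing $f_*$, the image of $f_*$ contains a finite index subgroup of $\pi_1(X^{\an})$, hence has finite index itself.

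The main obstacle is the $(3) \Rightarrow (1)$ direction, specifically the finite-index statement for the image of $f_*$ under an alteration; the finite \'etale case in $(1) \Rightarrow (2)$ is essentially tautological via Riemann existence. One technical subtlety to handle carefully is the choice of basepoint: one must pick $y \in Y$ with $f(y) = x$ (possible by surjectivity) so that $f_*$ is defined on pointed fundamental groups; otherwise one works with $\im f_*$ only up to conjugation, which does not affect reducibility of $\rho \circ f_*$.
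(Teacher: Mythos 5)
Your proposal is correct and follows the same route as the paper: the cycle $(1)\Rightarrow(2)\Rightarrow(3)\Rightarrow(1)$, with the finite-index subgroup giving a finite \'etale cover for $(1)\Rightarrow(2)$ and the finite index of $\im f_*$ for an alteration giving $(3)\Rightarrow(1)$. The only differences are cosmetic: the paper first replaces $H$ by a normal finite-index subgroup before taking the cover (unnecessary, as you show), and it merely asserts the finite-index claim for alterations that you prove in detail via generic \'etaleness and surjectivity of $\pi_1$ after removing closed subvarieties.
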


\begin{proof}
	Assume 1 and let $H \subset  G=\pi_1 (X^{\an},x)$ be a finite index subgroup such that $\rho|_{H}$ is not irreducible. 
	Standard arguments show that  exists a finite index normal subgroup $H'$ in $G$, which is contained in $H$. Namely, we can set $H'=\ker \varphi$, where $\varphi: G\to \Sigma_{G/H}$ comes from the left $G$-action on the set of left cosets of $H$ in $G$.  
	Clearly, the restriction  $\rho|_{H'}$ is not irreducible. Then as  $f$ in 2 we can take the \'etale covering defined by 
	the quotient $\pi_1 (X^{\an},x) \to \pi_1 (X^{\an},x)/H'$.
	
	Clearly 2 implies 3. To see that 3 implies 1 it is sufficient to note that if $f$ is an alteration then the image of 
	$\pi_1 (Y^{\an},y)\to \pi_1 (X^{\an},x)$ has finite index.
\end{proof}

\medskip

If $(V, \nabla)$ is a flat vector bundle on $X$ then the \emph{geometric monodromy group} $M_{\rm geom}(V, \nabla , x)$ of $(V, \nabla)$ at $x\in X(\CC)$ is the geometric monodromy group of the associated monodromy representation 
$\rho: \pi_1(X,x)\to \GL (r, \CC)$.

\begin{lemma} \label{connected-components-monodromy}
	Let $f: (Y,y)\to (X,x)$ be an alteration of pointed smooth quasi-projective complex varieties.
	Then for any flat vector bundle $(V, \nabla)$ on $X$ with regular singularities
	the induced map of connected components $$M_{\rm geom}^0(f^*(V, \nabla), y)\to M_{\rm geom}^{0}(V, \nabla , x)$$
	is an isomorphism. 
\end{lemma}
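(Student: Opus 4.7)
The plan is to translate the statement via the Riemann--Hilbert correspondence (Lemma \ref{functorial-RH}) into the assertion that, for $G=\pi_1(X^{\an},x)$, $H=f_*(\pi_1(Y^{\an},y))\subseteq G$, and $\rho:G\to\GL(r,\CC)$ the monodromy of $(V,\nabla)$ at $x$, the identity components of $M:=\overline{\rho(G)}$ and $M':=\overline{\rho(H)}$ (Zariski closures in $\GL(r,\CC)$) coincide. By Lemma \ref{functorial-RH} the monodromy of $f^*(V,\nabla)$ at $y$ is exactly $\rho\circ f_*$, so $M_{\rm geom}(f^*(V,\nabla),y)=M'$ and the natural map in the lemma is the inclusion $M'\hookrightarrow M$ restricted to identity components. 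The inclusion $(M')^0\subseteq M^0$ is automatic, so the content is the reverse one.

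The argument then splits into two independent ingredients. First, I would verify that $H$ has finite index in $G$: since $f$ is proper and generically finite, pick a dense open $U\subseteq X$ over which $f$ is finite étale, so that $\pi_1(f^{-1}(U)^{\an})\to\pi_1(U^{\an})$ has image of finite index. Both $\pi_1(f^{-1}(U)^{\an})\to\pi_1(Y^{\an},y)$ and $\pi_1(U^{\an})\to G$ are surjective because removing a closed subset of positive codimension from a smooth complex variety induces a surjection on topological $\pi_1$; a short diagram chase then yields the finite-index claim (this is the observation already used at the end of the proof of Lemma \ref{Lie-irred-lemma}).

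Second, I would invoke the elementary fact that a closed subgroup of finite index in a linear algebraic group contains the identity component of the ambient group. Writing $G$ as a finite disjoint union of left cosets $g_iH$ and taking Zariski closures gives $M\subseteq\bigcup_i\rho(g_i)M'$, so $M'\subseteq M$ has finite index; hence $M^0\cap M'$ has finite index in the connected algebraic group $M^0$. A connected algebraic group admits no proper closed subgroup of finite index (otherwise it would be a nontrivial finite disjoint union of closed subvarieties, contradicting connectedness), so $M^0\subseteq M'$ and therefore $M^0\subseteq (M')^0$, completing the argument.

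I do not anticipate a serious obstacle: the only step requiring some care is the finite-index claim for $f_*$, but it reduces to the standard fact that a proper generically finite map is finite étale over a dense open subset, combined with surjectivity of topological $\pi_1$ under removal of a subvariety of positive codimension. The rest of the proof is soft group theory, with all the analytic content absorbed by Lemma \ref{functorial-RH}.
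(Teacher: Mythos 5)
Your proof is correct, and it reaches the same reduction as the paper — namely that everything boils down to showing $M_{\rm geom}(\rho|_H)^0=M_{\rm geom}(\rho)^0$ for $H=f_*(\pi_1(Y^{\an},y))$ of finite index in $\pi_1(X^{\an},x)$ — but it handles that key step quite differently. The paper first replaces $H$ by a normal finite-index subgroup, interprets it as a Galois finite \'etale cover of $X$, and then cites Katz (\cite[Proposition 1.4.5]{Ka1}, together with \cite[Proposition 5.2]{Ka0} to compare the differential Galois group with the Zariski closure of the topological monodromy). You instead stay entirely on the Betti side and settle the point by elementary algebraic group theory: $G=\bigsqcup_i g_iH$ forces $M\subseteq\bigcup_i\rho(g_i)M'$, hence $[M:M']<\infty$, and a connected algebraic group has no proper closed subgroup of finite index, so $M^0\subseteq (M')^0$. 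This is cleaner and more self-contained — it avoids both the passage to a normal subgroup and the external references, and it makes no use of the regular-singularities hypothesis beyond what is needed to define the objects. As a bonus you also supply a proof of the finite-index claim for the image of $f_*$ under an alteration (via generic finite \'etaleness and surjectivity of $\pi_1$ on dense opens), which the paper merely asserts. The only mild caveat is that your identification of the induced map with the inclusion $M'\hookrightarrow M$ uses $(f^*V)_y=V_x$ and the fact that the monodromy of the pullback is $\rho\circ f_*$; this is exactly what Lemma \ref{functorial-RH} (or a direct analytic argument) provides, so there is no gap.
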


\begin{proof}
	The image of  $f_*:\pi_1 (Y^{\an},y)\to \pi_1 (X^{\an},x)$ has finite index.
	So by Lemma \ref{functorial-RH} it is sufficient to prove that if $G \subset  \pi_1 (X^{\an},x)$ is a subgroup of finite index and $\rho :  \pi_1 (X^{\an},x) \to \GL(r)$ is a representation then 
	$M_{\rm geom}(\rho |_G)^{0}= M_{\rm geom}(\rho )^{0}$. But $G$ contains a subgroup $H$ which is normal and of finite index in $\pi_1 (X^{\an},x)$. This subgroup corresponds to a finite \'etale morphism $g: Y'\to X$, which is 
	Galois with group $\pi_1 (X^{\an},x)/H$. Then equality $M_{\rm geom}(\rho |_{H})^{0}= M_{\rm geom}(\rho)^{0}$
	follows from  Lemma \ref{functorial-RH} and \cite[Proposition 1.4.5]{Ka1} (one needs also to use \cite[Proposition 5.2]{Ka0}). This implies the required equality $M_{\rm geom}(\rho |_G)^{0}= M_{\rm geom}(\rho )^{0}$.
\end{proof}

\subsection{Character varieties} \label{character}

Let $\Gamma$ be a finitely generated group and let $G$ be an affine group scheme of finite type over an algebraically closed field $k$. 
There exists  an affine $k$-scheme  $R(\Gamma, G)$ representing the functor, which to a $k$-scheme $S$ associates the set 
$\Hom (\Gamma, G(H^0(S, \cO_S )))$. It comes with a natural $G$-action given by conjugation. If $G$ is reductive we can form the geometric invariant theory quotient $ R(\Gamma, G)/\!\!/ G:= \Spec k[R(\Gamma, G)]^G $, which is called the \emph{character scheme}. This scheme is often called a ``character variety'' but we refrain from this terminology as it is a scheme, which is rarely a variety. Since  $R(\Gamma, G)$ is affine, $ R(\Gamma, G)\to R(\Gamma, G)/\!\!/ G $ is a uniform categorical quotient (a universal categorical quotient if $\chr k=0$).

We say that a representation $\rho : \Gamma \to G(k)$ is \emph{Zariski dense} if the image of $\rho $ is Zariski dense
in $G$ (since $k$ is algebraically closed this is equivalent to $\rho (\Gamma)$ being dense in $G(k)$ with the topology induced by the Zariski topology on $G$).

\begin{proposition}\emph{(see \cite[Proposition 8.2]{AB})} \label{density}
	Assume that $G$ is a connected, semisimple group over $k=\CC$. Then there exists a Zariski open subset 
	$R_{\rm Zd} (\Gamma, G)\subset R (\Gamma, G)$ such that the representation $\rho: \Gamma \to G(\CC)$ 
is Zariski dense if and only if the $\CC$-point corresponding to $\rho$ lies in $R_{\rm Zd} (\Gamma, G)$.
\end{proposition}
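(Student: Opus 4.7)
The plan is to show that the complement $Z := R(\Gamma, G) \setminus R_{\mathrm{Zd}}(\Gamma, G)$, i.e.\ the locus of representations that fail to be Zariski dense, is a Zariski closed subset of $R(\Gamma, G)$. Fixing a finite set of generators $\gamma_1, \dots, \gamma_n$ of $\Gamma$, one embeds $R(\Gamma, G)$ as a closed subscheme of $G^n$ via $\rho \mapsto (\rho(\gamma_1), \dots, \rho(\gamma_n))$. A representation lies in $Z$ exactly when $\rho(\Gamma)$ is contained in some maximal proper closed subgroup of $G$. The key structural input from the theory of semisimple algebraic groups is that for $G$ connected semisimple over $\CC$ there are only finitely many conjugacy classes of maximal proper closed subgroups $H_1, \dots, H_k$: this combines the classification of parabolic subgroups (indexed by subsets of the simple roots), Dynkin's classification of maximal connected reductive subgroups, and the finiteness of conjugacy classes of Lie primitive finite subgroups (those not contained in any proper positive-dimensional closed subgroup). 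One then obtains the set-theoretic decomposition
\begin{equation*}
Z = \bigcup_{i=1}^{k} G \cdot R(\Gamma, H_i),
\end{equation*}
where $G$ acts on $R(\Gamma, G)$ by simultaneous conjugation and each $R(\Gamma, H_i)$ is viewed as a closed subscheme through the inclusion $H_i \hookrightarrow G$.

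Next I would show that each piece $G \cdot R(\Gamma, H_i)$ is Zariski closed. When $H_i$ is parabolic this is immediate, since $G/N_G(H_i) = G/H_i$ is projective, so the associated-bundle map $G \times^{H_i} R(\Gamma, H_i) \to R(\Gamma, G)$ is proper with closed image; concretely, this expresses the closed condition that $\rho(\Gamma)$ admit a common fixed point on the complete variety $G/H_i$. When $H_i$ is non-parabolic---a reductive non-parabolic subgroup or a Lie primitive finite subgroup---the centralizer $Z_G(\rho(\Gamma))$ of any $\rho(\Gamma) \subset H_i$ is reductive, so Matsushima's theorem combined with the Kempf--Ness/Luna description of closed orbits on $R(\Gamma, G)$ shows that each such $G$-orbit is already closed; these closed orbits can then be assembled, organised by their images in the GIT quotient $R(\Gamma, G)/\!\!/ G$, to give closedness of the whole image. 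Taking the union over the finitely many $i$ then yields that $Z$ is closed, hence $R_{\mathrm{Zd}}(\Gamma, G)$ is Zariski open.

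The main obstacle is precisely the non-parabolic case, where $G/N_G(H_i)$ is only affine and so the naive valuative-criterion argument is unavailable. This is exactly the point at which the semisimplicity of $G$ is essential: the representation $\rho_t : \ZZ \to \Gm$, $1 \mapsto t$, whose Zariski-dense locus is $\AA^1$ minus the roots of unity, shows that without semisimplicity the statement itself fails. The fix is to exploit that for semisimple $G$ every non-Zariski-dense image must lie in an algebraic subgroup of positive codimension---not merely in a proper but Zariski-dense subgroup of a torus---and to combine this with Richardson's orbit-closure theorem to control how conjugating elements can degenerate while the family $\rho_t \in R(\Gamma, G) \subset G^n$ stays bounded.
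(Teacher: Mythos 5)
First, note that the paper offers no proof of Proposition \ref{density}: it is quoted directly from \cite[Proposition 8.2]{AB}, so there is nothing internal to compare against and I judge your argument on its own terms. The reduction to finitely many conjugacy classes of maximal proper closed subgroups and the treatment of the parabolic classes (properness of $G/P$, hence closedness of the image of $G\times^{P}R(\Gamma,P)$) are correct and standard. The gap is in the non-parabolic case, and it is genuine. Your claim that for $\rho(\Gamma)\subset H_i$ with $H_i$ reductive the orbit $G\cdot\rho$ is closed is false: by Richardson's criterion the orbit is closed if and only if the Zariski closure of $\rho(\Gamma)$ itself is linearly reductive, and a representation landing in a reductive subgroup $H_i$ can perfectly well land in a Borel subgroup of $H_i$, in which case its closure is not reductive and its orbit is not closed. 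Worse, even when every orbit in $G\cdot R(\Gamma,H_i)$ is closed, this set is an infinite union of closed orbits and need not be closed. Concretely, take $G=\SL(2)$, $\Gamma=\ZZ$ and $H=N(T)$ the normalizer of the diagonal torus, a maximal non-parabolic closed subgroup. Every element of every conjugate of $N(T)$ is semisimple, and every semisimple element lies in a maximal torus, so $G\cdot R(\ZZ,N(T))$ is exactly the set of semisimple elements of $\SL(2,\CC)$ --- a union of closed conjugacy classes whose Zariski closure contains the unipotent elements. So the stratum is not closed, and neither of the two mechanisms you invoke (closedness of individual orbits, assembling them via the GIT quotient) can establish that it is.

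What is true, and what the standard argument actually establishes, is that the closure of each non-parabolic stratum is contained in the union of all the strata (in the example above the extra boundary points are unipotent, hence lie in a Borel, i.e.\ in the parabolic stratum), so that the total non-dense locus is closed even though its pieces are not. Proving this requires replacing the non-proper parameter space $G/N_G(H_i)$ of conjugates of $H_i$ by a proper one --- for instance the closed subvariety of the Grassmannian $\Gr(d,\fg)$ consisting of $d$-dimensional Lie subalgebras, with the closed incidence condition that $\Ad(\rho(\Gamma))$ normalize the subalgebra --- and then checking that boundary degenerations still yield non-dense representations; in the $\SL(2)$ example this is the degeneration of an unordered pair of points of $\PP^1$ to a double point. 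The only place where ``finite union of closed orbits'' is a valid argument is the residual case of Lie-primitive finite images, where $\Hom(\Gamma,F)$ is literally a finite set for each of the finitely many relevant finite groups $F$ and each orbit is closed because a finite image is reductive. As written, your proof establishes openness of $R_{\rm Zd}(\Gamma,G)$ only modulo the closedness of the reductive non-parabolic strata, which is precisely the hard point and which fails stratum by stratum.
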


By abuse of notation  we do not distiguish between an algebraic group and the functor of points of the associated group scheme.

\subsection{Betti moduli spaces}

Let $X$ be a smooth complex quasi-projective variety with a fixed closed point $x\in X(\CC )$.
Let $G$ be a reductive group over $\CC$.  We set $R(X, x, G):=R(\pi_1 (X^{\an},x), G)$. The corresponding
character variety $M_B(X, G):=R(X, x, G)/\!\!/ G$ is called the \emph{Betti moduli space} of $X$ (with values in $G$). It corepresets the functor of isomorphism classes of representations of $\pi_1 (X^{\an},x)$ in $G$ (see Subsection \ref{character}; it also corepresents the functor of $G$-local systems as in \cite{Si}).
In case $G=\GL(r)$ we write  $R(X, x, r)$ and $M_B (X, r)$ instead of $R(X, x, G)$ and $M_B(X, G)$, respectively.

Let us fix a compactification $\bar X$ of $X$, which is smooth projective and such that $D=\bar X-X$ is a simple normal crossing divisor. For each irreducible component $D_i$ of $D$ let us choose $\gamma _i\in \pi_1 (X^{\an},x)$ that is represented by a simple loop around that component.

For each $D_i$ let us choose a closed (reduced) ${\rm Ad} (G)$-invariant subscheme $C_i\subset G$.
There exists an affine $\CC$-scheme  $R(X, x,G, \{ C_i \})$ representing the functor, which to a $k$-scheme $S$ associates the subset of $\Hom (\Gamma, G(H^0(S, \cO_S )))$ of representations $\rho: \Gamma \to G(H^0(S, \cO_S ) )$ such that
$\rho (\gamma _i) \in C_i (H^0(S, \cO_S ))$. It is a closed $G$-invariant subscheme of  $R(X, x, G)$, so we can form 
the geometric invariant theory quotient $M_B (X, G,  \{ C_i \}) =R(X, x,G, \{ C_i \}) //G$, which is a closed subscheme
of  $M_B (X, G)$.

Let us recall that an element $g\in G(\CC)$ is called \emph{quasi-unipotent} if there exists a positive integer $n$ such that $g^n$ is unipotent (this makes sense in an arbitrary linear algebraic group due to existence of multiplicative Jordan decomposition in such groups). 

A representation $\rho: \pi_1 (X^{\an},x) \to G (\CC)$ has \emph{quasi-unipotent monodromy at infinity} if each $\rho (\gamma _i)$ is quasi-unipotent. For such a representation let us define $C_i\subset G$ as the closure of a conjugacy class of $\rho (\gamma _i)$. Then we say that $\rho$ is \emph{rigid} if it is an isolated point of the moduli space $M_B (X, G,  \{ C_i \})$. Note that rigidity of $\rho$ depends on the choice of the compactification $\bar X$ but once we fix it, the classes $\{C_i\} $ are uniquely  determined by $\rho$.

Let us recall that by the monodromy theorem, the monodromy representation of a family of smooth projective varieties has quasi-unipotent monodromy at infinity. 

\medskip

\medskip

If   $\rho: \pi_1(X,x)\to \GL (r, \CC)$  has {quasi-unipotent monodromy at infinity} then  we say that it is \emph{properly rigid}, if $M_{\rm geom}(\rho)$ is reductive and the induced homomorphism $\tilde \rho: \pi_1(X,x)\to  M_{\rm geom}(\rho)$ is rigid
(note that Jordan decomposition is preserved under homomorphisms of algebraic groups, so $\tilde \rho$
has also quasi-unipotent monodromy at infinity). Again, proper rigidity depends not only on $\rho$ but also on the choice of $\bar X$. We will ususally ignore this dependence.

\begin{theorem}\label{rigid-VHS}
Let $\rho: \pi_1(X,x)\to \GL (r, \CC)$ be a properly rigid representation with quasi-unipotent monodromy at infinity.
Then $\rho$ underlies a complex variation of Hodge structures.
\end{theorem}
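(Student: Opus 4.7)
The statement is essentially Simpson's theorem \emph{rigid implies VHS}, extended to the tame quasi-projective setting by Mochizuki. My plan is to reproduce the classical argument via the non-abelian Hodge correspondence and the $\CC^*$-action on the Dolbeault moduli space.

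First I would reduce to a semisimple target. By proper rigidity, $G:=M_{\rm geom}(\rho)$ is reductive, and the induced homomorphism $\tilde\rho:\pi_1(X,x)\to G(\CC)$ is Zariski-dense, has quasi-unipotent monodromy at infinity, and is rigid in $M_B(X,G,\{C_i\})$ for the conjugacy classes $C_i$ of $\tilde\rho(\gamma_i)$. Any complex VHS structure on $\tilde\rho$ is transported to $\rho$ via the inclusion $G\hookrightarrow\GL(r)$, so it suffices to deal with $\tilde\rho$. After quotienting out the (necessarily finite order, up to a central twist) center one may further reduce to a semisimple $G$, where Simpson's machinery applies directly. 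By Lemma \ref{connected-components-monodromy} one may also pass to a finite \'etale cover to replace \emph{quasi-unipotent} by \emph{unipotent} monodromy at infinity.

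Next I would apply the tame non-abelian Hodge correspondence on the smooth projective compactification $(\bar X, D)$: by Simpson and Mochizuki, $\tilde\rho$ corresponds to a polystable parabolic Higgs $G$-bundle $(E,\theta)$ on $(\bar X,D)$ with nilpotent residues and vanishing parabolic characteristic numbers, and the resulting homeomorphism identifies $M_B(X,G,\{C_i\})$ with the corresponding Dolbeault moduli space $M_{\rm Dol}(\bar X,D,G,\{C_i\})$. Now the scaling action $t\cdot(E,\theta)=(E,t\theta)$ preserves polystability, parabolic Chern classes, the parabolic weights (zero in the unipotent case), and nilpotence of the residues; consequently it preserves the stratum $M_{\rm Dol}(\bar X,D,G,\{C_i\})$. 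Properness of the Hitchin map on this stratum guarantees that $\lim_{t\to 0}t\cdot(E,\theta)$ exists in the moduli space, yielding a non-constant morphism $\AA^1\to M_{\rm Dol}(\bar X,D,G,\{C_i\})\cong M_B(X,G,\{C_i\})$ passing through $\tilde\rho$ whenever $(E,\theta)$ is not $\CC^*$-fixed. Rigidity of $\tilde\rho$ then forces $(E,\theta)$ to be a $\CC^*$-fixed point, i.e.\ a system of Hodge bundles; by Simpson this is precisely the Higgs bundle associated to a complex variation of Hodge structure, and this VHS structure yields the one on $\rho$.

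The principal technical obstacle is the behaviour at $D$: one must know that scaling the Higgs field on the Dolbeault side really induces a deformation of $\tilde\rho$ \emph{inside} $M_B(X,G,\{C_i\})$, i.e.\ fixing the prescribed conjugacy classes at infinity, so that rigidity can bite. This is exactly where nilpotence of the residues, obtained after passing to a finite \'etale cover to ensure unipotent monodromy at infinity, is essential, and where one invokes Mochizuki's detailed analysis of tame harmonic bundles. The other delicate point is verifying that the limit of the $\CC^*$-orbit stays in the same stratum of prescribed conjugacy classes, which again hinges on the nilpotent-residue normalization together with standard properness properties of the Hitchin map on moduli of parabolic Higgs bundles with fixed parabolic type.
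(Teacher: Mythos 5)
Your overall strategy is exactly the one underlying the paper's proof: the paper disposes of the projective case by citing Simpson's ``rigid implies VHS'' (Lemma 4.5 of [Si0]) and of the quasi-projective case by citing Theorem 8.1 of Corlette--Simpson, and both of those references run precisely the $\CC^*$-action argument on the Dolbeault moduli space that you reconstruct. So in spirit there is no divergence; the paper's proof is a two-line citation and yours unpacks it.

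There is, however, one step in your write-up that does not work as stated: the reduction to unipotent monodromy at infinity by passing to a finite \'etale cover. Rigidity (and proper rigidity) is \emph{not} preserved under pullback to a finite cover --- the paper itself warns about exactly this in the discussion following Conjecture \ref{rigid-conjecture} (``by passing to a finite covering the representation induced from properly rigid need not be properly rigid''). Your argument needs the nilpotent-residue normalization \emph{and} the rigidity of the representation on the same space, since rigidity is what forces the $\CC^*$-orbit to be constant; you therefore cannot first pass to a cover to gain nilpotent residues and then invoke rigidity there. Corlette--Simpson avoid this by treating the quasi-unipotent case directly: the prescribed conjugacy classes at infinity are encoded in filtered/parabolic local data, and one checks that the deformation $(E,t\theta)$ can be arranged to stay inside the stratum $M_B(X,G,\{C_i\})$ without any covering trick. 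A secondary point, standard but worth stating: even if you did produce a VHS on a cover, descending it to $X$ requires uniqueness of the pluriharmonic metric. Apart from this, your reduction to a Zariski-dense reductive target and the identification of $\CC^*$-fixed points with systems of Hodge bundles are correct and match the cited sources; note also that the paper must (and does) remark that the rank-$2$ hypothesis in [CS, Theorem 8.1] is used only to get weight one, which your rank-independent formulation implicitly handles.
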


\begin{proof}
In case $X$ is projective this is \cite[Lemma 4.5]{Si0}. The same arguments together with
\cite[Theorem 8.1]{CS} give the general assertion. Note that although \cite[Theorem 8.1]{CS}
is formulated only in the rank $2$ case, this assumption is used only in showing that the corresponding 
$\CC$-VHS has weight $1$.
\end{proof}

\subsection{De Rham moduli spaces on projective varieties}

Let $X$ be a smooth complex projective variety with a fixed closed point $x\in X(\CC )$.
Let  $R_{DR} (X,x, r )$ be the $\CC$-scheme, representing the functor which to a $\CC$-scheme $S$ associates the set of isomorphism classes of $(V, \nabla, \alpha)$, where $(V, \nabla)$ is a rank $r$ vector bundle with a relative integrable connection on $X\times S/S$ and  $\alpha: V|_{\{x\} \times S }\mathop{\to}^{\simeq} \cO_S^{r}$ is a frame along the section.
Let us recall that $M_{DR}(X, r)$ is constructed as the geometric invariant theory quotient of  $R_{DR} (X,x, r )$ by $\GL (r, \CC)$ and it is also its universal categorical quotient
(see \cite[Theorem 6.13]{Si}). If two $\CC$-points of $R_{DR} (X,x, r)$ corresponding to triples
$(V, \nabla, \alpha)$ and  $(V', \nabla ', \alpha ')$ are mapped to the same point of $M_{DR}(X, r)$
then there exist filtrations of $V$ and $V'$ such that the associated graded vector bundles are isomorphic.
Therefore the maps
$c_i: (R_{DR} (X,x,  r)(\CC ) \to \CH^i (X),$
induced by Chern classes, descend to  
$c_i: M_{DR} (X, r)(\CC ) \to \CH^i (X).$

Now let $M_{DR}(X, G)$ be the coarse moduli space of principal (right) $G$-bundles on $X$ together with an integrable connection
(see \cite[Theorem 9.10 and an analogue of Proposition 9.7 in the de Rham case]{Si}).

The Riemann--Hilbert correspondence shows that the analytification of  $M_B(X, G)$ is (analytically) isomorphic to the analytification of the de Rham moduli space $M_{DR}(X, G)$ (see \cite[Theorem 9.11]{Si}). So Theorem \ref{main2}
(in case of rank $2$ for simplicity) can be somewhat imprecisely reformulated as saying that if $T$ is an irreducible component of $M_{DR} (X, \PGL(2))$ of dimension $\ge 1$ containing a flat vector bundle with Zariski dense monodromy representation then the image of
$4c_2-c_1^2: M_{DR} (X, \PGL(2)) (\CC)\to  \CH^2 (X)_{}$ lies in $N$-torsion classes for some $N$.

\subsection{De Rham side for quasi-projective varieties}

\begin{proposition}\label{map-for-de_Rham-families}
	Let $X$  be a smooth complex analytic space and let $S$ be an analytic space. Let $(V, \nabla)$ be a  rank $r$ (holomorphic) vector bundle with a relative holomorphic connection on $X\times S /S$. Let $x$ be a point of $X$ and assume that 
	there exists an isomorphism $\alpha : i_x^{*}V\simeq \cO_{S}^r$, where $i_x: S\to X\times S$ denotes the canonical embedding of $S$ onto ${\{x\} \times S}$. Then there exists an analytic morphism 
	$\varphi _{\alpha} : S\to R(X,x, r)^{\an}$ such that for every $s\in S$ the point $\varphi _{\alpha} (s)$ corresponds to the monodromy representation of $(V_s, \nabla _s)$.
\end{proposition}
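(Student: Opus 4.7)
The plan is to build $\varphi_\alpha$ locally on $S$, using the identification of $R(X,x,r)^{\an}$ with a closed analytic subspace of $\GL(r,\CC)^{N}$. I would fix a finite presentation of $\pi_1(X^{\an},x)$ with generators $\gamma_1,\ldots,\gamma_N$ and relations $R_1,\ldots,R_M$; then $R(X,x,r)$ sits in $\GL(r)^N$ as the closed subscheme cut out by the $R_j$, and analytification preserves these defining equations. It therefore suffices to construct, for each generator $\gamma_i$, a holomorphic map $M_{\gamma_i}:S\to\GL(r,\CC)$ which on points sends $s$ to the monodromy of $(V_s,\nabla_s)$ along $\gamma_i$ written in the frame $\alpha_s$. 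Since each $s\in S$ then gives a genuine representation of $\pi_1(X^{\an},x)$, the product $(M_{\gamma_1},\ldots,M_{\gamma_N})$ automatically factors through $R(X,x,r)^{\an}$.

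The construction of $M_\gamma$ will be local on $S$. Given $s_0\in S$ and a loop $\gamma:[0,1]\to X$ based at $x$, I would cover the compact image of $\gamma$ by finitely many opens $U_0,\ldots,U_n$ with $U_0=U_n\ni x$, each biholomorphic to a polydisk and with each $U_i\cap U_{i+1}$ non-empty and connected. For a sufficiently small open $W\ni s_0$ in $S$, the parametric holomorphic Frobenius theorem applied to $\nabla$ on $U_i\times W$ produces a holomorphic frame $e^i_1,\ldots,e^i_r$ of $V|_{U_i\times W}$ of relatively flat sections. I would normalize the frames on $U_0=U_n$ by requiring that their restriction to $\{x\}\times W$ be carried by $\alpha$ to the standard basis of $\cO_W^r$.

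On each overlap $(U_i\cap U_{i+1})\times W$ both frames $\{e^i_j\}$ and $\{e^{i+1}_j\}$ are relatively flat, so they differ by a change-of-basis matrix $A_i(s)\in\GL(r,\CC)$: it is independent of the $X$-direction because two flat sections coinciding on a fibre coincide throughout a connected open, and it is holomorphic in $s\in W$ by holomorphic dependence of the flat frames on parameters. The monodromy matrix along $\gamma$ in the frame $\alpha$ is then $M_\gamma(s)=A_{n-1}(s)\cdots A_0(s)$, manifestly holomorphic on $W$. Because this matrix is intrinsically defined in terms of $(V,\nabla,\alpha)$ and does not depend on the covering or on the intermediate choices of flat frames, the local constructions glue to a global holomorphic map $M_\gamma:S\to\GL(r,\CC)$. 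Applying this to each $\gamma_i$ yields the desired $\varphi_\alpha$.

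The one substantive technical point will be the parametric holomorphic Frobenius theorem: on a polydisk $U$ and a small open $W\subset S$, every relative integrable holomorphic connection on $U\times W$ admits a relatively flat holomorphic frame. After choosing coordinates on $U$, this reduces to holomorphic dependence on the parameter $s\in W$ of the solutions of a parametric system of holomorphic linear ODEs along $U$, which is classical. Everything else is formal manipulation of fundamental groups and of the scheme-theoretic representation variety, so I expect no obstacle beyond verifying this dependence carefully.
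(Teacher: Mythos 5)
Your proposal is correct and follows essentially the same route as the paper: the paper simply delegates your two key steps to Simpson --- \cite[Lemma 7.6]{Si} for the fact that the sheaf $V^{\nabla}$ of relatively flat sections is a locally free $p_S^{-1}(\cO_S)$-module (your parametric holomorphic Frobenius theorem), and \cite[Lemma 7.3]{Si} for the passage from the $\alpha$-framed relative local system to a morphism $S\to R(X,x,r)^{\an}$ (your monodromy matrices along generators). The only point worth tightening is that, since $S$ may be non-reduced, the factorization through $R(X,x,r)^{\an}$ should be justified by noting that the group relations hold as identities of $\GL(r,\cO_S)$-valued holomorphic maps (which your construction in fact yields, via multiplicativity and homotopy invariance of parallel transport at the level of the structure sheaf), rather than merely pointwise.
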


\begin{proof}
	Let $p_S: X\times S\to S$ denote the canonical projection.
	By \cite[Lemma 7.6]{Si} the sheaf $V^{\nabla}$ of sections $v$ of $V$ such that $\nabla (v)=0$ is a locally free sheaf of 
	$p_S^{-1}(\cO_S)$-modules. As in the proof of \cite[Lemma 7.7]{Si} the composition 
	$$\beta: i_x^{-1} (V^{\nabla}) \to i_x^{-1} (V)\to i_x^*(V)\mathop{\to}^{\alpha} \cO_S^r$$
	gives a frame of $V^{\nabla}$ over $i_x$. Now \cite[Lemma 7.3]{Si} works also for any smooth analytic space with a fixed point $(X,x)$, so the pair $(V^{\nabla}, \beta)$ gives rise to the required morphism. 
\end{proof}

\begin{corollary} \label{moduli-map-for-de_Rham-families}
		Let $X$  be a smooth complex analytic space and let $S$ be an analytic space. Let $(V, \nabla)$ be a  rank $r$ (holomorphic) vector bundle with a relative holomorphic connection on $X\times S /S$. Then there exists an analytic morphism $\varphi  : S\to M_B(X, r)^{\an}$ such that for every $s\in S$ the point $\varphi  (s)$ corresponds to the class of the monodromy representation of $(V_s, \nabla _s)$.
\end{corollary}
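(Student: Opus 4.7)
The plan is to reduce to Proposition \ref{map-for-de_Rham-families} by choosing frames locally on $S$ and then to check that the compositions with the quotient map $\pi: R(X,x,r)^{\an} \to M_B(X,r)^{\an}$ glue across overlaps.

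First, choose an open analytic cover $\{U_{\alpha}\}$ of $S$ such that on each $U_{\alpha}$ the pullback $i_x^{*} V|_{U_{\alpha}}$ is a trivial vector bundle, and pick trivializations $\alpha_{\alpha}: i_x^{*}V|_{U_{\alpha}} \simeq \cO_{U_{\alpha}}^r$. Applying Proposition \ref{map-for-de_Rham-families} on each $U_{\alpha}$ yields analytic morphisms
\[\varphi_{\alpha}: U_{\alpha} \to R(X,x,r)^{\an}\]
whose values at a point $s \in U_{\alpha}$ correspond to the monodromy representation of $(V_s,\nabla_s)$ with respect to the trivialization induced by $\alpha_{\alpha}$ at $s$. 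Compose each with the analytification $\pi$ of the GIT quotient map to obtain $\psi_{\alpha} := \pi \circ \varphi_{\alpha}: U_{\alpha} \to M_B(X,r)^{\an}$.

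Next, check gluing on overlaps. On $U_{\alpha} \cap U_{\beta}$ the two trivializations differ by an analytic morphism $g_{\alpha\beta}: U_{\alpha} \cap U_{\beta} \to \GL(r)^{\an}$. Under the interpretation of Proposition \ref{map-for-de_Rham-families} via $V^{\nabla}$, this change of frame changes the induced representation $\pi_1(X^{\an},x) \to \GL(r, H^0(U_{\alpha}\cap U_{\beta}, \cO_S))$ by conjugation by $g_{\alpha\beta}$, i.e.\ $\varphi_{\beta}|_{U_{\alpha}\cap U_{\beta}}$ is obtained from $\varphi_{\alpha}|_{U_{\alpha}\cap U_{\beta}}$ by the action of $g_{\alpha\beta}$ for the conjugation action of $\GL(r)$ on $R(X,x,r)$. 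Since the quotient map $R(X,x,r) \to M_B(X,r)$ is $\GL(r)$-invariant by construction, and invariance is preserved under analytification, we have $\psi_{\alpha} = \psi_{\beta}$ on $U_{\alpha}\cap U_{\beta}$. The local maps $\psi_{\alpha}$ therefore glue to a single analytic morphism $\varphi: S \to M_B(X,r)^{\an}$.

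Finally, the pointwise description follows: for each $s \in S$ contained in some $U_{\alpha}$, the point $\varphi(s) = \pi(\varphi_{\alpha}(s))$ is, by Proposition \ref{map-for-de_Rham-families}, the image in $M_B(X,r)^{\an}$ of the monodromy representation of $(V_s,\nabla_s)$ with respect to any choice of frame at $x$, and this image is independent of the frame because the $\GL(r,\CC)$-orbit of the representation is what the quotient parametrizes. The only nontrivial point in the argument is verifying that change of frame on an overlap corresponds exactly to the conjugation action on $R(X,x,r)$; this is really the content of the construction of $\varphi_{\alpha}$ from $(V^{\nabla},\beta)$ in the proof of Proposition \ref{map-for-de_Rham-families}, transported faithfully to the analytic setting.
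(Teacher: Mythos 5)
Your proof is correct and follows essentially the same route as the paper: trivialize $i_x^*V$ on an open cover of $S$, apply Proposition \ref{map-for-de_Rham-families} locally, compose with the quotient map $R(X,x,r)^{\an}\to M_B(X,r)^{\an}$, and glue using $\GL(r)$-invariance of the quotient. The paper leaves the conjugation/gluing step implicit, whereas you spell it out; no substantive difference.
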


\begin{proof}
Let us fix a point $x\in X$. Then $i_x^*V$ is a vector bundle on $S$ so for every point $s\in S$ we can find an open neighbourhood  $s\in U_s\subset S$ and an isomorphism $\alpha_s:  i_x^{*}V\simeq \cO_{U_s}^r$. Then the maps
 	$ U_s\to M_B(X,r)^{\an}$ obtained by composing $\varphi _{\alpha _s}$ with the quotient map  $R(X,x, r)^{\an}\to M_B (X, r)^{\an}$ glue together to the required map $\varphi$. 
\end{proof}

\subsection{Algebraic fundamental groups and monodromy groups} 

Let $X$ be a smooth geometrically connected quasi-projective variety defined over a field $k$ of characteristic zero.
Let $\MIC  (X/k)$ be the category of locally free $\cO_X$-modules of finite rank with an integrable connection (local freeness follows from coherence of an $\cO_X$-module). It is an abelian, $k$-linear, rigid tensor category.  A rational point $x\in X(k)$ defines a fiber functor $\omega_x: \MIC   (X/k)\to \Vect _k$, $\omega _x (V, \nabla) =V\otimes k(x)$ to the category of finite dimensional $k$-vector spaces. This gives rise to a $k$-group scheme $\pi _1 ^{\alg}(X, x) ={\mathop{\rm Aut} } ^{\otimes} (\omega_x)$. Tannaka duality says that $\MIC  (X/k)$ is equivalent via $\omega_x$ to the representation category 
of   $\pi _1^{\alg} (X, x) $. For any object $(V, \nabla) \in \MIC (X/k) $ we define its \emph{algebraic monodromy group} $M_{\alg} (V, \nabla, x)$ to be the $k$-affine group scheme Tannaka dual to the full Tannakian subcategory $\langle 
(V, \nabla)\rangle$ of 
$\MIC   (X/k)$ spanned by $(V, \nabla)$. By \cite[II, Proposition 2.21 a)]{DMOS} this group scheme is the image of 
$\pi _1 ^{\alg}(X, x)$ in $\GL(V\otimes k(x))$.

Similarly, we can define the category  $\MIC ^{\rs} (X/k)$ as the full subcategory of $\MIC (X/k)$ whose objects are regular singular at infinity. Again this is an abelian, $k$-linear, rigid tensor category and any rational point  $x\in X(k)$
defines a fibre functor, which leads to  $\pi _1^{\alg , \rs} (X, x)$. Note that if $(V, \nabla) \in \MIC ^{\rs}  (X/k) $
then $M_{\alg} (V, \nabla, x)$ is Tannaka dual to the full Tannakian subcategory of 
$\MIC ^{ \rs}  (X/k)$ spanned by $(V, \nabla)$. This follows from the fact that a subobject and quotient of an object of   $\MIC ^{ \rs} (X/k)$ in  $\MIC (X/k)$ still lies in  $\MIC ^{\rs} (X/k)$.

The group $M_{\alg} (V, \nabla, x)$ is sometimes called the differential Galois group of $(V, \nabla)$ (see, e.g., \cite[IV]{Ka0}).  In case $k=\CC$ and $(V, \nabla)$ is a vector bundle with an integrable connection and regular singularities with monodromy representation  $\rho$ then $M_{\alg} (V, \nabla, x)$ coincides with
$M_{\rm geom} (V, \nabla, x)$  (see \cite[Proposition 5.1 and Proposition 5.2]{Ka0}).

Let us note the following folklore fact:

\begin{lemma}
Let $K\supset k$ be a field extension. Then the natural base change morphisms 
$$\pi _1 ^{\alg}(X\otimes _kK, x\otimes _kK) \to \pi _1 ^{\alg}(X, x)\otimes _kK$$
and 
$$\pi _1 ^{\alg , \rs}(X\otimes _kK, x\otimes _kK) \to \pi _1 ^{\alg , \rs}(X, x)\otimes _kK$$
are closed immersions.
\end{lemma}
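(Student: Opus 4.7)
By the Tannakian criterion for a morphism of affine group schemes to be a closed immersion (see \cite[II, Proposition 2.21 b)]{DMOS}), it suffices to prove that every object of $\MIC(X\otimes_k K/K)$ is isomorphic to a subquotient of an object in the essential image of the natural pullback functor
$$\Phi\colon \MIC(X/k)\otimes_k K \longrightarrow \MIC(X\otimes_k K/K).$$
Here $\MIC(X/k)\otimes_k K$ denotes the $K$-linear Tannakian base change of the $k$-Tannakian category $\MIC(X/k)$, which by Tannakian duality is equivalent to the category of $K$-linear representations of $\pi_1^{\alg}(X,x)\otimes_k K$, and $\Phi$ is the natural tensor functor extending the scalar-extension functor $(V,\nabla)\mapsto (V\otimes_k K,\nabla\otimes \id)$. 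The plan is to prove the stronger statement that $\Phi$ is essentially surjective, which trivially implies the required subquotient criterion.

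Given $(W,\nabla_W)\in \MIC(X\otimes_k K/K)$, write $K$ as the filtered union of its finitely generated $k$-subalgebras; after localization one may assume each such subalgebra $A\subset K$ is smooth over $k$. Since $(W,\nabla_W)$ is finitely presented, it descends to some $(W_A,\nabla_A)\in \MIC(X_A/A)$ with $X_A:=X\otimes_k A$ and $(W_A,\nabla_A)\otimes_A K\simeq (W,\nabla_W)$. A family of flat bundles parameterized by $\Spec A$ is precisely the data of an $A$-valued object of the Tannakian base change $\MIC(X/k)\otimes_k A$; base changing further along $A\to K$ yields an object of $\MIC(X/k)\otimes_k K$ whose image under $\Phi$ is $(W,\nabla_W)$, establishing essential surjectivity.

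For the regular-singular version one fixes a smooth compactification $\bar X\supset X$ over $k$ whose boundary is a strict normal crossings divisor; after base change this gives a compactification of $X\otimes_k K$. The condition that $(W_A,\nabla_A)$ has regular singularities along $\bar X_A\setminus X_A$ holds at the generic point of $\Spec A$ by assumption on $(W,\nabla_W)$, hence after further localization it holds on all of $\Spec A$. The same spreading-out argument then proves essential surjectivity of the analogous pullback functor $\MIC^{\rs}(X/k)\otimes_k K\to \MIC^{\rs}(X\otimes_k K/K)$, yielding the closed-immersion conclusion for $\pi_1^{\alg,\rs}$.

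The main technical obstacle is the identification of families $(W_A,\nabla_A)\in \MIC(X_A/A)$ with $A$-valued objects of the Tannakian base change $\MIC(X/k)\otimes_k A$; this requires some care with Deligne's formalism of base change for Tannakian categories. An alternative route that side-steps this formalism is to verify directly that the dual Hopf-algebra map $\cO(\pi_1^{\alg}(X,x))\otimes_k K\to \cO(\pi_1^{\alg}(X\otimes_k K,x\otimes_k K))$ is surjective, by producing for each matrix coefficient on the target a preimage coming from a spread-out family of flat bundles on $X/k$; either route confronts the same essentially bookkeeping-heavy descent argument.
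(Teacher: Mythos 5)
Your first step --- reducing via \cite[II, Proposition 2.21 b)]{DMOS} to showing that every object of $\MIC(X\otimes_kK/K)$ is a subquotient of an object in the essential image of the restriction functor $\Phi$ --- is exactly the paper's, but the way you verify this criterion contains a genuine gap. The assertion that ``a family of flat bundles parameterized by $\Spec A$ is precisely the data of an $A$-valued object of the Tannakian base change $\MIC(X/k)\otimes_kA$'' is false, and with it the claimed essential surjectivity of $\Phi$. A representation of $\pi_1^{\alg}(X,x)\otimes_kA$ on a finitely generated projective $A$-module is an $A$-linear object of ${\rm Ind}\,\MIC(X/k)$, hence (being finitely generated) a subobject of $(N\otimes_kA)^n$ for some single $N\in\MIC(X/k)$; unwinding, such objects are the (roughly) \emph{isomonodromic} families, not arbitrary relative connections. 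A general relative connection is not of this type: take $X=\GG_m$, $A=k[t]$ and the rank one relative connection $\nabla=d+t\,\frac{dx}{x}$. Its fibre over $K=k(t)$ has local exponent $t\notin\bar k$, whereas every object in the essential image of $\Phi$ is a subobject of some $(N\otimes_kK)^n$ with $N\in\MIC(X/k)$, whose rank one subquotients have exponents in $\bar k+\QQ$ (this is seen from the formal structure at $0$, which is insensitive to the base field extension). So this object is not even a \emph{subquotient} of anything in the essential image, let alone in the image itself. In particular your spreading-out argument cannot be repaired for transcendental extensions, and if $\Phi$ were essentially surjective the base change morphism would be close to an isomorphism, which the paper explicitly warns is not the case in general.

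The idea your proposal is missing is the \emph{finite pushforward}. The paper descends $M_K$ to a \emph{finite} subextension $k\subset L\subset K$ (which is possible because the lemma is applied with $K/k$ algebraic, namely $K=\bar k$ in Lemma \ref{surjectivity-after-extension}), lets $\varphi\colon X_L\to X$ be the induced finite morphism, and observes that $M_L$ is a subquotient of $\varphi^*\varphi_*M_L$, where $\varphi_*M_L$ is an honest object of $\MIC(X/k)$ precisely because $\varphi$ is finite. Hence $M_K\simeq M_L\otimes_LK$ is a subquotient of $(\varphi_*M_L)\otimes_kK$, which is exactly the subquotient statement that \cite[II, Proposition 2.21 b)]{DMOS} requires --- no essential surjectivity is needed or available. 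Your spreading-out over a finitely generated smooth $A$ never produces an object of $\MIC(X/k)$ of which $M_K$ is a subquotient, because $\varphi_*$ along a positive-dimensional base destroys coherence; and your ``alternative route'' via surjectivity of the Hopf algebra map founders on the same point, since the matrix coefficients you would need do not arise from representations of $\pi_1^{\alg}(X,x)\otimes_kA$ for the reason above.
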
 

\begin{proof}
	By \cite[II, Proposition 2.21 (b)]{DMOS} we need to show that every object $M_K$ of $\MIC  (X\otimes _k K/K)$ is isomorphic to a subquotient of an object coming from the representation of $\pi _1 ^{\alg}(X, x)\otimes _kK$. But  $M_K$ is defined over a finite extension $k\subset L$ for some $L\subset K$. Let $M_L$ denote the corresponding integrable connection with $M_K\simeq M_L\otimes _LK$ and let $\varphi : X_L\to X$ denote the base change map.	Then $M_L$ is a subquotient of $\varphi ^* \varphi_*M_L$, which comes from $X$. So $M$ is also isomorphic to a subquotient	of an integrable connection coming from $X$. The proof in the second case is analogous.
\end{proof}

In general, it is not true that the above base change morphisms are isomorphisms.
However, we have the following lemma:

\begin{lemma}\label{surjectivity-after-extension}
	Let $K$ be an algebraic closure of a field $k$. Let $x\in X(k)$ be a $k$-rational point and let $M=(V, \nabla) \in
	\MIC  (X/k)$. Then there exists a finite field extension $k\subset l$ for some $l\subset K$ such that
	the natural base change morphism 
		$$M_{\alg}(M\otimes _k K, x\otimes _k K) \to M _{\alg}(M\otimes _k l,  x\otimes _kl )\otimes _lK$$
is an isomorphism.
\end{lemma}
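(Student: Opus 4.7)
The plan is to use Chevalley's theorem to express $M_{\alg}(M\otimes_k K, x\otimes_k K)$ as the set-theoretic stabilizer of a line in a tensor representation, descend that line to a finite extension $l$ of $k$ inside $K$, and then verify that $M_{\alg}(M\otimes_k l, x\otimes_k l)\otimes_l K$ sits inside the same stabilizer. Combined with the closed immersion in the opposite direction supplied by the previous lemma (applied to the extension $l\subset K$), this will produce the claimed isomorphism.

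Write $W:=V\otimes_k K|_{x\otimes_k K}$, so that the ambient group is $\GL(W)$. For each finite $l/k$ in $K$ the previous lemma gives a closed immersion
$$M_{\alg}(M\otimes_k K,\, x\otimes_k K)\hookrightarrow M_{\alg}(M\otimes_k l,\, x\otimes_k l)\otimes_l K$$
inside $\GL(W)$, so only the reverse inclusion needs to be produced. Since we are in characteristic zero, $M_{\alg}(M\otimes_k K, x\otimes_k K)$ is a reduced closed subgroup scheme of finite type in $\GL(W)$, and Chevalley's theorem yields nonnegative integers $n,m$ together with a line $L\subset W^{\otimes n}\otimes (W^\vee)^{\otimes m}$ whose stabilizer $\{g\in\GL(W):gL=L\}$ equals $M_{\alg}(M\otimes_k K, x\otimes_k K)$. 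Via the fiber functor $\omega_{x\otimes_k K}$, Tannakian duality identifies the invariant subspace $L$ with an $\MIC(X\otimes_k K/K)$-subobject $\tilde L\subset (M\otimes_k K)^{\otimes n}\otimes ((M\otimes_k K)^\vee)^{\otimes m}$.

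The coherent sub-$\cO$-module $\tilde L$, together with its stability under the connection, is encoded by finitely many sections and relations whose coefficients involve only finitely many elements of $K$, all of which lie in some finite subextension $l/k$ of $K$. Standard spreading-out therefore provides a descent $\tilde L_l\subset (M\otimes_k l)^{\otimes n}\otimes ((M\otimes_k l)^\vee)^{\otimes m}$ in $\MIC(X\otimes_k l/l)$ with $\tilde L_l\otimes_l K=\tilde L$. Its fiber $\omega_{x\otimes_k l}(\tilde L_l)$ is automatically stabilized by $M_{\alg}(M\otimes_k l, x\otimes_k l)$, and since the set-theoretic stabilizer of a subspace commutes with base change along a field extension, one obtains
$$M_{\alg}(M\otimes_k l,\, x\otimes_k l)\otimes_l K\;\subset\;\{g\in\GL(W):gL=L\}\;=\;M_{\alg}(M\otimes_k K,\, x\otimes_k K),$$
which is the missing reverse inclusion. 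The main technical obstacle is the descent step for the $\MIC$-subobject $\tilde L$: one has to confirm that both the coherent subsheaf structure and the compatibility with the connection descend to a common finite extension of $k$. This is a routine spreading-out argument (using that $X$ is of finite type over $k$ and that $\tilde L$ is finitely presented together with the connection), but it is the one point that genuinely uses the rationality of the point and the finite-type hypothesis and should be stated carefully.
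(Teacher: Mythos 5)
Your argument is correct, but it takes a different route from the paper's. The paper also starts from the closed immersion of the previous lemma and reduces to proving faithful flatness of $M_{\alg}(M\otimes _k K, x\otimes _k K) \to M _{\alg}(M\otimes _k l, x\otimes _kl )\otimes _lK$ for suitable $l$; it then picks a finite dimensional \emph{faithful} representation of $M_{\alg}(M\otimes _k K, x\otimes _k K)$, realizes it as a subquotient of some $P(M\otimes_k K, M^{\vee}\otimes_k K)$ with $P\in\NN[t,s]$, descends that object to a finite subextension $l$, and invokes the criterion of \cite[II, Proposition 2.21 (a)]{DMOS} to conclude faithful flatness. You instead realize $M_{\alg}(M\otimes _k K, x\otimes _k K)$ as the stabilizer of a Chevalley line, descend the corresponding rank one $\MIC$-subobject to $l$, and close the loop with a direct two-sided inclusion of subgroups of $\GL(W)$. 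Both proofs turn on the same pivot -- the witness object (faithful representation, respectively the line) involves only finitely many elements of $K$ and hence descends to a finite subextension, finiteness using that $K/k$ is algebraic -- so the choice is largely one of packaging: the paper's version leans on the Tannakian faithful-flatness criterion and avoids Chevalley, while yours is more hands-on but must additionally justify that the Chevalley line can be placed inside a tensor construction on $W$. On that last point, note that Chevalley's representation is a priori only a direct summand of a sum $\bigoplus_i W^{\otimes n_i}\otimes (W^{\vee})^{\otimes m_i}$, so your line should be taken in such a direct sum (i.e., in $\omega_{x\otimes_k K}(P(M\otimes_k K, M^{\vee}\otimes_k K))$ for some $P\in\NN[t,s]$) rather than in a single $W^{\otimes n}\otimes (W^{\vee})^{\otimes m}$; this changes nothing in the argument, since a line in the direct sum still corresponds to a subobject of an object of $\langle M\otimes_k K\rangle$ and its stabilizer is unchanged under the equivariant embedding.
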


\begin{proof}
	By the previous lemma for any $k\subset l\subset K$ 
	$$M_{\alg}(M\otimes _k K, x\otimes _k K) \to M _{\alg}(M\otimes _k l,  x\otimes _kl )\otimes _lK$$
	is a closed immersion, so we need only to find $l$ for which this morphism is faithfully flat.
	
Since $M_{\alg}(M\otimes _k K, x\otimes _k K)$ is algebraic it has a finite dimensional faithful representation (see \cite[II, Corollary 2.5]{DMOS}), which corresponds to 
some element $M'\in \MIC  (X\otimes _k K /K)$. By definition $M'$ is an element of the full Tannakian subcategory of 
$\MIC   (X\otimes _k K/K)$ spanned by $M\otimes _k K$, so it is a subquotient of $P (M\otimes _k K, M^{\vee}\otimes _kK)$ for some $P\in \NN [t,s]$. But such $M'$ is defined over a finite extension $k\subset l$, i.e., there exists  
$M''\in \MIC  (X\otimes _k l/l)$ such that $M'\simeq M''\otimes _lK$. 
The category of rational finite dimensional representations of $M _{\alg}(M\otimes _k l,  x\otimes _kl )\otimes _lK$ is equivalent to the full subcategory $\cC$ of $\MIC  (X\otimes _k K /K)$, such that $N$ is an object of $\cC$ if and only if there exists $N'\in \langle M\otimes _k l \rangle$ such that $N$ is a subobject of $N'\otimes _lK$. 
Since $M'$ is an object of $\cC$, \cite[II, Proposition 2.21 (a)]{DMOS} implies that
$$M_{\alg}(M\otimes _k K, x\otimes _k K) \to M _{\alg}(M\otimes _k l,  x\otimes _kl )\otimes _lK$$
is faithfully flat.
\end{proof}

\begin{remark}
In fact, by \cite[Proposition 1.3.2]{Ka1} the above lemma holds for $l=k$. However, Lemma \ref{surjectivity-after-extension} is sufficient for our applications.
\end{remark}

Let us also recall the following Gabber's theorem (see \cite[Specialization Theorem 2.4.1]{Ka}):

\begin{theorem} \label{Gabber}
Let $k$ be a field of characteristic zero and let $R=k \llbracket t \rrbracket$, $S= \Spec R$.
Let $X/S$ be a smooth separated $R$-scheme of finite type  with geometrically connected fibers and let $x\in X(S)$.
Let  $(V, \nabla)$ be a vector bundle on $X$ with a relative (over $S$) integrable connection $\nabla$. Let $\eta $ be the generic point of $S$ and $s$ the special point of $S$. Let $G/ S$ be the closed $S$-flat subgroup scheme of $\GL (x^*V)$ obtained as the schematic closure of $M_{\alg} (V _{\eta}, \nabla _{\eta}, x (\eta) )$. Then 
$$M_{\alg} (V _{s}, \nabla _{s}, x (s) ) \subset G_s .$$
\end{theorem}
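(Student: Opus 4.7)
The proof is Tannakian in character. The strategy is to realise the $R$-flat group scheme $G$ as an intersection of stabilisers of sub-$R$-modules in tensor constructions on $x^*V$, to extend these submodules to $\MIC$-subobjects over all of $X/S$ via scheme-theoretic closure of their generic fibres, and then to exploit that on the special fibre such subobjects are automatically stabilised by $M_{\alg}(V_s,\nabla_s,x(s))$.

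First I would invoke a Chevalley-type theorem for $R$-flat closed subgroup schemes of $\GL_R(x^*V)$ (as in SGA~3) to write
$$G \;=\; \bigcap_\alpha \mathrm{Stab}(L_\alpha),$$
where each $L_\alpha \subset x^*T^{m_\alpha,n_\alpha}$ is an $R$-flat (in fact rank-one) sub-$R$-module, $T^{m,n} := V^{\otimes m} \otimes (V^\vee)^{\otimes n}$. Since everything is Noetherian and $G$ is $R$-flat, the formation of this intersection commutes with the base change $R \to k$, so $G_s = \bigcap_\alpha \mathrm{Stab}(L_{\alpha,s})$ inside $\GL(x_s^*V_s)$.

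For each $\alpha$, the generic-fibre submodule $L_{\alpha,\eta}$ is stable under $G_\eta = M_{\alg}(V_\eta,\nabla_\eta,x(\eta))$. By Tannaka duality applied to the Tannakian subcategory $\langle V_\eta\rangle^{\otimes} \subset \MIC(X_\eta/K)$ with fibre functor $\omega_{x(\eta)}$ (where $K = \mathrm{Frac}(R)$), this sub-vector space corresponds to a $\MIC$-subobject $\mathcal{L}_{\alpha,\eta} \subset T^{m_\alpha,n_\alpha}_\eta$ whose fibre at $x(\eta)$ is $L_{\alpha,\eta}$. Let $\mathcal{L}_\alpha \subset T^{m_\alpha,n_\alpha}$ be the scheme-theoretic closure of $\mathcal{L}_{\alpha,\eta}$; this is $R$-flat, and as $\nabla$-stability can be tested on the schematically dense open $X_\eta$, the subsheaf $\mathcal{L}_\alpha$ is preserved by the relative connection. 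Restricting to $s$ yields a $\nabla_s$-stable coherent subsheaf $\mathcal{L}_{\alpha,s} \subset T^{m_\alpha,n_\alpha}_s$; its saturation is a bona fide $\MIC$-subobject in $\MIC(X_s/k)$, and up to arranging $L_\alpha$ itself to be saturated in $x^*T^{m_\alpha,n_\alpha}$ from the start, the fibre of this saturation at $x(s)$ recovers $L_{\alpha,s}$.

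By definition $M_{\alg}(V_s,\nabla_s,x(s))$ is the image of $\pi_1^{\alg}(X_s,x(s))$ in $\GL(x_s^*V_s)$, and so it stabilises the fibre at $x(s)$ of every $\MIC$-subobject of every tensor construction on $V_s$. Applying this to each $\mathcal{L}_{\alpha,s}$ yields
$$M_{\alg}(V_s,\nabla_s,x(s)) \;\subset\; \bigcap_\alpha \mathrm{Stab}(L_{\alpha,s}) \;=\; G_s,$$
which is the desired inclusion. The main obstacle is the delicate interplay between $R$-flatness, scheme-theoretic closure and saturation: one must verify that the $R$-flat closure $\mathcal{L}_\alpha$ specialises to something whose fibre at $x(s)$ is exactly $L_{\alpha,s}$ (and not strictly smaller), and that the Chevalley-type realisation of $G$ over the DVR $R$ indeed produces the $L_\alpha$ in a form compatible with this extension procedure. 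Everything else is essentially formal Tannakian yoga together with the standard fact that $R$-flat closed subschemes of a Noetherian $R$-scheme are determined by their generic fibre.
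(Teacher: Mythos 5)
The paper does not actually prove this statement: it is quoted from Katz \cite{Ka}, Specialization Theorem 2.4.1, so the only thing to compare your proposal with is Katz's own argument --- and your outline reconstructs essentially that argument (an integral Chevalley presentation of $G$ by stabilizers, the Tannakian identification of $G_\eta$-stable subspaces with horizontal subsheaves on $X_\eta$, horizontal extension by closure over $X/S$, and specialization to $X_s$). The outline is correct.

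One step deserves sharpening, because the entire weight of the proof rests on it and the justification you give is not the operative one. Since $G$ is the schematic closure of $G_\eta$, \emph{any} closed subscheme of $\GL_R(x^*V)$ containing $G_\eta$ contains $G$; so a presentation $G_\eta=\bigcap_\alpha\mathrm{Stab}(L_{\alpha,\eta})$ that is only known generically would give $G_s\subseteq\bigcap_\alpha\mathrm{Stab}(L_{\alpha,s})$ --- an upper bound on $G_s$, i.e.\ the wrong direction for the theorem. What you need, and do assert, is the scheme-theoretic equality $G=\bigcap_\alpha\mathrm{Stab}(L_\alpha)$ over $R$ itself, with each $L_\alpha$ a free \emph{direct summand} of $x^*T^{m_\alpha,n_\alpha}$ (not merely $R$-flat: for $L=t^nL^{\mathrm{sat}}$ with $n\ge 1$ the transporter does not behave under reduction mod $t$). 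Once that is granted, the base-change compatibility is automatic --- scheme-theoretic intersections always commute with base change because ideals add, and the stabilizer of a direct summand is cut out by explicit matrix entries --- so neither Noetherianness nor the flatness of $G$ is the relevant input; the relevant input is Chevalley's theorem over a Dedekind base (it produces a single free rank-one direct summand $L$ in a tensor construction with $G=\mathrm{Stab}(L)$ and $G_s=\mathrm{Stab}(L_s)$), and this is a genuine theorem requiring its own proof, not a formal consequence of flatness. On the other end of the argument, the saturation worry you raise at $x(s)$ dissolves for a clean reason worth recording: taking $\cL_\alpha$ to be the $R$-saturated horizontal closure, the quotient $T^{m_\alpha,n_\alpha}_s/(\cL_\alpha\otimes k)$ inherits an integrable connection and is therefore locally free (characteristic zero), so $\cL_\alpha\otimes k$ is already a subbundle of $T^{m_\alpha,n_\alpha}_s$ and its fibre at $x(s)$ is exactly $L_{\alpha,s}$, with no further saturation needed.
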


\section{Flat bundles on complex quasi-projective varieties}

Let $X$ be a smooth complex quasi-projective variety with a fixed basepoint $x\in X(\CC)$ and 
let $\rho: \pi_1 (X^{\an},x)\to \GL(r, \CC)$ be a representation. Then we can define Chern classes of the representation $\rho$  by the formula $c_i (\rho):=c_i (V_{\rho})\in \CH^i (X)$.

Let $(V, \nabla)$ be a vector bundle with an integrable connection on $X$.
Let us recall that the Chern class  $c_i(V)$ is mapped under the cycle class map  $\CH ^i (X ) \to H^{2i}_{\cD} (X, \ZZ (i))$ to the Chern class $c_i ^{D}(V)$ of $V$ in the Deligne-Beilinson cohomology.
Cheeger and Simons defined also characteristic classes 
$$\hat c _i (V, \nabla)\in H^{2i-1} (X, \CC/\ZZ (i)).$$
These classes are mapped to $c_i ^{D}(V)$ under the natural homomorphism
$$H^{2i-1} (X, \CC/\ZZ (i))\to H^{2i}_{\cD} (X, \ZZ (i))$$
(see \cite[Theorem 1]{DHZ}). Let us also recall that one can define characteristic classes $c _i ^{\an} (V, \nabla)\in H^{2i-1} (X, \CC/\ZZ (i))$ by universality (see \cite[3.4]{Es1} for precise references). These classes are very similar to $\hat c _i (V, \nabla)$ but it seems to be still unknown if they coincide in general (equality of these classes is equivalent to \cite[Conjecture 4]{DHZ}). Note that unlike the Chern classes in the Chow group, the classes
$\hat c _i (V, \nabla)$, $c _i ^{\an} (V, \nabla)$ and $c_i ^{D}(V)$ do not depend on the algebraic structure of  $(V, \nabla)$ but only on its analytic structure $(V^{\an }, \nabla ^{\an })$.

\medskip

\subsection{Chern classes of rigid representations}

\begin{proposition} \label{rigid}
	Let  $\rho: \pi_1 (X^{\an},x)\to \GL(r, \CC)$ be a properly rigid representation with quasi-unipotent monodromy at infinity. Then $\hat c_i(V_{\rho},\nabla_{\rho})$ and $c _i ^{\an} (V_{\rho},\nabla_{\rho})$ vanish
	in $H^{2i-1} (X^{\an}, \CC /\QQ)$ for all $i\ge 1$. 
\end{proposition}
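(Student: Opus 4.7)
The plan is to combine Theorem \ref{rigid-VHS} with Simpson's nonabelian Hodge correspondence and the $\CC^{\ast}$-scaling action: reducing $\rho$ through a deformation to a unitary limit where the vanishing of the secondary classes modulo $\QQ$ is classical.

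By Theorem \ref{rigid-VHS}, $\rho$ underlies a complex variation of Hodge structures. In the tame/parabolic version of Simpson's correspondence (after Mochizuki, Biquard, and Jost--Zuo), valid for quasi-projective $X$ with quasi-unipotent monodromy at infinity, $(V_\rho, \nabla_\rho)$ corresponds to a polystable system of Hodge bundles $(E=\bigoplus_p E^p,\theta)$ with $\theta$ shifting the grading by $-1$ and having logarithmic poles along the boundary divisor of a smooth compactification. This system of Hodge bundles is a fixed point of Simpson's $\CC^{\ast}$-scaling in the appropriate sense.

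Next, I would invoke the Simpson $\CC^{\ast}$-family $(E, t\theta)$ for $t\in\CC^{\ast}$: each rescaled Higgs bundle corresponds via the nonabelian Hodge correspondence to a flat bundle $(V_t,\nabla_t)$, and because $(E,\theta)$ is a system of Hodge bundles, the grading automorphism $\bigoplus t^p\cdot \id_{E^p}$ identifies $(E, t\theta)$ with $(E,\theta)$ as Higgs bundles. This yields $(V_t,\nabla_t)\simeq (V_\rho,\nabla_\rho)$ as flat bundles for every $t\in\CC^{\ast}$, so the secondary classes satisfy $\hat c_i(V_t,\nabla_t)=\hat c_i(V_\rho,\nabla_\rho)$ and $c_i^{\an}(V_t,\nabla_t)=c_i^{\an}(V_\rho,\nabla_\rho)$ for all $t$. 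Continuity of these classes in $t$, as maps into the Hausdorff topological group $H^{2i-1}(X^{\an},\CC/\QQ(i))$, lets one pass to the limit at $t=0$, where the Higgs bundle becomes $(E,0)$; by Donaldson--Uhlenbeck--Yau (or its tame analogue) this corresponds to a direct sum of unitary flat bundles $(V_0,\nabla_0)$.

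It therefore suffices to show that $\hat c_i$ and $c_i^{\an}$ vanish in $H^{2i-1}(X^{\an},\CC/\QQ(i))$ for this unitary limit. This is the classical Reznikov-type argument: for a unitary flat connection the Chern--Simons form is purely imaginary while the rational Chern classes vanish, so the reduction modulo $\QQ$ is zero. I expect the main obstacle to lie in this last step in the quasi-projective setting: one must check that the tame/parabolic version of the nonabelian Hodge correspondence is compatible with $\hat c_i$ and $c_i^{\an}$ so that the continuity in $t$ survives the limit, and one must extend Reznikov's vanishing to the parabolic unitary setting, ensuring that the fixed conjugacy classes at infinity (which are preserved throughout the $\CC^{\ast}$-family precisely because of the quasi-unipotency hypothesis) do not contribute an obstruction.
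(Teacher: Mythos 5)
There is a genuine gap, and it sits exactly where you place the weight of the argument: the passage to a ``unitary limit'' via the $\CC^{*}$-action does not exist. You correctly observe that, because $(E,\theta)$ is a system of Hodge bundles, the grading automorphism identifies $(E,t\theta)$ with $(E,\theta)$ for every $t\in\CC^{*}$. But this means the family $t\mapsto [(E,t\theta)]$ is \emph{constant} in the moduli space, so its limit as $t\to 0$ is the fixed point $[(E,\theta)]$ itself --- not $[(E,0)]$. Setting $t=0$ in the literal pair $(E,t\theta)$ jumps out of the isomorphism class (indeed $(E,0)$ need not even be semistable as a Higgs bundle with zero field, since the underlying bundle $E$ of a stable Higgs bundle need not be a semistable vector bundle), and the nonabelian Hodge correspondence transports isomorphism classes, not literal pairs. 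So no deformation of $(V_\rho,\nabla_\rho)$ to a unitary flat bundle is produced, and the continuity argument has nothing to act on. Even granting such a deformation, the endgame would fail for $i=1$: for a unitary flat line bundle $\hat c_1$ is the monodromy character, which is not torsion in general, whereas the proposition asserts vanishing in $\CC/\QQ$ for all $i\ge 1$; the $i=1$ case genuinely uses rigidity (it amounts to the determinant of a rigid local system being of finite order).

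The missing idea is the arithmetic one, and it is the entire content of the paper's proof. Rigidity implies that $\rho$ descends to a representation $\rho_K$ over a number field $K$ (traces take finitely many values among embeddings of the field of definition, hence are algebraic). Every Galois conjugate $\sigma\circ\rho_K$, $\sigma:K\to\CC$, is again properly rigid with quasi-unipotent monodromy at infinity, hence by Theorem \ref{rigid-VHS} (together with \cite[Lemma 6.6]{CS}) underlies a polarized complex variation of Hodge structure. The vanishing of $\hat c_i$ and $c_i^{\an}$ in $H^{2i-1}(X^{\an},\CC/\QQ)$ for all $i\ge 1$ is then exactly the Corlette--Esnault theorem \cite[Theorems A.1 and A.4]{CE}, whose hypothesis is precisely ``defined over a number field and all conjugates underlie $\CC$-VHS.'' A single $\CC$-VHS structure on $\rho$ alone is not known to imply the conclusion in the quasi-projective, arbitrary-rank setting (Reznikov's theorem is not available there, and in any case says nothing about $i=1$), so your first step, while correct, cannot carry the proof by itself.
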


\begin{proof}
	Since $\rho$ is rigid, we can find an algebraic number field $K$, an embedding $\eta : K\to \CC$ 
	and a representation  $\rho _K: \pi_1 (X^{\an},x)\to \GL(r, K)$ such that $\rho _K\otimes _{K, \eta } \CC= \rho$.
	The idea behind this fact is that since $ \pi_1 (X^{\an},x)$ is finitely generated, we can find a subfield $L$ of $\CC$
	that is finitely generated over $\QQ$ such that $\rho$ is defined over $L$. Considering various embeddings of $L$ into $\CC$ 
	we get a family of representations containing $\rho$. Now rigidity of $\rho$ implies that for every $\gamma \in \pi_1 (X^{\an},x) $ traces of images of $\gamma$ take only finitely many values. This easily shows that $\rho (\gamma)$ is algebraic, so we can find the required field $K$.
	
	By Theorem \ref{rigid-VHS} and \cite[Lemma 6.6]{CS}	for any $\sigma : K\to \CC$ the induced local system $V_{\sigma \circ \rho_K }$ underlies a polarized complex variation of Hodge structure. Therefore the required assertions follow from \cite[Theorem A.1]{CE} and \cite[Theorem A.4]{CE}.
\end{proof}

Note that the above proposition implies that any  rigid representation of rank $1$  with quasi-unipotent monodromy at infinity is of finite order (cf. \cite[Lemma 7.5]{LS}).

The above result motivates the following conjecture:

\begin{conjecture}\label{rigid-conjecture}
	If $\rho: \pi_1 (X^{\an},x)\to \GL(r, \CC)$ is a properly rigid representation with quasi-unipotent monodromy at infinity then the associated algebraic flat vector bundle $(V_{\rho}, \nabla _{\rho} )$ has torsion $c_i (V_{\rho})$ in $\CH ^i (X)$ for all $i\ge 1$.
\end{conjecture}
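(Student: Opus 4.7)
The plan is to combine Proposition \ref{rigid} with the Bloch--Beilinson conjecture applied to an arithmetic spreading out of the pair $(X, V_{\rho})$. The first observation is that Proposition \ref{rigid} gives vanishing of $\hat c_i(V_{\rho},\nabla_{\rho})$ in $H^{2i-1}(X^{\an},\CC/\QQ)$, hence torsion of $\hat c_i(V_{\rho},\nabla_{\rho})$ in $H^{2i-1}(X^{\an},\CC/\ZZ(i))$. Pushing along the natural map $H^{2i-1}(X^{\an},\CC/\ZZ(i))\to H^{2i}_{\cD}(X,\ZZ(i))$ shows that the Deligne--Beilinson Chern class $c_i^D(V_{\rho})$ is torsion. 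This reduces the problem to Bloch--Beilinson-type injectivity of the cycle class map into Deligne cohomology.

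Next, following the argument in the proof of Proposition \ref{rigid}, proper rigidity of $\rho$ together with quasi-unipotent monodromy at infinity forces the traces of $\rho(\gamma)$ to take only finitely many Galois conjugates, and so $\rho$ is already defined over a number field $K$: there exist an embedding $\eta:K\hookrightarrow\CC$ and $\rho_K:\pi_1(X^{\an},x)\to\GL(r,K)$ with $\rho\simeq\rho_K\otimes_{K,\eta}\CC$. I would then spread $(\bar X, D, x)$ out to a smooth proper model $(\bar{\mathcal X}, \mathcal D, \tilde x)$ over some smooth finitely generated $\eta(K)$-subalgebra $R\subset\CC$ with $\mathcal D$ a relative simple normal crossing divisor, set $\mathcal X:=\bar{\mathcal X}\setminus\mathcal D$, and use Lemma \ref{functorial-RH} together with Deligne's Riemann--Hilbert correspondence to build a relative flat bundle $(\mathcal V,\mathcal \nabla)$ on $\mathcal X/\Spec R$ extending $(V_{\rho},\nabla_{\rho})$, whose restriction to any geometric fiber is identified, via the canonical isomorphism of topological fundamental groups, with $(V_{\rho},\nabla_{\rho})$.

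Pick then a closed point $s_0\in\Spec R$ whose residue field $K_0$ is a number field. The fiber $\mathcal X_{s_0}$ is a smooth quasi-projective variety over $K_0$ with compactification $\bar{\mathcal X}_{s_0}$, and the flat bundle $(\mathcal V_{s_0},\mathcal \nabla_{s_0})$ remains properly rigid with quasi-unipotent monodromy at infinity. Hence the argument of the first paragraph applies on this fiber as well and gives torsion of $c_i^D(\mathcal V_{s_0})$; since $\mathcal X_{s_0}$ is defined over the number field $K_0$, the Bloch--Beilinson conjecture for $\mathcal X_{s_0}$ now yields torsion of $c_i(\mathcal V_{s_0})$ in $\CH^i(\mathcal X_{s_0})$.

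The hard part is the final step: transferring torsion of $c_i(\mathcal V_{s_0})\in\CH^i(\mathcal X_{s_0})$ back to $c_i(V_{\rho})\in\CH^i(X)$. Both classes arise by pullback from the single total class $c_i(\mathcal V)\in\CH^i(\mathcal X)$, but specialization homomorphisms in Chow groups go from generic to special fibers, so a direct specialization argument only produces torsion on fibers more special than $\mathcal X_{s_0}$, not on the generic $\CC$-fiber $X$ at all. Overcoming this seems to require either a strengthening of Bloch--Beilinson sufficient to promote torsion of $c_i$ on one arithmetic fiber to torsion of $c_i(\mathcal V)$ in the total space (for example, a version of the conjecture over the finitely generated field $\mathrm{Frac}\,R$ or after base change to $\bar\QQ$), or an independent argument exploiting the essential constancy of $(\mathcal V,\mathcal \nabla)$ in the family to identify the relevant Chow classes across fibers.
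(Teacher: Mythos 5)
The first thing to note is that the statement you are trying to prove is labelled a \emph{conjecture} in the paper (Conjecture \ref{rigid-conjecture}); the paper offers no proof of it, only the remark that it is motivated by Proposition \ref{rigid} and that it would follow from Simpson's conjecture that properly rigid local systems are of geometric origin combined with a strengthened form of Esnault's conjecture on Chern classes of Gauss--Manin connections. So there is no proof in the paper against which your argument can be matched, and any complete argument here would be a genuinely new theorem.

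Your proposal is, as you yourself acknowledge in the final paragraph, not a proof, and the gaps are fatal. First, you invoke the Bloch--Beilinson conjecture on injectivity of the cycle class maps into Deligne cohomology for varieties over number fields; this is itself wide open, so at best you would obtain a conditional statement (which is how the paper treats the analogous rank~$2$ and rank~$3$ results, Propositions \ref{rk2-conj-torsion} and \ref{rk3-conj-torsion}, both explicitly conditional on Conjecture \ref{Shimura-conj}). Second, and more seriously, the transfer step fails for exactly the reason you identify: $X$ is an arbitrary smooth complex quasi-projective variety, not one defined over a number field, and specialization homomorphisms on Chow groups go from the generic fibre to special fibres, so torsion of $c_i$ on an arithmetic fibre $\mathcal{X}_{s_0}$ gives no information about $c_i(V_\rho)$ in $\CH^i(X)$. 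It is instructive to compare with how the paper's conditional results get around this: in rank~$2$ the rigid Lie-irreducible case is handled by producing, via \cite[Theorem 11.2]{CS}, a map $f\colon X\to \cM$ to a polydisk Shimura modular stack that \emph{is} defined over a number field, applying the Bloch--Beilinson conjecture on $\cM$ to kill the relevant classes in $\CH^i(\cM)_{\QQ}$, and then pulling back along $f$ --- pullback of Chow classes is unproblematic, whereas specialization in the direction you need is not. Without such a factorization through an arithmetically defined target (which is precisely what ``of geometric origin'' would supply in general rank, and which is unknown), your strategy cannot close.
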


Taking a finite covering we can reduce 
the study of  representations of $\pi_1 (X^{\an},x)$ with quasi-unipotent monodromy to the unipotent case
(however, by passing to a finite covering the representation induced from properly rigid need not be properly rigid).
In this case  there is a stronger version of the above conjecture. Namely, 
let us assume that $\rho$  has unipotent monodromy at infinity and
fix a projective completion $X\subset \bar X$ such that $D=\bar X-X$ is a simple normal crossing divisor on $\bar X$. 
Then there exists a canonical Deligne's extension of  $(V_{\rho}, \nabla _{\rho} )$  to a vector bundle $\bar V _{\rho}$ with a logarithmic 
connection $\bar \nabla  _{\rho}: \bar V _{\rho}\to \bar V _{\rho}\otimes \Omega_{\bar X} (\log D)$ such that the residues of $\bar \nabla _{\rho}$ along the irreducible components of $D$ are nilpotent. 
Then  the eigenvalues of the residues of $\bar \nabla _{\rho}$ are equal to $0$ and
a standard computation of Chern classes in the de Rham cohomology of $\bar X$ shows that $c_i^{DR} (\bar V _{\rho})=0$ for $i\ge 1$.

Summing up, one can expect that if $\rho: \pi_1 (X^{\an},x)\to \GL(r, \CC)$ is a properly rigid representation with unipotent monodromy 
at infinity then the Chern classes  $c_i (\bar V _{\rho})$ are also torsion in $\CH ^i (\bar X)$.

\subsection{Chern classes of general rank $2$ representations}

Conjecture \ref{rigid-conjecture} would follow from Simpson's conjecture saying that (properly) rigid local systems are of geometric origin and the following strenghtening of Esnault's conjecture on Chern classes of Gauss--Manin connections:
 
\begin{conjecture}
	If a local system $\VV$ is of geometric origin, then the associated algebraic flat vector bundle has torsion Chern classes in the Chow ring.
\end{conjecture}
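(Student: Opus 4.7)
The plan is to reduce this conjecture to (a strengthening of) Esnault's conjecture on Chern classes of Gauss--Manin connections, which is the combinatorial heart of the matter, and then to indicate what further input seems necessary.

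First, I would unpack ``of geometric origin.'' By definition this means that, possibly after restricting to a Zariski open of $X$ and pulling back along a finite \'etale cover, $\VV$ is a subquotient of $R^if_\ast \QQ_Y$ for some smooth projective morphism $f:Y\to X$. Via the Riemann--Hilbert correspondence and Lemma \ref{functorial-RH}, the associated algebraic flat bundle $(V_{\VV}, \nabla_{\VV})$ is a subquotient in $\MIC^{\rs}(X/\CC)$ of the Gauss--Manin module $(H^i_{DR}(Y/X), \nabla_{GM})$. Since the total Chern class is multiplicative in short exact sequences, $c(V) = c(V')\,c(V'')$, one reduces inductively, along a composition series and the associated finite covers (whose pushforward has finite degree on $\CH^*$), to showing that the Chern classes of Gauss--Manin bundles themselves are torsion in $\CH^*(X)$.

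Second, for $(H^i_{DR}(Y/X), \nabla_{GM})$ I would use its underlying polarizable variation of Hodge structure. The Hodge filtration is an algebraic filtration of $V$ whose associated graded is $\gr V = \bigoplus_{p+q=i} R^qf_\ast \Omega^p_{Y/X}$, so additivity gives $c_k(V) = c_k(\gr V)$ in $\CH^*(X)$. Grothendieck--Riemann--Roch then yields
\[
\ch(\gr V) = f_\ast\!\left(\ch(\Omega^\bullet_{Y/X})\cdot \td(T_{Y/X})\right)\in \CH^*(X)_{\QQ},
\]
which, after compactifying and resolving, expresses the Chern character as a pushforward of tautological classes from a smooth projective total space. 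This should be combined with the fact that $\gr V$, with its Higgs field coming from Kodaira--Spencer, is the Higgs bundle of a complex VHS, so that one can apply the Higgs-side analogue (cf.\ the equivalence discussed before Proposition \ref{equivalence}) and exploit the parallel transport supplied by the polarization.

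The main obstacle is exactly that GRR controls only the Chern character; extracting torsion of the individual $c_k(V)$ in $\CH^*(X)$ (rather than merely in cohomology, where Reznikov suffices) is essentially the content of Esnault's conjecture and is open even for the full Gauss--Manin connection. A complementary line I would pursue is arithmetic: spread $f:Y\to X$ over a finitely generated subring of $\CC$, reduce modulo many primes $p$, and apply Theorem \ref{main3} to conclude that on each suitable mod $p$ fiber the Chern classes are torsion in $\CH^i(X_p)_\QQ$. The genuine difficulty is that torsion of specializations modulo infinitely many $p$ does not a priori imply torsion of the generic class in characteristic zero; bridging this gap appears to require Bloch--Beilinson type input controlling the kernel of $\CH^i(X)_\QQ \to \prod_p \CH^i(X_p)_\QQ$, which is precisely the kind of deep conjecture the paper relies on elsewhere.
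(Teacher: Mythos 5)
The statement you are addressing is not a theorem of the paper: it is stated there as a conjecture (a strengthening of Esnault's conjecture on Chern classes of Gauss--Manin connections), and the paper offers no proof of it. The only accompanying observation is that $\det\VV$ is again of geometric origin and hence of finite order by \cite[Proposition 2.10]{LS}, so that $c_1$ is torsion and only the classes $c_i$ for $i\ge 2$ are genuinely at stake. Your text is accordingly a strategy sketch rather than a proof --- you say as much yourself in the final paragraph --- and as it stands it does not establish the statement.

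Beyond that, two of your reduction steps contain concrete gaps. First, the passage ``along a composition series'' to ``the Gauss--Manin bundles themselves'' goes in the wrong direction: multiplicativity of the total Chern class gives $c(W)=\prod_j c(\gr_j W)$ for a filtered bundle $W$, so knowing that the Chern classes of $H^i_{DR}(Y/X)$ are torsion tells you nothing about the Chern classes of an individual irreducible subquotient; one would need the torsion statement for \emph{every} subquotient, which is exactly the general case of the conjecture, not a reduction of it. Second, Grothendieck--Riemann--Roch applied to the relative de Rham complex computes $\sum_n(-1)^n\ch\bigl(\gr H^n_{DR}(Y/X)\bigr)$, the alternating sum over all cohomological degrees, not $\ch$ of the single graded module $\gr H^i_{DR}(Y/X)$ as in your display; and even for that virtual class the pushforward $f_*\bigl(\ch(\Omega^\bullet_{Y/X})\cdot\td(T_{Y/X})\bigr)$ is merely some cycle class on $X$ --- GRR gives no reason for it to be torsion, which is precisely the open content of Esnault's conjecture. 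Finally, as you correctly note and as the paper itself discusses in Subsection \ref{reductions?}, the reduction-mod-$p$ route founders on the open question of whether a class whose specializations are torsion at almost all closed points must be torsion (or lie in the bottom stratum of the Bloch--Beilinson filtration) in characteristic zero; the paper's example of a non-torsion $L\in\Pic^0(X)$ whose reductions are all torsion shows the naive version fails. So the proposal neither proves the conjecture nor parallels an argument in the paper, because no such argument exists there.
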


Note that if $\VV$ is of geometric origin then  $\det \VV$ is a direct summand of the tensor product $\VV\otimes ...\otimes \VV$
of $\rk \VV$ copies of $\VV$. So by \cite[Proposition 2.10]{LS}  $\det \VV$ is of geometric origin. Since it has rank $1$ 
by [ibid.] it is also of finite order and hence the first Chern class of the algebraic flat vector bundle associated 
to $\VV$ is torsion. So in the above conjecture it is sufficient to consider only higher Chern classes.

\medskip

The following theorem is a rank $2$ analogue of Reznikov's theorem on Chern classes of flat bundles on smooth projective varieties (see \cite[Theorem 1.1  and 2.7]{Re}). Note that it is somewhat ``perpendicular'' to the results of Iyer and Simpson 
\cite[Proposition 1.2]{IS} that use parabolic bundles to deal with representations that are of finite order at infinity.

\begin{theorem}\label{Reznikov-analogue-rk2}
	Let  $\rho: \pi_1 (X^{\an},x)\to \GL(2, \CC)$ be a representation with quasi-unipotent monodromy at infinity
	and  let $(V , \nabla )$ be the associated  algebraic flat vector bundle. Then  $\hat c _i (V, \nabla)$ and $c_{i}^{\an}(V , \nabla )$ are torsion in  $H ^{2i-1}(X^{\an}, \CC/\ZZ (i))$ for all $i\ge 2$.
	In particular, $c_2 ^{D} (V)$ is torsion in $H^{4}_{\cD} (X, \ZZ (2))$.
\end{theorem}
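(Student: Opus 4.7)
Since $V$ has rank $2$, the Chern classes $c_i(V)$ vanish for $i\ge 3$, so the only content lies in the case $i=2$; the final assertion about $c_2^D$ then follows by applying the natural map $H^3(X^{\an},\CC/\ZZ(2))\to H^4_{\cD}(X,\ZZ(2))$. My first step is to pass to a finite étale cover $f\colon Y \to X$ on which $f^*\rho$ has \emph{unipotent} (not merely quasi-unipotent) monodromy at infinity; since $\hat c_i$ and $c_i^{\an}$ commute with pull-back and $f_* f^*$ is multiplication by $\deg(f)$ on cohomology, torsion on $Y$ implies torsion on $X$. I therefore reduce to the case where $\rho$ itself has unipotent monodromy at infinity, and split into cases according to whether $\rho$ is Lie irreducible.

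In the \emph{non-Lie-irreducible} case, Lemma \ref{Lie-irred-lemma} provides a further finite étale cover on which $\rho$ becomes reducible. Replacing $\rho$ by its semisimplification (which does not alter $\hat c_i$ and $c_i^{\an}$ modulo torsion, as these descend to well-defined functions on the Betti moduli space and $\rho$ and $\rho^{ss}$ define the same point there), one can write $V \simeq L_1\oplus L_2$ with each $L_i$ a flat line bundle having unipotent monodromy at infinity. Each $L_i$ then extends to a genuine flat line bundle $\bar L_i$ on a smooth projective compactification $\bar Y$, so $\bar V = \bar L_1\oplus \bar L_2$ is a flat bundle on a smooth complex projective variety. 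Applying Reznikov's theorem \cite{Re} to $\bar V$ yields that $\hat c_i(\bar V,\bar\nabla)$ and $c_i^{\an}(\bar V,\bar\nabla)$ are torsion in $H^{2i-1}(\bar Y^{\an},\CC/\ZZ(i))$ for $i\ge 2$; restricting along $Y^{\an}\hookrightarrow \bar Y^{\an}$ then gives the desired torsion on $Y$, and hence on $X$.

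In the \emph{Lie irreducible} case, $M_{\rm geom}(\rho)^0\supseteq\SL(2,\CC)$, so $M_{\rm geom}(\rho)$ is reductive. If $\rho$ is moreover properly rigid, Proposition \ref{rigid} gives the conclusion immediately. Otherwise $\rho$ lies in a positive-dimensional irreducible component of $M_B(X,\GL(2),\{C_i\})$ of representations with fixed (closures of) conjugacy classes $C_i$ at infinity, and the strategy is to deform $\rho$ within this component to a properly rigid representation---for instance, to a complex variation of Hodge structure produced by an appropriate extension of \cite[Theorem 8.1]{CS} to the quasi-projective rank-$2$ setting---and conclude via the invariance of $\hat c_2$ and $c_2^{\an}$ modulo torsion along such families.

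The principal obstacle is this last deformation step. One must either verify directly, via a variational/transgression argument, that $\hat c_2$ and $c_2^{\an}$ are locally constant modulo torsion on connected components of $M_B(X,\GL(2),\{C_i\})$ (taking advantage of the fact that the underlying $C^{\infty}$ bundle and the conjugacy classes at infinity are preserved along the deformation), or construct a properly rigid point in every irreducible component of $M_B(X,\GL(2),\{C_i\})$ that meets the Zariski-dense locus---a quasi-projective analogue of Corlette--Simpson uniformization for rank $2$ complex variations of Hodge structure with quasi-unipotent monodromy at infinity. Either route requires significant technical input beyond what is already assembled in the paper, and is the genuinely delicate part of the argument; the non-Lie-irreducible case, by contrast, is a clean reduction to Reznikov's projective theorem.
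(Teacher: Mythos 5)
Your reduction to $i=2$, the passage to $\SL(2)$-type situations, and the treatment of the non-Lie-irreducible case are all workable (though more roundabout than necessary: the paper simply uses the Whitney formula $\hat c_2(V,\nabla)=\hat c_1(V_1,\nabla_1)\cdot\hat c_1(V_2,\nabla_2)=0$ in $H^3(X^{\an},\CC/\QQ(2))$ for the filtration coming from a reducible representation, with no need to extend line bundles to a compactification or to invoke Reznikov). The genuine gap is exactly where you locate it yourself: the Lie-irreducible, non-rigid case. Neither of the two routes you sketch there is carried out, and neither is the one that actually works.

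The missing idea is the Corlette--Simpson classification \cite[Theorem 1]{CS}: a Zariski-dense representation $\rho\colon\pi_1(X^{\an},x)\to\SL(2,\CC)$ with quasi-unipotent monodromy at infinity is either rigid (whence Proposition \ref{rigid} applies, as you say) or it \emph{projectively factors through an orbicurve} $f\colon X\to C$. In the latter case one composes $\pi_1^{\orb}(C^{\an},f(x))\to\PGL(2,\CC)$ with the adjoint representation $\PGL(2,\CC)\to\SL(3,\CC)$ to obtain a local system $\VV_\tau$ on $C$ with $f^*\VV_\tau\simeq\Sym^2\VV_\rho$; after a finite base change replacing $C$ by an honest curve $\tilde C$, one gets $4\hat c_2(\VV_\rho)=\hat c_2(\Sym^2\VV_\rho)=f^*\hat c_2(\VV_\tau)=0$ in $H^3(X^{\an},\CC/\QQ(2))$, since the class is pulled back from a curve. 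This completely avoids any deformation-to-rigid argument or any claim of local constancy of $\hat c_2$ modulo torsion on components of $M_B(X,\GL(2),\{C_i\})$ --- a claim which, in the quasi-projective setting with prescribed conjugacy classes at infinity, is not established in the paper and would itself require substantial work. Your initial reduction to unipotent monodromy at infinity is also unnecessary for the actual argument and adds a step (finding a single finite cover trivializing all the eigenvalues at infinity simultaneously) that you do not justify.
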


\begin{proof}
	Let us consider the short exact sequence of groups
	$$1\to \mu _2\to \CC^*\to \CC^*\to 1,$$
	where the map $\CC^*\to \CC^*$ is given by $z\to z^2$.
	\cite[Lemma 2.6]{LS} shows that there exists a finite surjective morphism $\tilde X\to X$ of algebraic varieties 
	and a rank $1$ representation $\tau : \pi_1 (\tilde X, \tilde x)\to \CC^*$ such that $f^*(\det \rho )=\tau ^2$.
	Then the image of  $\tilde \rho:= f^*\rho\otimes \tau ^{-1}: \pi_1 (\tilde X, \tilde x) \to \GL(2,\CC)$ is contained in $ \SL (2, \CC)$.	Since the maps $H ^{2i-1}(X ^{\an}, \CC/\QQ (i))\to H ^{2i-1}(\tilde X^{\an}, \CC/\QQ (i))$ are injective and $\tilde \rho$ has quasi-unipotent monodromy at infinity, it is sufficient to prove the required assertions 
	assuming that $\rho$ is a  representation into $\SL(2, \CC)$.

	If the representation  $\rho: \pi_1 (X^{\an},x)\to \SL (2, \CC)$ is not irreducible then $(V, \nabla)$ has a filtration
	with quotients $(V_1, \nabla _1)$ and  $(V_2, \nabla _2)$ being  rank $1$ flat vector bundles.
	Clearly, we need to prove the required vanishing of characteristic classes only for $i=2$. Then 
	$$\hat c_{2} (V , \nabla )= \hat c_{1}(V_1 , \nabla _1) \cdot \hat c_{1} (V _2, \nabla _2)=0$$
	in $H ^{3}(X^{\an}, \CC/\QQ (2))$ and similarly for $c_2^{\an}$ (cf. proof of \cite[Theorem A.1 and Theorem A.4]{CE}).
	
	Therefore we can assume that $\rho$ is irreducible, i.e., the image of $\rho$ is not contained in any proper 
	parabolic subgroup of $\SL (2, \CC)$. We can also assume that it remains irreducible on all finite index subgroups of $\pi_1 (X^{\an},x)$ (otherwise, we can find a finite \'etale covering $f: \tilde X\to X$ on which $f^*\rho$
	is not irreducible, which as before implies the required vanishing of characteristic classes).
	
	This shows that we can assume that the image of $\rho$ is Zariski dense in $\SL (2, \CC)$ (cf. \cite[Proposition 2.7]{LS}). 
	If $\rho $ is rigid then the assertions follow from Proposition \ref{rigid}. Otherwise, by  \cite[Theorem 1]{CS} the representation $\rho$ projectively factors through an orbicurve, i.e., there exists an orbicurve $C$,
	a fibration $f:X\rightarrow C$, and a commutative diagram
	$$
	\begin{array}{ccc}
		\pi _1(X^{\an},x) & \xrightarrow{\ ~ \rho \ } & \SL (2,\CC ) \\
		\downarrow & & \downarrow \\
		\pi _1^{\orb}(C^{\an},f(x)) & \xrightarrow{\ ~ ~ \ \ } & \PGL (2,\CC ).
	\end{array}
	$$
	Composing $ \pi _1^{\orb}(C^{\an},f(x))\to \PGL (2,\CC )$ with the adjoint representation $\PGL (2,\CC ) \to \SL (3,\CC)$
	we get a representation  $\tau : \pi _1^{\orb}(C^{\an},f(x))\to \SL (2,\CC )$ such that $f^*\VV_{\tau}\simeq \Sym ^2 \VV _{\rho}$. Therefore by functoriality of the Riemann--Hilbert correspondence (Lemma \ref{functorial-RH}) we have
	$$4\hat c_2(\VV_{\rho})=\hat c_2 (\Sym^2 (\VV_{\rho} ))= f^*\hat c_2 (\VV_{\tau})= 0$$ in $H ^{3}(X^{\an}, \CC/\QQ (2))$ 
	(here we again use the fact that $\hat c_1(\VV_{\rho})^2=0$).
	More precisely,  we can find a finite surjective map $g: \tilde C\to C$ from a smooth curve $\tilde C$.
	Taking the base change $f_X : \tilde X \to \tilde C$ of $f$ by $g$ we get a finite surjective map $g_X : \tilde X \to X$ and then $f_X^*(g^*\VV_{\tau}) \simeq \Sym ^2 (g_X^*\VV_{\rho})$,
	where $g^*\VV_{\tau}$ is a local system on a usual curve. This implies that  $g_X^* \hat c_2(\VV _{\rho}) = \hat c_2 (g_X^*\VV _{\rho })=0\in 
	H ^{3}({\tilde X}^{\an}, \CC/\QQ (2))$, which gives the required equality in $H ^{3}(X^{\an}, \CC/\QQ (2))$.
	The proof for $c_2^{\an}$ is the same.
	The last assertion follows from the first part and \cite[Theorem 1]{DHZ}.
\end{proof}

\medskip

\begin{remark}
In case of $\hat c_i$ the above theorem is a special case of \cite[Theorem 6.1]{Re2} but with a different proof.
\end{remark}

\subsection{Chern classes of rank $2$ flat vector bundles in the Chow ring}

Let us recall that the Bloch--Beilinson conjecture predicts that if $X$ is a smooth quasi-projective variety defined over a number field then the cycle class maps  $\CH ^i (X ) _{\QQ}\to H^{2i}_{\cD} (X, \QQ (i))$ are injective for all $i\ge 1$. This conjecture can be naturally extended to smooth algebraic Deligne--Mumford stacks defined over a number field (and one can see that it is equivalent to the original conjecture).

Let $\cM$ be a polydisk Shimura modular stack as defined in \cite[Section 9]{CS}. We consider it as a smooth algebraic Deligne--Mumford stack (see ibid.). By construction, this stack is defined over a number field, so a special case of 
the Bloch--Beilinson conjecture leads to the following:

\begin{conjecture}\label{Shimura-conj}
	The cycle class map $\CH ^i (\cM ) _{\QQ}\to H^{2i}_{\cD} (\cM, \QQ (i))$ is injective for all $i\ge 1$.
\end{conjecture}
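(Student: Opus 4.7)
The statement is a special case of the Bloch--Beilinson conjecture applied to a very specific class of Shimura-type Deligne--Mumford stacks, so any honest plan must acknowledge that proving it unconditionally is essentially equivalent to making progress on Bloch--Beilinson itself. Still, the extra structure of polydisk Shimura modular stacks (they are built, up to finite covers, from quotients of products of upper half planes by arithmetic subgroups, hence are closely related to Hilbert modular varieties) suggests a line of attack more concrete than for a general smooth variety over a number field.

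First I would dispose of $i=1$: the map $\CH^1(\cM)_{\QQ}\cong\Pic(\cM)_{\QQ}\to H^2_{\cD}(\cM,\QQ(1))$ is injective by the standard exponential/Deligne-cohomology exact sequence together with the fact that $\cM$, being a Shimura-type stack over a number field, has finitely generated Picard group up to torsion (one reduces to the coarse space via the averaging trick, and then uses that the kernel of the cycle class map into Deligne cohomology on a smooth variety is the subgroup of divisors algebraically equivalent to zero, which on $\cM$ is torsion). Next I would pass to a smooth toroidal compactification $\bar\cM$ with simple normal crossing boundary, so that Deligne cohomology and Chow groups are well-behaved, and reduce the injectivity for $\cM$ to that for $\bar\cM$ by the usual localization sequences and induction on dimension.

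For $i\ge 2$ the plan is to exploit the algebra $\mathbb{T}$ of Hecke correspondences acting on $\cM$. Using the decomposition of the cohomology of $\bar\cM$ into automorphic types (Eisenstein, residual, cuspidal), I would try to split $\CH^i(\bar\cM)_{\QQ}$ accordingly via Hecke projectors. On the Eisenstein part the cycles should be expressible explicitly in terms of boundary strata and their intersections, and the injectivity on this piece can be checked directly from the combinatorics of the toroidal boundary. On the cuspidal part the cycle class map into Deligne cohomology is Beilinson's regulator, and its injectivity on the $\QQ$-span of the relevant Hecke projectors is predicted by Beilinson's conjecture relating its image to non-vanishing special values of automorphic $L$-functions.

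The decisive obstacle is exactly this last point. Outside of the rank-one situation and a handful of low-dimensional Shimura varieties, one has no unconditional tool to show that Beilinson's regulator is injective on the cuspidal summand of $\CH^i_{\QQ}$. Even in the simplest nontrivial instance of the conjecture --- say $\bar\cM$ a Hilbert modular surface and $i=2$ --- this is essentially the content of Bloch's and Beilinson's conjectures on $\CH^2$ of surfaces over number fields, which are only known in very special cases. Thus while the Hecke decomposition gives a clear road map and reduces the problem to a well-understood conjectural input, the final step is beyond current technology, and I would expect genuine progress on Conjecture \ref{Shimura-conj} only as a corollary of progress on the general injectivity part of Bloch--Beilinson.
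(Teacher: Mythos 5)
The statement you were asked to prove is Conjecture \ref{Shimura-conj}, and the paper does not prove it: it is introduced explicitly as a special case of the Bloch--Beilinson conjecture for polydisk Shimura modular stacks (which are defined over a number field), and the author states that ``the Bloch--Beilinson conjecture for Shimura varieties is still wide open.'' It is used in the paper only as a hypothesis, in Proposition \ref{rk2-conj-torsion} and Theorems \ref{main} and \ref{rk2-families-torsion}, to conclude that Deligne--Chern classes of the tautological rank $2$ flat bundles on $\cM$, which vanish in $H^{2i}_{\cD}(\cM,\QQ(i))$ by Theorem \ref{Reznikov-analogue-rk2}, also vanish in $\CH^i(\cM)_{\QQ}$. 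So there is no proof in the paper against which to measure your argument.

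Your proposal is honest about this and is best read as a research program rather than a proof. The reductions you describe (handling $i=1$ via the Picard group, passing to a toroidal compactification, splitting by Hecke projectors into Eisenstein and cuspidal parts) are reasonable and consistent with what is known, and you correctly locate the decisive obstruction: injectivity of Beilinson's regulator on the cuspidal summand of $\CH^i_{\QQ}$, which already for a Hilbert modular surface and $i=2$ is an open case of the Bloch--Beilinson conjectures. This matches the paper's own framing; the author cites \cite{EH1} and \cite{EH2} for the only known partial results on Chern classes of automorphic vector bundles. The one caution is that even your ``easy'' steps would need care in the stacky, non-proper setting (the localization argument reducing $\cM$ to $\bar\cM$ and the claimed splitting of Chow groups by Hecke projectors are not automatic at the level of $\CH^i_{\QQ}$, since the relevant standard conjectures are not known here). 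But since no unconditional proof is possible by your own (correct) assessment, these are secondary issues: the bottom line is that the statement remains a conjecture, and your text should not be presented as a proof of it.
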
 

The above conjecture implies that Chern classes of tautological  (``automorphic'') rank $2$ flat vector bundles on $\cM$ are torsion (see the proof of Proposition \ref{rk2-conj-torsion} and this is the main reason why we need it. Although Chern classes of automorphic vector bundles on  Shimura varieties have been studied (see \cite{EH1}
and \cite{EH2}), one knows only special results in this direction and the Bloch--Beilinson conjecture for Shimura varieties is still wide open.

\begin{proposition} \label{rk2-conj-torsion}
	Let  $\rho: \pi_1 (X^{\an},x)\to \GL(2, \CC)$ be a Lie irreducible representation with quasi-unipotent monodromy at infinity. If Conjecture \ref{Shimura-conj} holds then  $\Delta (V _{\rho})$ is a torsion class in  $\CH ^2 (X)$.
\end{proposition}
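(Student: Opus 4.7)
The plan is to follow the dichotomy already used in the proof of Theorem \ref{Reznikov-analogue-rk2}, but now to sharpen the conclusion from ``torsion in Deligne cohomology'' to ``torsion in $\CH^2(X)$'' using the conjectural Bloch--Beilinson injectivity on polydisk Shimura modular stacks. First I would note that $\Delta$ is invariant under twisting by a line bundle (a direct Chern-roots computation for rank $2$), so applying \cite[Lemma 2.6]{LS} exactly as in the beginning of the proof of Theorem \ref{Reznikov-analogue-rk2}, I pass to a finite surjective alteration $f:\tilde X\to X$ and replace $\rho$ by a representation $\tilde\rho:\pi_1(\tilde X^{\an},\tilde x)\to \SL(2,\CC)$ with quasi-unipotent monodromy at infinity. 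Since $f_*f^*=\deg f$ on $\CH^*_{\QQ}$, it suffices to show $\Delta(V_{\tilde\rho})$ is torsion. Lie irreducibility is preserved by restriction to finite-index subgroups, so the image of $\tilde\rho$ is Zariski dense in $\SL(2,\CC)$.

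Now I would apply \cite[Theorem 1]{CS} to $\tilde\rho$: either (a) $\tilde\rho$ projectively factors through an orbicurve $C$ via some fibration $g:\tilde X\to C$ as in the second part of the proof of Theorem \ref{Reznikov-analogue-rk2}, or (b) $\tilde\rho$ is rigid. In case (a), after the same base change to a smooth curve $\tilde C$ used there, the $\PGL(2)$-bundle $\PP(V_{\tilde\rho})$ is pulled back from a $\PGL(2)$-bundle on $\tilde C$. Since $\Delta(V)=c_2(\End V)$ depends only on the associated $\PGL(2)$-bundle and $\CH^2(\tilde C)=0$, a pushforward-pullback argument with $\QQ$-coefficients yields that $\Delta(V_{\tilde\rho})$ is torsion already in $\CH^2(\tilde X)$, and hence also in $\CH^2(X)$.

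In case (b), I would invoke the Shimura factorization established in \cite{CS}: after a further finite \'etale cover (still denoted $\tilde X$), there exist a morphism $h:\tilde X\to \cM$ to a polydisk Shimura modular stack $\cM$ together with a tautological rank $2$ flat bundle $(\cV,\nabla_\cV)$ on $\cM$ such that $\PP(h^*\cV,h^*\nabla_\cV)\simeq \PP(V_{\tilde\rho},\nabla_{\tilde\rho})$. Again, because $\Delta$ depends only on the projective bundle, $\Delta(V_{\tilde\rho})=h^*\Delta(\cV)$ in $\CH^2(\tilde X)$. Theorem \ref{Reznikov-analogue-rk2} applied to $\cV$ on $\cM$ (or Reznikov's original theorem when $\cM$ is proper) shows that $\Delta(\cV)$ is torsion in $H^4_{\cD}(\cM,\QQ(2))$; Conjecture \ref{Shimura-conj} then upgrades this to torsion in $\CH^2(\cM)_{\QQ}$, and pulling back via $h$ concludes the argument.

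The main obstacle I anticipate is the interface with \cite{CS}: one has to check that the Shimura factorization produces a genuine morphism of algebraic Deligne--Mumford stacks (not merely a factorization at the level of $\pi_1$), so that the identity $\Delta(V_{\tilde\rho})=h^*\Delta(\cV)$ literally holds in the Chow group; and one has to verify that the quasi-unipotent-at-infinity hypothesis on $\tilde\rho$ transports to a compatible boundary condition on $\cV$ so that Theorem \ref{Reznikov-analogue-rk2} genuinely applies on the possibly non-proper stack $\cM$ and Conjecture \ref{Shimura-conj} can be used in the form stated.
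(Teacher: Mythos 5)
Your proposal is correct and follows essentially the same route as the paper: reduce to a Zariski dense $\SL(2,\CC)$-representation via the twisting trick of \cite[Lemma 2.6]{LS}, invoke the Corlette--Simpson dichotomy, kill $\Delta$ in the orbicurve case by pulling back the adjoint ($\Sym^2$) local system from a curve where $\CH^2$ vanishes, and in the rigid case factor through the polydisk Shimura modular stack of \cite[Theorem 11.2]{CS}, where Theorem \ref{Reznikov-analogue-rk2} gives vanishing in Deligne cohomology and Conjecture \ref{Shimura-conj} upgrades this to the Chow group. The technical caveats you flag at the end (algebraicity of the map to $\cM$ and compatibility of boundary conditions) are precisely the points the paper handles by citing \cite{CS} directly.
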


\begin{proof} As in the proof of Theorem \ref{Reznikov-analogue-rk2} we can reduce to the case  
	$\rho: \pi_1 (X^{\an},x)\to \SL (2, \CC)$. The only difference is that in the Chow group the self-intersection of the first Chern class is possibly non-zero and we need to use invariance of the discirrminant under tensoring with a line bundle.
	Now our assumptions imply that the image of $\rho$ is Zariski dense in  $\SL (2, \CC)$.  Therefore by \cite[Theorem 1]{CS} we need to consider two cases.
	
	In the first case  $\rho$ projectively factors through an orbicurve, i.e., there exists an orbicurve $C$,
	a fibration $f:X\rightarrow C$, and a commutative diagram
	$$
	\begin{array}{ccc}
		\pi _1(X,x) & \xrightarrow{\ ~ \rho \ } & \SL (2,\CC ) \\
		\downarrow & & \downarrow \\
		\pi _1^{\orb}(C,f(x)) & \xrightarrow{\ ~ ~ \ \ } & \PGL (2,\CC ).
	\end{array}
	$$
	Composing $ \pi _1^{\orb}(C,f(x))\to \PGL (2,\CC )$ with the adjoint representation $\PGL (2,\CC ) \to \SL (3,\CC)$
 by Lemma \ref{functorial-RH}	we get on the orbicurve $C$ a flat vector bundle $(W, \nabla_W)$ with regular singularities  such that $f^*W\simeq \Sym ^2 V_{\rho}$.  Therefore $\Delta (V_{\rho})=c_2 (\Sym^2 V_{\rho})=0$ in $\CH ^2(X)_{\QQ}$ (note that formally we should use the same arguments as that in proof of Theorem \ref{Reznikov-analogue-rk2} and pass to a finite covering of $C$). 
	
	The second case we need to consider is when $\rho$ is rigid and  $\rho$ does not projectively factor through an orbicurve. 
	In this case $\rho$ is of geometric origin and by \cite[Corollary 8.4]{CS} there is a family of abelian varieties over $X$ such that $\rho$ is a direct summand of the underlying complex monodromy representation.
	Then we would like to use \cite[Theorem 1.1]{EV}. Unfortunately, we cannot use it directly as the theorem concerns 
	weight one (polarizable) $\ZZ$-variations of Hodge structure (VHS) and we have only a weight one (polarizable) $\CC$-VHS.
	However, by \cite[Theorem 11.2]{CS} there exists a map  $f: X\to \cM$ to certain polydisk Shimura modular stack $\cM$ and a tautological rank $2$ local system $\WW$ on $\cM$ such that $\rho$ is conjugate to the monodromy representation of $f^*\WW$.
	By Theorem \ref{Reznikov-analogue-rk2} for $i\ge 2$  Deligne's Chern classes $c_i^{D}$ of the flat algebraic vector bundle associated to $\WW$ vanish in $ H^{2i}_{\cD} (\cM, \QQ (i))$. So Conjecture \ref{Shimura-conj} implies that they vanish also in  $\CH ^i (\cM ) _{\QQ}$. Now naturality of Chern classes implies that $\Delta (V_{\rho})$ vanishes in  $\CH ^2 (X)$.	
\end{proof}

Note that since in the above proposition $V$ has rank $2$, all Chern classes $c_m(V)$  vanish in $\CH ^m (X)$ for $m>2$.

\begin{lemma} \label{Marc-Levine}
	Let $X$ be a smooth complex projective variety  and let $T$ be a smooth complex affine curve.
	Let $C$ be a codimension $i$ cycle in $X\times T$. Let us assume that the class of $C_t$ is torsion in $\CH^i (X)$ for all closed points $t$ of $T$. Then there exists a uniform bound on the order of torsion of  $C_t$ that is independent of the choice of $t$.
\end{lemma}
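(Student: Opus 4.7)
The plan is to stratify $T$ by the torsion order of $[C_t]$, show that each stratum is a countable union of constructible subsets of $T$, and then conclude by a cardinality argument using that $T(\CC)$ is uncountable while constructible subsets of an irreducible curve are either finite or cofinite. Concretely, I would set
$$T_n:=\{t\in T(\CC)\,:\,n\cdot [C_t]=0 \text{ in } \CH^i(X)\},$$
so that by hypothesis $T(\CC)=\bigcup_{n\ge 1}T_n$, and the goal is to exhibit a single $n$ for which $T_n$ coincides with $T(\CC)$ up to finitely many exceptional points.

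The key step would be to show that each $T_n$ is a countable union of constructible subsets of $T$. For this I would use that a codimension $i$ cycle $Z$ on $X$ satisfies $Z\sim_{\rm rat} 0$ precisely when there exist effective cycles $W_+,W_-$ on $X\times \PP^1$, flat over an open neighbourhood of $\{0,\infty\}$, with $Z=(W_+)_0-(W_+)_\infty-(W_-)_0+(W_-)_\infty$. Effective cycles on the projective variety $X\times \PP^1$ are parametrised by a countable disjoint union of Chow varieties, indexed by degree (or cohomology class), each of which is a finite-type scheme over $\CC$. For each pair of such components, the locus in $T\times {\rm Chow}\times {\rm Chow}$ cut out by the flatness condition at $\{0,\infty\}$ and the cycle identity $(W_+)_0-(W_+)_\infty-(W_-)_0+(W_-)_\infty=n\cdot C_t$ is a constructible subscheme; its image in $T$ under projection is constructible by Chevalley's theorem, and the union of these images over all components is precisely $T_n$.

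For the conclusion, I would use that $T$ is a smooth irreducible curve, so every constructible subset of $T$ is either contained in a finite set of closed points or contains a Zariski dense open subset. A countable union of finite subsets of $T(\CC)$ is countable, but $T(\CC)$ is uncountable, hence at least one $T_n$ must contain a dense open $U\subset T$. The complement $T\setminus U$ is then a finite set of closed points of $T$, and for each such $t$ the hypothesis provides an integer $d_t$ with $d_t\cdot [C_t]=0$. Setting $N:=n\cdot \mathrm{lcm}\{d_t:t\in T\setminus U\}$ yields $N\cdot [C_t]=0$ for every $t\in T(\CC)$, which is the required uniform bound.

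The main obstacle is justifying the constructibility claim in the second step; the rest is a standard countability argument on a curve. This relies on the (well-known but nontrivial) observation that the condition ``$n\cdot [C_t]$ is rationally equivalent to zero'' becomes constructible once one parametrises the rationalising cycles on $X\times \PP^1$ by Chow varieties, and is essentially the same mechanism that forces $\CH^i(X)/\sim_{\rm alg}$ to be countably generated. If one prefers to avoid Chow varieties one can argue identically with Hilbert schemes of $X\times \PP^1$, which are countable disjoint unions of projective schemes.
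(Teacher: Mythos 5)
Your proof is correct, but it takes a genuinely different route from the paper's. The paper spreads everything out over a countable algebraically closed field $k_0\subset\CC$ of finite transcendence degree over $\bar{\QQ}$, chooses a closed point $t\in T(\CC)$ that is a \emph{geometric generic point} of the $k_0$-model $T_0$, applies the hypothesis at that single point to get a torsion order $N$, spreads the resulting rational equivalence out over a dense open $U_0\subset T_0$ (picking up a factor $d=[k_0(t):k_0(T_0)]$), and finally reaches an arbitrary closed point $t$ by replacing its class in $\CH_0(T)$ by a $0$-cycle supported on $U$ --- possible because $T$ is a curve. You instead stratify $T$ by torsion order and use the uncountability of $T(\CC)$ against the countability of the Chow-variety-parametrized strata $T_n$; both proofs thus produce a dense open over which one fixed $n$ kills $C_t$, but by dual mechanisms (very general point plus spreading out, versus a counting argument plus constructibility of the rational-equivalence locus). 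Your route buys a cleaner endgame: the finitely many exceptional points are handled directly by the hypothesis and an lcm, with no need to move $0$-cycle classes on $T$. The paper's route buys independence from the nontrivial input you flag yourself, namely that $\{t: n\cdot[C_t]=0\}$ is a countable union of constructible sets --- it only needs the elementary description of $\CH^i(X_{k(T)})$ as a direct limit over open subsets. Two small points to tighten in your write-up: (i) the assignment $t\mapsto C_t$ is an algebraic family of cycles only over the open subset of $T$ where every component of $C$ dominates $T$, so the constructibility argument should be run there and the remaining points swept into your finite exceptional set (which your lcm step already accommodates); (ii) the constructibility of the incidence condition $(W_+)_0-(W_+)_\infty-(W_-)_0+(W_-)_\infty=n\cdot C_t$ on $T\times{\rm Chow}\times{\rm Chow}$ deserves a reference to the theory of families of cycles, since specialization of cycles to fibers of $\PP^1$ is only a morphism to the Chow variety after suitable normalization.
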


\begin{proof}
	Everything is defined over an algebraically closed field $k_0$ of finite transcendence dimension over $\bar{\QQ}$, say by $X_0$, $T_0$ and $C_0$. Then one can find a $\CC$-point $t$ of $T$ that is a geometric generic point of $T_0$ over $k_0$.  This means concretely that $k_0(T_0)$ is a subfield of $k_0(t)$ and the field extension $k_0(T_0)\to  k_0(t)$ is necessarily finite. By assumption, the cycle $C_t$ is torsion, say $N$-torsion. The varieties one needs to realize this are also defined over a field  $L$ that is finitely generated over $k_0$. View $L$ as the field of functions of a   
	$k_0$-variety $Y$. Over a dense open subset $U$ of $Y$ one can spread out the varieties defined over $L$. Since $k_0$ is algebraically closed, one can then restrict back. This shows that the cycle $C_{0t}$ is $N$-torsion in $\CH ^i(X_0\times_{k_0}k_0(t))$. Since $t$ is a geometric generic point over $k_0$, this again can be spread out over a dense open subset $U_0$ of $T_0$ to a family of curves  and functions that exhibit $C_0\cap X_0 \times U_0$ as $N’$-torsion, $N’=N.d$, where $d$ is the (finite) field extension degree of $k_0(t)$ over $k_0(T_0)$. But then $C \cap X \times U$ is $N$-torsion, where $U$ is $U_0$ base-extended to $\CC$. If $t$ is a closed point of $T$, then there is a $0$-cycle $z$ on $U$ with the same class as $t$ in $\CH_0(T)$, so $C_z$ is equivalent to $C_t$ in $\CH ^i(X)$. Since $C_z$ is $N'$-torsion, so is $C_t$.
\end{proof}

\medskip

Proposition \ref{rk2-conj-torsion} and Lemma \ref{Marc-Levine} strongly suggest that one should have a uniform bound on the order of torsion of  $\Delta (V _{\rho})$ in families of flat vector bundles. We prove theorem showing this kind of boundedness below. Note however that algebraic monodromy groups are not well behaved in families (in general they ae not even algebraically constructible; see Subsection \ref{variation}), so we cannot directly use the above results.

\medskip

\begin{definition}
	We say that $(V, \nabla)$ is a \emph{good family of vector bundles with integrable connection parametrized by $S$} if $(V, \nabla)$ is a  vector bundle with a relative integrable connection on $X\times S /S$ and there exists  a Zariski open subset $U\subset S$ such that for all $s\in U$
	\begin{enumerate}
		\item  $(V_s, \nabla _s)= (V, \nabla)|_{X\times \{s\}}$  have regular singularities,
		\item  the monodromy representation of $(V _s, \nabla_s)$ is quasi-unipotent at infinity.
	\end{enumerate}
\end{definition}

\begin{theorem}\label{rk2-families-torsion}
	Let $X$ be a smooth complex quasi-projective variety.
	Let $(V, \nabla)$ be a  good family of rank $2$ vector bundles with integrable connection parametrized by an irreducible complex variety $S$. Assume that  Conjecture \ref{Shimura-conj} holds and for some $s_0\in S (\CC)$
	the geometric monodromy group $M_{\rm geom} (V_{s_0}, \nabla _{s_0})$ contains $\SL(2, \CC)$. Then there exists a positive integer $N$ such that for all $s\in S$ we have 
	$$N\cdot \Delta (V_s)=0 $$ 
	in $\CH ^2 (X\otimes _{\CC} k(s))$.	
\end{theorem}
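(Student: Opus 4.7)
The plan is to combine Proposition \ref{rk2-conj-torsion} with Lemma \ref{Marc-Levine} and a localization argument for Chow groups in families. Passing to a smooth alteration of $S$ (which is surjective and preserves the fibers of the family, hence preserves all hypotheses and conclusions pointwise), I may assume $S$ is smooth.

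First I would show that the locus $U\subset S$ on which $M_{\rm geom}(V_s,\nabla_s)\supset \SL(2,\CC)$ contains a Zariski open dense subset of $S$. Applying Theorem \ref{Gabber} to a smooth curve in $S$ through $s_0$ shows that the algebraic monodromy at the generic point $\eta$ of $S$ contains $\SL(2)$. Projectivizing, Proposition \ref{density} applied to the connected semisimple group $\PGL(2,\CC)$ shows that the Zariski-density locus in the relevant character scheme is Zariski open; intersecting with the good-family locus and pulling back via the classifying map from Corollary \ref{moduli-map-for-de_Rham-families} (combined with the algebraic nature of $M_{\rm alg}(V_s,\nabla_s)$ in families) gives the required Zariski dense open $U$. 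For every $s\in U(\CC)$ the representation $\rho_s$ is Lie irreducible, since $\SL(2)\subset M_{\rm geom}(\rho_s)^0$ already acts irreducibly on $\CC^2$; it also has quasi-unipotent monodromy at infinity by the good-family condition. Proposition \ref{rk2-conj-torsion}, conditional on Conjecture \ref{Shimura-conj}, then gives that $\Delta(V_s)$ is torsion in $\CH^2(X)$. The spreading-out argument from the proof of Lemma \ref{Marc-Levine} applies verbatim to the irreducible variety $U$ in place of a curve (one invokes spreading-out over a subfield of $\CC$ finitely generated over $\bar{\QQ}$; alternatively one reduces to curves by Lefschetz slicing), producing a single positive integer $N_0$ such that $N_0\cdot \Delta(V_s)=0$ for every closed $s\in U$.

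To extend the bound to an arbitrary point $s_*\in S$, I would choose by Bertini a smooth curve $T\subset S$ through $s_*$ meeting $U$. The class $N_0\cdot\Delta(V)\in \CH^2(X\times T)$ then restricts to zero on $X\times (T\cap U)$, so by the localization sequence
\[
\bigoplus_{t\in T\setminus U} \CH^1(X)\;\longrightarrow\;\CH^2(X\times T)\;\longrightarrow\;\CH^2(X\times (T\cap U))\;\longrightarrow\;0
\]
one may write $N_0\cdot\Delta(V)=\sum_k \iota_{t_k *}\alpha_k$ with $t_k\in T\setminus U$ and $\alpha_k\in \CH^1(X)$. Pulling back via the fiber inclusion $j_{s_*}\colon X\hookrightarrow X\times T$, the terms with $t_k\neq s_*$ vanish by disjointness of supports, while the term with $t_k=s_*$ contributes $c_1(N_{X/X\times T})\cap \alpha_k=0$, since the normal bundle of a fiber of the smooth projection $X\times T\to T$ is trivial. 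Hence $N_0\cdot\Delta(V_{s_*})=0$ in $\CH^2(X)$. A non-closed point $s\in S$ is handled by flat base change from a dense open set on which we already know the vanishing, so the theorem holds with $N=N_0$.

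The most delicate point is the Zariski (as opposed to merely analytic) openness of the large-monodromy locus in the first step: one has to combine Gabber's specialization theorem with upper semicontinuity of the algebraic monodromy group in an algebraic family, passing through the algebraic de Rham moduli description and the Riemann--Hilbert correspondence to promote the analytic openness coming from Proposition \ref{density} to a genuine Zariski-open dense $U\subset S$. A secondary technical point is that the fiber-restriction argument in the last step requires $S$ (and hence the chosen curve $T$) to be smooth at $s_*$ and $U$ to be dense, both of which are arranged by the preliminary alteration and the density output of the first step.
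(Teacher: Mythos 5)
Your second half --- the localization sequence on $X\times T$ for a curve $T$ through $s_*$ meeting $U$, with the boundary contribution killed because $i^*i_*$ is cup product with a normal bundle class that dies on the fiber --- is essentially the paper's own specialization argument, and your overall architecture (prove torsion over a dense open locus, then specialize) matches the paper's. The genuine gap is in your first step, and the fix you sketch for the point you yourself flag as delicate does not work. You need a Zariski open dense $U\subset S$ such that $M_{\rm geom}(V_s,\nabla_s)\supset\SL(2,\CC)$ for every \emph{closed} point $s\in U$, so that Proposition \ref{rk2-conj-torsion} applies pointwise and the Lemma \ref{Marc-Levine} argument can be run. But Gabber's theorem (Theorem \ref{Gabber}) gives the inclusion $M_{\alg}(V_s,\nabla_s)\subset G_s$: the monodromy group can only \emph{drop} under specialization. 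It correctly propagates largeness from $s_0$ up to the generic point, but it gives nothing in the direction you need, from the generic point back down to a dense open set of closed points. Proposition \ref{density} does give a Zariski open locus of Zariski-dense representations in the character scheme, but the classifying map of Corollary \ref{moduli-map-for-de_Rham-families} is only analytic (Riemann--Hilbert is transcendental), so its preimage in $S$ is only analytically open. There is no ``upper semicontinuity of the algebraic monodromy group'' in the direction you invoke: the example in Subsection \ref{variation} ($X=\CC^*$, $\nabla(x\,d/dx)=x\,d/dx+t$) has generic monodromy $\GG_m$ but finite monodromy for all $t\in\QQ$, so the large-monodromy locus need not even be constructible; and the Remark following the Proposition in that subsection states explicitly that one cannot assert large monodromy at all closed points of the relevant component, and that if one could, the theorem would follow from Proposition \ref{rk2-conj-torsion} and Lemma \ref{Marc-Levine} --- i.e.\ your route is precisely the one the paper says it cannot carry out.

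The paper avoids this by never leaving the generic point: using Lemma \ref{surjectivity-after-extension} (to control base change of the Tannakian monodromy group after a finite extension of $k(\eta)$, realized by an alteration $S'\to S$) together with Theorem \ref{Gabber}, it shows $PM_{\alg}(V_{\bar\eta},\nabla_{\bar\eta})=\PGL(2,k(\bar\eta))$, applies Proposition \ref{rk2-conj-torsion} to the single flat bundle $V_{\bar\eta}$ on $X\otimes_\CC k(\bar\eta)$ to get $N\cdot\Delta(V_{\bar\eta})=0$, and spreads this one rational equivalence out to $N\cdot\Delta(V)|_{X\times U}=0$ via $\CH^2(X_K)\simeq\varinjlim_U\CH^2(X\times U)$, with no appeal to closed points of $U$ or to Lemma \ref{Marc-Levine}. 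If you replace your first step by this generic-point argument, the rest of your proof goes through; note also that your final sentence dismissing non-closed points ``by flat base change'' is too quick --- the paper's choice of an auxiliary smooth $T$ with $\dim T=\dim\overline{\{s\}}+1$ and $k(t)\simeq k(s)$ is exactly what makes the specialization step work for non-closed $s$.
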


\begin{proof}
Possibly passing to the desingularization of $S$ and pulling-back $(V, \nabla)$, we can assume that $S$ is smooth.
For any point $s\in S$ we denote by $PM_{\alg} (V _s, \nabla _s, x)$ the image of 
$M_{\alg} (V, \nabla, x) \subset \GL (2 , k(s))\to \PGL (2, k(s))$. Let $\eta$ be the generic point of $S$ and let us set $K=k(\eta)$.  
By Lemma \ref{surjectivity-after-extension} there exists a finite extension $K\subset L$ contained in $\bar K= k(\bar \eta)$ such that 
$$M_{\alg}(V\otimes _K \bar K, \nabla \otimes _K\bar K, x\otimes _K\bar K ) \to M _{\alg}(V\otimes _K L, \nabla \otimes _K L, x\otimes _K L )\otimes _L\bar K$$
is an isomorphism. Let $S'\to S$ be a (proper) alteration from smooth $S'$ inducing the field extension $K\subset L$.
Let $(V', \nabla ')$ be the pull-back $(V, \nabla)$ to $X\times _{\CC} S'$ and let $s_0'\in S'(\CC)$ be a fixed point lying over $s_0$. Clearly, $M_{\alg} (V'_{s_0'}, \nabla ' _{s_0'})=M_{\alg} (V_{s_0}, \nabla _{s_0})$. Therefore   
$PM_{\alg} (V'_{s_0'}, \nabla ' _{s_0'}, x) =\PGL (2, \CC)$ is $2$-dimensional over $\CC$.
 Let $\eta '$ be the generic point of $S'$. By construction we have $k(\eta ' )=L$.
By Theorem \ref{Gabber} and successive specialization we see that $PM_{\alg} (V' _{\eta '}, \nabla' _{\eta'}, x)$ has dimension at least $2$ over $L$. But by the above isomorphism, we have   
$$PM_{\alg} (V' _{\bar \eta '}, \nabla' _{\bar \eta'}, x) \simeq PM_{\alg} (V' _{\eta '}, \nabla' _{\eta'}, x)\otimes _L \bar K,$$
so  $PM_{\alg} (V '_{\bar \eta '}, \nabla' _{\bar \eta'}, x)=\PGL (2, k(\bar \eta '))$. 
This can be rewritten as $PM_{\alg} (V_{\bar \eta }, \nabla _{\bar \eta}, x)=\PGL (2, k(\bar \eta ))$.

Now our assumptions and Proposition \ref{rk2-conj-torsion} imply that $\Delta (V_{\bar \eta})$ is torsion in $\CH ^2 (X\otimes _\CC k(\bar \eta))$. Let us take positive integer $N$ such that the codimension $2$ cycle $N\cdot \Delta (V_{\bar \eta})$
on $X\otimes _\CC k(\bar \eta)$ is rationally equivalent to zero.
Let us consider the codimension $2$ cycle  $N\cdot \Delta (V)$ in $\CH^2 (X\times S)$. Since by \cite[Lemma 1A.1]{Bl}
$$\CH^2 (X _K)\simeq \varinjlim _{U\subset S \, \, \rm{open}} \CH^2 (X \times U),$$
there exists a nonempty Zariski open $U\subset S$ such that the restriction of $N\cdot \Delta (V)$ vanishes in $\CH^2 (X \times U)$.

Let us fix a point $s\in S$. We can find a smooth variety $T$, a morphism $g: T\to S$ and a point $t\in T$ mapping to $s$ such that 
$k(s)\to k(t)$ is an isomorphism, $T$ has dimension dimension  $1$ higher than the closure of $s$ and the image of $g$ intersects $U$. Let  $(V', \nabla _{V'})= g^*(V, \nabla)$, $U'= g^{-1} (U)$ and let
$Z$ be the  closure  of $t$ in $T$.
After removing a few divisors from $T$ we can assume that $Z$ is the only codimension $1$ component of $T-(U'\cap T) $. Let $i$ denote the embedding $X\times Z\hookrightarrow X\times T$ and $j$ the embedding $X \times (U'\cap T)\hookrightarrow X\times T$. We have an exact sequence 
$$\CH^1 (X\times Z)\mathop{\longrightarrow}^{i_*}  \CH^2 (X\times T )\mathop{\longrightarrow}^{j^*}  \CH^2 (X \times (U'\cap T))\longrightarrow 0,$$  
where $N\cdot  \Delta (V'|_{X\times T})$ is mapped by $j^*$ to zero. Since $i^*i_*$ is the product with $c_1  (\cO _{X\times T} (X\times Z))$, we see that after restricting $i^*(N\cdot  \Delta (V'))$ to $X\times t$
we get $0$. So  $N\cdot  \Delta (V'_t)=0$ in $\CH ^2 (X\otimes _{\CC} k(t))$. 
  Since this is the same as  $N\cdot  \Delta (V_s)=0$, we get our claim.
\end{proof}

\begin{example}\label{counterexample}
	Here we show a smooth complex projective surface and a rank $2$ unitary representation of $\pi _1 (X^{\an},x)$ such that 
	$\Delta (V)$ is not torsion in $\CH^2 (X)$ for the associated algebraic flat vector bundle $(V, \nabla)$. This gives also an example of a slope semistable vector bundle $E$  with  $\int _X \Delta (E)=0$ for which no multiple of $\Delta (E)$ is a class of an effective $0$-cycle.
	
	Let us start with a general set up. Let $X$ be a smooth complex projective variety of dimension $\ge 2$ and assume there exists line bundles $L_1, L_2\in \Pic ^0 (X)$ such that the self-intersection of $(L_1-L_2)$ is non-zero in $\CH^2 (X) _{\QQ }$. Then $E=L_1\oplus L_2$ is slope semistable with respect to any ample polarization. In fact,  $E$ corresponds to a direct sum of rank $1$ unitary representations of the fundamental group $\pi _1 (X,x)$ (here it can be seen directly as every line bundle in $\Pic^0 (X)$ admits an algebraic connection). Since $\Delta (E)=-(L_1-L_2)^2$, we get a rank $2$ unitary representation $\rho: \pi _1 (X^{\an},x)\to \GL(2, \CC )$ such that $\Delta (V_{\rho})=\Delta (E)$ is not torsion in $\CH^2 (X)$. 
	 
	Now let us show explicit examples of such varieties. Let $X$ be the Fano surface of lines in a smooth cubic threefold in $\PP^4 _{\CC}$. It is 	a smooth projective surface of general type with $p_g (X)>0$. The intersection of divisors gives a pairing
	$$\Pic ^0(X) \otimes _{\ZZ} \Pic ^0(X) \to T(X)= \ker ({\CH^2(X)}_0\to \Alb (X)),$$
	where $\CH^2(X)_0$ is the subgroup of $\CH^2(X)$ corresponding to cycles  algebraically equivalent to $0$.
	In this example the above map is surjective (see \cite[Lecture 1, Example 1.7]{Bl})	and $T(X)$ is a non-zero torsion free, divisible group (see  \cite[Lecture 1, Lemma 1.4]{Bl}, \cite[Lecture 1, Theorem 1A.6]{Bl} and \cite[p.~255]{Ja1}). Therefore there exists $L\in \Pic ^0(X)$ such that $L^2\ne 0$ is non-zero in $T(X)$. Taking $L_1=L$ and $L_2=L^{-1}$ we see that $\Delta (E)$ is a degree $0$ class, which is not torsion in $\CH^2(X)$.

	Another example of a very similar type can be recovered from \cite{Iy}. Namely, we can take as $X$ a complex abelian surface. By \cite[Theorem 3.9]{Iy} for a very general line bundle $L\in \Pic ^0(X)$ the self-intersection $L^2$ is a non-zero element of the arithmetic Deligne cohomology $H^4_{\cA \cD}(X, \QQ (2))$. In particular, $L^2$ does not vanish in $\CH ^2(X)_{\QQ}$ and as before we can take $E=L\oplus L^{-1}$. In fact, \cite[Theorems 1.1 and 1.2]{Iy} give many examples of a similar type (but in all these examples the vector bundle $E$ is reducible, so in particular it is not stable).
\end{example}

\medskip

\begin{remark}
	Let $X$ be a smooth projective surface and let $H$ be some ample divisor on $X$.
	Assume that $E$ is a slope $H$-stable vector bundle on $X$. One can ask if $\Delta( E)$ is 
	a class of an effective $0$-cycle. If $\int_X \Delta (E)=0 $ and there exists 
	a generically finite proper covering  $f: Y\to X$ such that $f^*E$ is not slope $f^*H$-stable then the above example shows that we cannot expect that $\Delta (E)$ is torsion in $\CH^2 (X)$. 
\end{remark}

\begin{remark}
	The above example shows that Theorem \ref{rk2-families-torsion} is surprising even if $X$ is projective. Namely,
let us recall that flat vector bundles with monodromy group containing $\SL(2, \CC)$ form an analytically open subset in $M_{DR} (X, 2)$. This follows by applying Proposition \ref{density} to $G=\PGL(2)$ and then applying the Riemann--Hilbert correspondence (\cite[Proposition 7.8]{Si}).
Theorem \ref{rk2-families-torsion} shows that the irreducible components of the de Rham moduli space $M_{DR} (X, 2)$ that contain flat vector bundles with monodromy group containing $\SL(2, \CC)$ do not contain points that correspond to flat vector bundles, whose underlying vector bundle is an extension of $L$ by $L^{-1}$ for some $L\in  \Pic^0 (X)$ such that $L^2\ne 0$ in $\CH^2 (X)$. 
\end{remark}

Note that Lemma \ref{Lie-irred-lemma} and	 Proposition \ref{rk2-conj-torsion} suggest the following conjecture:

\begin{conjecture}\label{too-optimistic-MIC}
	Let $X$ be a smooth complex quasi-projective variety.
	Let  $\rho: \pi_1 (X^{\an},x)\to \GL(r, \CC)$ be a Lie irreducible representation with quasi-unipotent monodromy at infinity. Then for all $i\ge 2$ we have 
	$r^i c_i(V_{\rho}) = \binom{r}{i} c_1(V_{\rho})^i$ in $\CH ^i (X)_{\QQ}$. In particular, if $\rho (\pi_1 (X^{\an},x))\subset \SL(r, \CC)$
	then $c_i(V_{\rho})$ is torsion in $\CH^i (X)$ for all $i\ge 2$. 
\end{conjecture}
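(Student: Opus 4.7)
The conjecture naturally divides into a \emph{rigid} and a \emph{non-rigid} part, and the plan is to reduce both to ingredients already present in the paper.

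\textbf{Reduction to the $\SL(r,\CC)$ case.} The expression $r^i c_i - \binom{r}{i}c_1^i$ is invariant under twisting by a flat line bundle and, with $\QQ$-coefficients, depends only on the associated $\PGL(r)$-bundle. Using \cite[Lemma 2.6]{LS} as in the proof of Theorem \ref{Reznikov-analogue-rk2}, I would pass to a finite \'etale covering $f\colon \tilde X\to X$ and an $r$th root of $\det\rho$ to replace $\rho$ by an $\SL(r,\CC)$-valued representation $\tilde\rho$ that remains Lie irreducible with quasi-unipotent monodromy at infinity. Since $f^*\colon\CH^i(X)_{\QQ}\to\CH^i(\tilde X)_{\QQ}$ is injective, the task reduces to showing that $c_i(V_{\tilde\rho})$ is torsion in $\CH^i(\tilde X)$ for all $i\ge 2$.

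\textbf{Rigid case.} Here the plan is to invoke (conjecturally) Conjecture \ref{rigid-conjecture}, following the model of Proposition \ref{rk2-conj-torsion}. By Theorem \ref{rigid-VHS} a properly rigid Lie irreducible $\tilde\rho$ underlies a complex variation of Hodge structure; granting Simpson's motivicity conjecture, such $\tilde\rho$ should be pulled back via some $g\colon\tilde X\to \cM$ from a tautological representation on an appropriate Shimura-type parameter stack $\cM$. Proposition \ref{rigid} then gives the vanishing of the Deligne--Beilinson Chern classes $c_i^{D}$ of the associated automorphic flat bundle on $\cM$, and a suitable generalization of Conjecture \ref{Shimura-conj} upgrades this to torsionness in $\CH^i(\cM)_{\QQ}$. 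Naturality of Chern classes under $g$ then transfers the conclusion to $\tilde X$.

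\textbf{Non-rigid case.} Here I would imitate the proof of Theorem \ref{rk2-families-torsion}. A non-rigid $\tilde\rho$ lies in a positive-dimensional deformation $(V,\nabla)$ on $\tilde X\times S/S$ with smooth $S$. Combining Lemma \ref{surjectivity-after-extension} with Gabber's Theorem \ref{Gabber} and successive specialization, the algebraic monodromy group at the generic geometric point $\bar\eta$ has dimension at least that at the base point, so Lie irreducibility, and in fact the full Zariski closure, is inherited generically (after a finite base change as in the proof of Theorem \ref{rk2-families-torsion}). Thus $V_{\bar\eta}$ is Lie irreducible over $k(\bar\eta)$; if it is rigid one is reduced to the previous step applied on $\tilde X\otimes k(\bar\eta)$, otherwise one iterates the deformation argument in a smaller-dimensional parameter. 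The identity then holds generically, and the spreading-out/excision argument at the end of Theorem \ref{rk2-families-torsion}, together with Lemma \ref{Marc-Levine} for uniform torsion order, propagates it to every closed $s\in S$ and in particular to the original $\tilde\rho$.

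\textbf{Main obstacle.} The bottleneck is the rigid step: beyond the conjectural Bloch--Beilinson input, one needs a higher-rank analogue of \cite[Theorem 11.2]{CS} that realizes an arbitrary properly rigid Lie irreducible representation as a pullback of a tautological local system on an explicit parameter stack. For reductive subgroups of $\SL(r,\CC)$ acting irreducibly other than those covered by polydisk Shimura stacks, such a factorization is not presently available and seems to be the genuine obstruction. A secondary technical point is that algebraic monodromy groups are not constructible in families (cf.\ the remarks preceding the definition of a good family), so the generic-fibre argument in the non-rigid case must be executed after a careful base change as in Theorem \ref{rk2-families-torsion}.
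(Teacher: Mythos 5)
The statement you are asked about is Conjecture~\ref{too-optimistic-MIC}: the paper does not prove it, explicitly describes it as probably ``too optimistic'', and only establishes partial results in Subsection~\ref{higher-rank-case}. Your proposal is accordingly not a proof but a conditional strategy, and you are right to flag the rigid step as the genuine obstruction; however, there are two further concrete gaps even in the parts you treat as settled. First, in the non-rigid case the tool actually available in the paper is Zuo's factorization (Proposition~\ref{Zuo} and Proposition~\ref{rk-r-one}): a non-properly-rigid representation with connected almost simple monodromy group $G$ factors, after an alteration, through a variety of dimension at most $\rk G$, which kills classes only in degrees $i>\rk G$. For $G$ containing $\SL(r,\CC)$ this gives control of $\Delta_r$ and other degree-$r$ combinations, but says nothing about $c_i$ for $2\le i<r$, whereas the conjecture demands all $i\ge 2$. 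Your claim that the non-rigid case reduces to the rigid one by iterating deformations does not close this gap, since the identity $r^i c_i=\binom{r}{i}c_1^i$ in intermediate degrees is simply not reached by the factorization argument. Second, Lie irreducibility only says that $V$ is simple over $M_{\rm geom}(\rho)^0$; it does not force the monodromy group to contain $\SL(r,\CC)$ (it could be an irreducibly acting symplectic, orthogonal, or other reductive subgroup), so the reduction you perform and the appeal to Proposition~\ref{rk-r-one} cover only a special case of the hypothesis of the conjecture. For monodromy groups of small rank relative to $r$ the factorization does give more degrees, but in general the intermediate Chern classes remain untouched.

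On the rigid side, your own diagnosis is essentially the one implicit in the paper: beyond Conjecture~\ref{Shimura-conj} one needs a higher-rank analogue of the Corlette--Simpson classification realizing properly rigid Lie irreducible representations as pullbacks of tautological local systems on explicit arithmetic parameter stacks, and this is only available in ranks $2$ and $3$ (Propositions~\ref{rk2-conj-torsion} and~\ref{rk3-conj-torsion}, the latter via \cite{LS} and with an additional integrality hypothesis you do not mention). So the correct conclusion is not that the conjecture admits a proof along these lines, but that your outline reproduces the paper's partial results (Proposition~\ref{rk-r-one} and Theorem~\ref{rk-r-families-torsion} for the top-degree classes in the non-rigid case, and the conditional low-rank rigid results) while the full statement, in particular the intermediate-degree identities and the general rigid case, remains open.
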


It is easy to see that the second part of the conjecture is in fact equivalent to the first one. In general, it seems that this conjecture is too optimistic, but we give some partial results in the higher rank in the next subsection. For this reason we formulate it only for a single flat vector bundle instead of a family of such bundles as one could conjecture in view of Theorem \ref{rk2-families-torsion}.

\subsection{Variation of monodromy groups} \label{variation}
	
In this subsection we study variation of geometric monodromy groups	in families of rank $2$ flat vector bundles.
A standard example showing that in general we cannot expect good behaviour is the following (we recall it after \cite[Remark 2.4.4]{Ka}). Let $X=\CC ^*$  and $S=\AA ^1_{\CC}$. On $X\times S$ with coordinates $(x,t)$ we consider the trivial line bundle $V=\cO_{X\times S}$ with the relative connection $\nabla$ given by $\nabla (xd/dx) =xd/dx +t$. For any $t\not \in \QQ$ the algebraic monodromy group is equal to $\GG_m$ and for $t=p/q\in \QQ$ with $(p,q)=1$ it is equal to $\mu _q$. Note that in this example projectivization of the monodromy group is better behaved (which is not  surprising  as it is trivial in the rank one case). In view of Proposition \ref{density} under some assumptions one should get a  better behaviour that we analyze in the rank $2$ case.
More precisely, we prove the following result:

\begin{proposition}
	Let $X$ be a smooth complex quasi-projective variety. Let $(V, \nabla)$ be a  good family of rank $2$ vector bundles with integrable connection parametrized by an irreducible complex variety $S$. Assume that for some $s_0\in S (\CC)$ the geometric monodromy group $M_{\rm geom} (V_{s_0}, \nabla _{s_0})$ contains $\SL(2, \CC)$. Then there exists an open analytic subset $W\subset S^{\an}$ containing $s_0$ such that the following conditions are satisfied:
	\begin{enumerate}
		\item for all $s\in \overline W\subset S^{\an}$
		the projectivization of the monodromy of $(V_s, \nabla _s)$  at infinity lie in the same closures of conjugacy classes of matrices in $\PGL(2, \CC)$,
		\item for all $s\in \overline W\subset S^{\an}$ the monodromy representation of $(V_s, \nabla _s)$ factors through a hyperbolic Deligne--Mumford curve,
		\item  for all $s\in W$ the geometric monodromy group of $(V_s, \nabla _s)$ contains $\SL(2, \CC)$. 
	\end{enumerate} 
\end{proposition}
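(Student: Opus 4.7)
The idea is to transfer the problem to the analytic Betti side via the map $\varphi\colon S^{\an}\to M_B(X,2)^{\an}$ supplied by Corollary~\ref{moduli-map-for-de_Rham-families} and combine Proposition~\ref{density} with \cite[Theorem~1]{CS}. Write $\bar\varphi\colon S^{\an}\to M_B(X,\PGL(2))^{\an}$ for the composition of $\varphi$ with the projection induced by $\GL(2)\to\PGL(2)$.

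\emph{Construction of $W$ and condition $(3)$.} The hypothesis that $M_{\rm geom}(V_{s_0},\nabla_{s_0})$ contains $\SL(2,\CC)$ is equivalent to the Zariski density of the projectivized monodromy $\bar\rho_{s_0}$ in $\PGL(2,\CC)$. Applying Proposition~\ref{density} to the connected semisimple group $\PGL(2)$ yields a Zariski open $R_{\rm Zd}\subset R(\pi_1(X^{\an},x),\PGL(2))$ of Zariski-dense representations, whose (GIT) image $\Omega\subset M_B(X,\PGL(2))$ is Zariski open and contains $\bar\varphi(s_0)$. I would take $W_0=\bar\varphi^{-1}(\Omega^{\an})\cap U^{\an}$, where $U\subset S$ is the Zariski open locus from the good-family assumption. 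For any $s\in W_0$, the Zariski closure of $\rho_s(\pi_1(X^{\an},x))$ in $\GL(2,\CC)$ surjects onto $\PGL(2,\CC)$; a closed subgroup of $\GL(2,\CC)$ with this property has identity component containing $\SL(2,\CC)$ (its intersection with $\SL(2,\CC)$ is at least $3$-dimensional and maps onto the simple group $\PGL(2,\CC)$), giving condition~$(3)$ on $W_0$.

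\emph{Condition $(1)$.} I next choose a relatively compact connected analytic neighborhood $W\subset W_0$ of $s_0$ with $\overline W\subset W_0$. Locally on $\overline W$ one can trivialize $i_x^\ast V$, so by Proposition~\ref{map-for-de_Rham-families} each map $s\mapsto\rho_s(\gamma_i)$ is analytic into $\GL(2,\CC)$, and its composition with $\GL(2,\CC)\to\PGL(2,\CC)$ is a well defined analytic map on $\overline W$ independent of the trivialization. Since $\overline W\subset U^{\an}$, every $\rho_s(\gamma_i)$ is quasi-unipotent; its projection to $\PGL(2,\CC)$ therefore lies in one of a countable list of conjugacy classes, namely the identity, the non-trivial unipotent class, and the elliptic classes of finite order. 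The analytic function $t(g)=\Tr(g)^2/\det(g)$ descends to $\PGL(2,\CC)$ and on these classes assumes only the countably many values $\zeta+\zeta^{-1}+2$ with $\zeta$ a root of unity, so by connectedness of $\overline W$ the function $s\mapsto t(\bar\rho_s(\gamma_i))$ is constant; a short case analysis according to whether this common value equals $4$ or not then shows that $\bar\rho_s(\gamma_i)$ remains in the closure of a single $\PGL(2,\CC)$-conjugacy class $C_i$ throughout $\overline W$.

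\emph{Condition $(2)$ and the main obstacle.} For $s\in W$, $\bar\rho_s$ is Zariski dense, so \cite[Theorem~1]{CS}, applied exactly as in the proofs of Propositions~\ref{rk2-conj-torsion} and \ref{rk2-families-torsion}, forces $\bar\rho_s$ either to be rigid or to projectively factor through a fibration $f_s\colon X\to C_s$ onto a hyperbolic Deligne--Mumford orbicurve. Rigid classes form an analytically discrete subset of $M_B(X,\PGL(2),\{C_i\})$, so after shrinking $W$ I may arrange that they are avoided. The main technical obstacle is that the fibration $f_s$ a priori depends on $s$: I would argue that on a sufficiently small connected $W$ the orbicurve $C_s$ can be taken constant, because the construction producing $f_s$ in \cite[Theorem~1]{CS} is of Shafarevich type and depends on the underlying geometry of $X$, so there are only countably many candidate fibrations $X\to C$, and the locus of representations projectively factoring through a fixed $f^\ast\colon\pi_1(X^{\an},x)\to\pi_1^{\orb}(C,f(x))$ is an analytically closed subscheme of the character scheme. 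Constancy of the fibration on $W$ then extends to $\overline W$ by closedness, yielding $(2)$. Pinning down this uniformization of the factoring orbicurve across the family---so that the discrete choice of $f$ really is forced to be locally constant near $s_0$---is the step that demands the most care.
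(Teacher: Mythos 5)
Your treatment of conditions (1) and (3) is essentially the paper's: the same trace invariant (the paper uses $f(A)=(\Tr\bar A)^2$ for an $\SL_2$-lift, which is your $t$), the same countable-value/connectedness argument, and Proposition \ref{density} for Zariski density. One small difference: the paper works throughout on the representation scheme $R(X,x,\PGL(2))$ rather than on $M_B(X,\PGL(2))$, which sidesteps your unproved claim that the GIT image $\Omega$ of $R_{\rm Zd}$ is Zariski open in the quotient (this is true because Zariski-dense representations are irreducible, hence stable, but it needs to be said).

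The genuine gap is in condition (2), precisely at the step you flag. First, ``rigid classes form a discrete subset, so after shrinking $W$ I may arrange that they are avoided'' does not work: if $\bar\varphi(s_0)$ itself corresponds to a rigid class (or if the family is isomonodromic, so that $\bar\varphi$ is constant near $s_0$), no shrinking of a neighbourhood of $s_0$ avoids it, and your argument then produces no factorization at $s_0$ at all. Second, your fallback for making the fibration locally constant --- countably many candidate fibrations plus closedness of each factoring locus --- is weaker than what is needed and is not how the paper proceeds. The paper's mechanism is: by \cite[Proposition 2.8]{CS} there are only \emph{finitely} many maps from $X$ to hyperbolic orbicurves, so the locus of representations factoring through some hyperbolic orbicurve is Zariski \emph{closed} in $R(X,x,\PGL(2))$ (Lemma \ref{closedness-of-orbicurve-factorization}); inside the relative representation scheme $R(X,x,\PGL(2),\{C_i\})$ one then has two Zariski open sets --- $U_1$ (Zariski-dense representations) and $U_2$ (non-factoring representations) --- which are shown to be disjoint, so the irreducible component $T$ containing $\varphi_0(s_0)$, which meets $U_1$, cannot meet $U_2$; hence \emph{every} point of $T$ factors through a hyperbolic orbicurve, and $W=\varphi_0^{-1}((T\cap U_1)^{\an})$ satisfies (2) on its closure for free. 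Your metric ball $\overline W\subset W_0$ has no relation to this algebraic component structure, so even granting a Baire-type argument to pick out one fibration on $W$, the rigid points and the passage to $\overline W$ are not controlled. To repair the proposal you would need to replace the ``shrink to avoid rigid points'' step by the disjointness of the Zariski-dense and non-factoring loci on the irreducible component through $\varphi_0(s_0)$.
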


\begin{proof}

The following lemma is an analogue of \cite[Corollary 6.7]{CS} in case of projective representations. Since a projective representation $\rho: \pi_1 (X, x)\to \PGL(2, \CC)$ does not need to lift to $\pi_1 (X, x)\to \SL(2, \CC)$  (or even to $\pi_1 (X, x)\to \GL(2, \CC)$), we need to check that a similar proof works also in this case.

\begin{lemma} \label{closedness-of-orbicurve-factorization}
	There exists a Zariski open subset $U\subset R(X, x, \PGL (2))$ such that $\rho: \pi_1 (X, x)\to \PGL(2, \CC)$ does not factor through a hyperbolic orbicurve if and only if $\rho \in U (\CC)$.
\end{lemma}

\begin{proof}
	It is sufficient to show that the set of representations $\rho \in R(X, x, \PGL (2)) (\CC)$ such that $\rho$ factors through a hyperbolic orbicurve is closed. But by \cite[Proposition 2.8]{CS} there exists only a finite number of isomorphism classes of maps from $X$ to a hyperbolic orbicurve. So the required assertion follows from the fact that the set of representations  $\rho \in R(X, x, \PGL (2)) (\CC)$	which factor a given map to an orbicurve is closed.
\end{proof}

The following lemma is an analogue of \cite[Lemma 3.4]{CS} in case of projective representations.

\begin{lemma} \label{hyperbolic-factorization}
	If $\rho: \pi_1 (X, x)\to \PGL(2, \CC)$ is Zariski dense and it factors through an orbicurve $f: X\to C$ then $C$ is hyperbolic.
\end{lemma}

\begin{proof}
	Spherical and elliptic orbicurves have virtually abelian fundamental groups (i.e., they have an abelian subgroup of finite index; in case of spherical orbicurves this subgroup is in fact trivial). But such groups have no Zariski dense representations in  $ \PGL(2, \CC)$ and by assumption $\rho$ factors through the fundamental group of $C$, so $C$ must be hyperbolic.
\end{proof}

Let us consider the canonical map $\SL(2, \CC) \to \PGL(2, \CC)$. Each element $A\in \PGL(2, \CC)$ can be lifted to $\bar A\in \SL(2, \CC)$, which is well defined up to multiplication by $-1$. Hence $f(A):=(\Tr \bar A)^2$ is a well defined invariant of $A$.
Note that $A$ is quasi-unipotent if and only if we can choose $\bar A$ to be conjugate to one of the following matrices:
\begin{enumerate}
	\item $
	\left(\begin{matrix}
		1&0\\
		0&1\\
	\end{matrix}\right)
	$,
	\item $
	\left(\begin{matrix}
		1&1\\
		0&1\\
	\end{matrix}\right)
	$,
	\item $
	\left(\begin{matrix}
		\alpha&0\\
		0&\alpha^{-1}\\
	\end{matrix}\right)
	$, where $\alpha \ne \pm 1$ and $\alpha ^n=1$ for some positive integer $n$.
\end{enumerate}
We call closures of conjugacy classes of elements of type 2 and 3 ``large''.
One can easily see that the closure of the conjugacy class of the second matrix contains the first one. Moreover, an elementary calculation shows that $f(A)$ takes different values on large closures of conjugacy classes.

Let us fix a smooth projective compactification $\bar X$ of $X$ such that $D=\bar X-X$ is a normal crossing divisor. For each irreducible component $D_i$ of $D$ let us choose $\gamma _i\in \pi_1 (X^{\an},x)$ represented by a simple loop around that component.  Let $\rho _0:  \pi_1 (X, x)\to \PGL(2, \CC)$ denote the composition of the monodromy representation of 
$ (V_{s_0}, \nabla _{s_0})$ with the canonical quotient $\GL(2, \CC) \to \PGL(2, \CC)$. Let $C_i$ denote the largest closure of the conjugacy class of a quasi-unipotent matrix that contains $\rho _{0} (\gamma _i)$ (since  $\rho _{0} (\gamma _i)$ is quasi-unipotent, by the above it is well-defined).

As in the proof of Corollary \ref{moduli-map-for-de_Rham-families} we can find a Zariski open neighbourhood  $U_0\subset S$ of $s_0$
and an analytic morphism  $\varphi _0 : U_0^{\an}\to R(X,x, \PGL(2))^{\an}$ such that for every $s\in U_0(\CC)$ the point $\varphi _{0} (s)$ corresponds to the projectivization of the monodromy representation $\rho _s$ of $(V_s, \nabla _s)$.

We claim that $\varphi _0(U_0^{\an})\subset  R(X, x, \PGL(2) , \{C_i\})^{\an}$. 
To see that consider $f_i: R(X, x, \PGL(2))^{\an} \to \CC$ defined by $f_i (\rho)= f(\rho (\gamma _i))$. 
By assumption for every $s \in U_0^{\an}$ the elements $(\varphi _0 (s)) (\gamma _i)$ are quasi-unipotent. Since $f_i\circ \varphi_0$
is continuous, $U_0 ^{\an}$ is pathwise-connected and images by $f$ of two distinct large conjugacy classes are not path-connected by images of quasi-unipotent matrices, we see that $f_i\circ \varphi_0 (U_0 ^{\an})$ is a point for each $i$. This implies our claim and it gives an induced map $U_0^{\an}\to R(X,x, \PGL(2),  \{C_i\})$ that we also denote by $\varphi _0$.

By Proposition \ref{density} there exists an open subset $U_1\subset  R(X,x, \PGL(2),  \{C_i\})$ corresponding to representations, whose image is Zariski dense in $\PGL (2, \CC)$. By assumption we have $\varphi _0 (s _0) \in U_1(\CC)$.
Let us set $U_2= U\cap  R(X,x, \PGL(2),  \{C_i\})$, where $U$ is as in Lemma \ref{closedness-of-orbicurve-factorization}.

Assume that  $\rho \in U_1(\CC)\cap U_2(\CC)$. Since $\rho $ is Zariski dense, by Lemma \ref{hyperbolic-factorization} it does not factor through a map of map from $X$ to an orbicurve. But by assumption $\rho $ is not rigid, so we get 
a contradiction with \cite[Theorem 6.8]{CS}.  Therefore $U_1\cap U_2=\emptyset$.

Let $T$ be an irreducible component of $R(X,x, \PGL(2),  \{C_i\})$ containing $\varphi _0 (s _0)$.
Since $\varphi _0 (s _0) \in U_1(\CC)$ and $T$ is irreducible we must have $T\cap U_2=\emptyset$. It follows that every $\CC$-point of $T$ corresponds to a representation that factors through a hyperbolic orbicurve
and as $W$ we can take $\varphi _0^{-1} ((T\cap U_1)^{\an})$.
\end{proof}

\begin{remark}
Note that although $\overline W$ corresponds to an irreducible component of $R(X,x, \PGL(2),  \{C_i\})$, we cannot claim that the assertions of the proposition hold for all closed points of $S$. If this were true Theorem \ref{rk2-families-torsion} would follow from Proposition \ref{rk2-conj-torsion} and Lemma \ref{Marc-Levine}.
\end{remark}

\subsection{Chern classes of higher rank flat vector bundles in the Chow ring} \label{higher-rank-case}

Let $E$ be a rank $r>0$ coherent sheaf on a smooth algebraic variety $X/k$.
We can write
$$\log (\ch (E))=\log r+\sum _{i\ge 1} (-1)^{i+1}\frac{1}{i!r^i}\Delta_i(E)$$
for some classes $\Delta_i(E)\in \CH^{i}(X) _{\QQ}$. In fact, these classes are 
polynomials in Chern classes of $E$ with integral coefficients, so we can consider
them in $\CH^i (X)$. These classes are called \emph{higher discriminants} of $E$
and to the author's knowledge they were first considered by Drezet in \cite{Dr}, although with a different normalization with rational coefficients.
It is easy to see that for $i\ge 2$ these classes are invariant under tensoring with a line bundle, i.e.,
for any line bundle $L$ we have $\Delta_i(E\otimes L)=\Delta_i(E)$.  This follows
immediately from the fact that
$$\log (\ch (E\otimes L))= \log (\ch (E) \cdot \ch(L)) =\log (\ch (E))+c_1(L).$$

\medskip

Let us recall that a linear algebraic group is called \emph{almost simple} if all its proper normal Zariski closed subgroups are finite. Such group is necessarily connected and semi-simple.

\begin{proposition} \label{Zuo}
	Let $X$ be a smooth complex projective variety. Let  $\rho: \pi_1 (X^{\an},x)\to \GL(r, \CC)$ be a representation, whose 
	geometric monodromy group $G= M_{\rm geom}(\rho)$ is connected and almost simple. If $\rho$ is not properly rigid then for all $i> \rk G$ we have 
	$c_i(V_{\rho}) = 0$ 
	in $\CH ^i (X)_{\QQ}$. Moreover, all combinations of Chern classes of $V_{\rho}$ that live in 	 
	$\CH ^i (X)_{\QQ}$ vanish for $i> \rk G$.
\end{proposition}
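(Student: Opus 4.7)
The strategy is to exploit the non-rigidity hypothesis to embed $(V_\rho, \nabla_\rho)$ in a non-trivial family of flat bundles and then apply a specialization argument in the spirit of the proof of Theorem \ref{rk2-families-torsion}, combined with the rank constraint on $G$.

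First, since $G$ is connected and almost simple, the image of the determinant character $\det\colon G\to\Gm$ is a connected almost simple subgroup of $\Gm$, hence trivial. Therefore $G\subset\SL(r)$, the flat line bundle $\det V_\rho$ has trivial monodromy, and $c_1(V_\rho)=0$ in $\CH^1(X)$; in particular every polynomial in the Chern classes of $V_\rho$ reduces to a polynomial in $c_2(V_\rho),\ldots,c_r(V_\rho)$, and the characteristic classes of $V_\rho$ factor through characteristic classes of the adjoint $G/Z(G)$-bundle.

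Next, non-proper-rigidity of $\rho$ says that $[\tilde\rho]\in M_B(X,G,\{C_i\})$ is non-isolated for the appropriate closures of conjugacy classes $C_i$. Choose a smooth curve through $[\tilde\rho]$ in a positive-dimensional irreducible component, algebraize via Corollary \ref{moduli-map-for-de_Rham-families}, and resolve singularities to produce a smooth irreducible complex variety $S$, a point $s_0\in S(\CC)$ mapping to $[\tilde\rho]$, and a non-constant family $(V,\nabla)$ of flat rank-$r$ bundles on $X\times S/S$ with $(V_{s_0},\nabla_{s_0})\cong(V_\rho,\nabla_\rho)$. By Proposition \ref{density} applied to $G$, and Lemma \ref{connected-components-monodromy}, the geometric monodromy of $(V_s,\nabla_s)$ remains Zariski-dense in $G$ on a Zariski-open neighborhood of $s_0$; by Lemma \ref{surjectivity-after-extension} and Gabber's Theorem \ref{Gabber}, the algebraic monodromy at the geometric generic point $\bar\eta$ of $S$ also contains $G$ over $k(\bar\eta)$.

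Then I would adapt the specialization argument from Theorem \ref{rk2-families-torsion} to this general setting. The structural input is that $H^*(BG,\QQ)\cong\QQ[f_1,\ldots,f_\ell]$ is a polynomial ring on $\ell=\rk G$ generators, with Chern classes $c_i(V_\rho)$ arising as polynomials in the $f_j$. At $\bar\eta$, Zariski-density in the almost simple $G$ together with a Zuo-type rigidity argument (comparing the generic fibre to rigid specializations, which are controlled via Proposition \ref{rigid}) should force the basic characteristic classes $f_j$ of the $G$-bundle to be torsion in $\CH^*(X_{\bar\eta})_\QQ$. A Lemma \ref{Marc-Levine}-style argument then propagates the torsion with a uniform bound to every closed fibre, in particular to $s_0$.

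The main obstacle, and the most delicate point, is translating the torsion of the $\ell=\rk G$ basic characteristic classes into vanishing of every polynomial combination of Chern classes of Chow-degree $i>\rk G$. The bound $\rk G$ should emerge from a Chern--Weil-style identification: any Chow-degree $i$ monomial in $c_2(V_\rho),\ldots,c_r(V_\rho)$, pulled back from $H^*(B\GL(r),\QQ)$ to $\QQ[f_1,\ldots,f_{\rk G}]$, must for $i>\rk G$ involve at least one basic generator $f_j$ as a factor, hence be torsion. Making this degree count precise uniformly across the different almost simple types of $G$ is, I expect, the technical heart of the proof.
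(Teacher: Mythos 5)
There is a genuine gap at the heart of your argument. The paper's proof is a two-line application of Zuo's factorization theorem (\cite[Theorem 3]{Zu2}): since $\rho$ is Zariski dense in the connected almost simple group $G$ and is not (properly) rigid, there is an alteration $g\colon \tilde X\to X$ and a fibration $f\colon \tilde X\to Y$ with $\dim Y\le \rk G$ through which $g^*\rho$ factors. By functoriality of the Riemann--Hilbert correspondence (Lemma \ref{functorial-RH}), $g^*c_i(V_\rho)=f^*c_i(V_\tau)$, and this vanishes for $i>\dim Y$ simply because $\CH^i(Y)=0$ in those degrees; injectivity of $g^*$ on $\CH^*(X)_\QQ$ then gives the claim, and the same pullback argument handles arbitrary polynomial combinations. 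The bound $\rk G$ thus comes from the \emph{dimension of the factorizing base}, not from the rank of $H^*(BG,\QQ)$ as a polynomial ring. Your proposed mechanism --- showing that the $\rk G$ basic characteristic classes $f_j$ of the $G$-bundle are torsion in $\CH^*(X_{\bar\eta})_\QQ$ --- is both unproved and far too strong: if all the $f_j$ were torsion, then \emph{every} positive-degree characteristic class would be torsion (each monomial in a positive-degree polynomial in the $f_j$ contains some $f_j$ as a factor), which is essentially Bloch's conjecture for these bundles and is precisely what the paper does not claim; the restriction to $i>\rk G$ would then be vacuous. Relatedly, your closing "degree count" cannot work as stated: there is no reason a degree-$i$ monomial with $i>\rk G$ must involve a torsion factor unless the $f_j$ are already known to be torsion, in which case the count is unnecessary.

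Two smaller points. First, the family/specialization apparatus (Corollary \ref{moduli-map-for-de_Rham-families}, Gabber's theorem, Lemma \ref{Marc-Levine}) is not needed here: the proposition concerns a single representation, and non-rigidity is consumed entirely by Zuo's theorem rather than by a deformation argument; that machinery belongs to Theorems \ref{rk2-families-torsion} and \ref{rk-r-families-torsion}, which \emph{use} the present proposition at the generic geometric point. Second, your opening observation that $G\subset\SL(r)$ and $c_1(V_\rho)=0$ is correct but plays no role in the actual proof. If you replace the middle of your argument by an explicit appeal to \cite[Theorem 3]{Zu2} and the vanishing of $\CH^{i}(Y)$ for $i>\dim Y\le\rk G$, the proof closes.
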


\begin{proof}
	By \cite[Theorem 3]{Zu2} there exists an alteration $g: \tilde X\to X$ and a fibration $f: \tilde X\to Y$ such that $g^*\rho$ factors through $\tau: \pi_1 (Y^{\an},y)\to G$ and $\dim Y \le \rk G$. 
	By Lemma \ref{functorial-RH} we have in $\CH ^i (\tilde X)_{\QQ}$
	$$ g^*c_i (V_{\rho}) =c_i (V_{g^*\rho})= f^*c_i (V_{\tau})=0$$
	for $i> \dim Y$. This implies the required vanishings in $\CH ^i (X)_{\QQ}$. The last assertion follows by the same arguments.
\end{proof}

\begin{definition}
	We say that  a representation  $\rho: \pi_1 (X^{\an},x)\to \GL(r, \CC)$ is \emph{projectively rigid} if the 
 $\pi_1 (X^{\an},x)\to \PGL(r, \CC)$ induced from $\rho$  by  the group extension $\GL(r, \CC)\to \PGL(r, \CC)$ is rigid.
\end{definition}

\begin{proposition}\label{rk-r-one}
Let $X$ be a smooth complex projective variety. Let  $\rho: \pi_1 (X^{\an},x)\to \GL(r, \CC)$ be a representation, whose geometric monodromy group $M_{\rm geom}(\rho)$ contains $\SL (r, \CC)$. If $\rho$ is not projectively rigid then 
$\Delta _r(V_{\rho})$ is torsion in $\CH ^r (X)$.	
\end{proposition}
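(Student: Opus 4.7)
The plan is to reduce to Proposition \ref{Zuo} applied to $G = \SL (r, \CC)$, a connected almost simple group of rank $r-1$. Note that $r \geq 2$, since $\PGL (1)$ is trivial, making all rank-one representations projectively rigid. Because $\Delta_r$ is invariant under tensoring with line bundles (for $r \geq 2$) and finite pushforwards preserve torsion up to a factor of the degree, it suffices to exhibit a finite surjective $f: \tilde X \to X$ and a twist $\rho'$ of $f^*\rho$ by a character such that $\Delta_r (V_{\rho'})$ vanishes in $\CH^r (\tilde X)_{\QQ}$.

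First, I would mimic the argument in the proof of Theorem \ref{Reznikov-analogue-rk2}, applying \cite[Lemma 2.6]{LS} to the $r$-th power map $\CC^* \to \CC^*$ (with kernel $\mu_r$) to obtain a finite surjective morphism $f: \tilde X \to X$ from a smooth projective variety $\tilde X$ together with a rank-one representation $\tau : \pi_1 (\tilde X^{\an}, \tilde x) \to \CC^*$ satisfying $f^* (\det \rho) = \tau^r$. Setting $\rho' := f^*\rho \otimes \tau^{-1}$ gives $\det \rho' = 1$, so the image of $\rho'$ lies in $\SL (r, \CC)$. Using Lemma \ref{connected-components-monodromy} together with the fact that $\SL (r, \CC)$ is perfect (for $r\geq 2$), one checks that $M_{\rm geom}(\rho') = \SL (r, \CC)$; in particular the monodromy of $\rho'$ is reductive.

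The hard part will be verifying that $\rho'$ is not properly rigid. Since $\SL(r) \to \PGL(r)$ is a finite isogeny, the moduli spaces $M_B(\tilde X, \SL(r))$ and $M_B(\tilde X, \PGL (r))$ are étale-locally isomorphic around $\rho'$ and its projectivization $\bar \rho' := f^*\bar \rho$, so non-rigidity of $\rho'$ is equivalent to non-rigidity of $\bar \rho'$. By hypothesis $\bar \rho$ is not isolated in $M_B(X, \PGL(r))$, so there exist distinct classes $\bar \rho_1 \ne \bar \rho$ arbitrarily close to $\bar \rho$. The image of $\bar \rho$ is Zariski dense in a connected group containing $\PSL(r, \CC)$, whose centralizer in $\PGL(r, \CC)$ is trivial. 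Hence the only representation agreeing, up to conjugation, with $\bar \rho$ on the finite-index subgroup $\pi_1(\tilde X^{\an})$ is $\bar \rho$ itself, so $f^*\bar \rho_1 \ne f^*\bar \rho = \bar \rho'$ in $M_B(\tilde X, \PGL (r))$. This gives non-rigidity of $\bar \rho'$, hence of $\rho'$.

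Finally, Proposition \ref{Zuo} applied to $\rho'$ with $G = \SL(r, \CC)$ and $\rk G = r - 1 < r$ yields that every combination of Chern classes of $V_{\rho'}$ in $\CH^r (\tilde X)_{\QQ}$ vanishes; in particular $\Delta_r (V_{\rho'}) = 0$ in $\CH^r (\tilde X)_{\QQ}$. Invariance of $\Delta_r$ under twisting gives $f^* \Delta_r (V_\rho) = \Delta_r (V_{\rho'}) = 0$ in $\CH^r (\tilde X)_{\QQ}$, so $f^* \Delta_r (V_\rho)$ is torsion. Pushing forward along $f$ via $f_* f^* = \deg f \cdot \id$ shows that $\deg f \cdot \Delta_r (V_\rho)$ is torsion, hence $\Delta_r (V_\rho)$ is torsion in $\CH^r(X)$, as required.
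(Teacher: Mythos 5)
Your proof is correct and its skeleton --- twisting $f^*\rho$ into $\SL(r,\CC)$ via \cite[Lemma 2.6]{LS}, using invariance of $\Delta_r$ under line bundle twists, invoking Proposition \ref{Zuo}, and pushing forward along $f$ --- is exactly the paper's. The one place where you diverge is the last step: you apply Proposition \ref{Zuo} literally to the $\GL(r)$-valued representation $\rho'$, which forces you to show that $\rho'$ is not properly rigid as an $\SL(r,\CC)$-representation and hence to compare rigidity across the isogeny $\SL(r)\to\PGL(r)$. The paper sidesteps this: it observes that $\Delta_r(V_{\tilde\rho})$ is a characteristic class of the $\PGL(r,\CC)$-bundle associated to $P\tilde\rho=f^*(P\rho)$, whose geometric monodromy group is $\PGL(r,\CC)$ (connected, almost simple, of rank $r-1$) and whose non-rigidity comes straight from the hypothesis (both arguments still need, and essentially assert, that non-rigidity of $P\rho$ survives pullback along the finite cover $f$ --- your centralizer argument for this is the standard one). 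Your isogeny step is sound in substance: the obstruction to lifting a $\PGL(r)$-representation to $\SL(r)$ is locally constant and the set of lifts is a finite torsor under $\Hom(\pi_1,\mu_r)$, so deformations of $\bar\rho'$ do lift to deformations of $\rho'$; but the phrase ``\'etale-locally isomorphic moduli spaces'' claims more than you justify or need. The paper's route buys freedom from this bookkeeping, while yours has the minor advantage of invoking Proposition \ref{Zuo} exactly as stated rather than its implicit $\PGL(r)$ variant.
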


\begin{proof}
We consider the short exact sequence of groups
$$1\to \mu _r\to \CC^*\to \CC^*\to 1,$$
where the map $\CC^*\to \CC^*$ is given by $z\to z^r$. \cite[Lemma 2.6]{LS} shows that there exists a finite surjective morphism $\tilde X\to X$ of algebraic varieties 	and a rank $1$ representation $\tau : \pi_1 (\tilde X, \tilde x)\to \CC^*$ such that $f^*(\det \rho )=\tau ^r$.
Then the image of  $\tilde \rho:= f^*\rho\otimes \tau ^{-1}: \pi_1 (\tilde X, \tilde x) \to \GL(r,\CC)$ is contained in $ \SL (r, \CC)$.  Lemma \ref{functorial-RH} implies that $V_{\tilde \rho} \simeq f^* V_{\rho} \otimes V_{\tau} ^{-1}$, so 
$$\Delta _i (V_{\tilde \rho})= \Delta _i(f^*V_{\rho} )= f^* \Delta _i(V_{\rho} )$$
for $i\ge 2$. Since $f^*: \CH ^i (X) _{\QQ}\to \CH ^i (\tilde X) _{\QQ}$ is injective,  it is sufficient to prove that $\Delta _r(V_{\tilde \rho})$ is torsion in $\CH ^r (X)$. By Lemma \ref{connected-components-monodromy} 
$\tilde \rho: \pi_1 (\tilde X, \tilde x)\to
\SL (r, \CC)$ is Zariski dense. Since $\rho$ not projectively rigid, the induced representation
 $P\tilde \rho: \pi_1 (\tilde X, \tilde x)\to PGL (r, \CC)$ is not rigid. Since $\Delta _r(V_{\tilde \rho})$ 
is a characteristic class of  the $\PGL(r, \CC)$-bundle associated to $P\tilde \rho$,  the required assertion
follows from Proposition \ref{Zuo}.
\end{proof}

In the rank $3$ case we can also deal with rigid representations:

\begin{proposition} \label{rk3-conj-torsion}
	Assume that $X$ is projective. Let  $\rho: \pi_1 (X^{\an},x)\to \SL(3, \CC)$ be a rigid integral irreducible representation.  If Conjecture \ref{Shimura-conj} holds then for all $i\ge 1$ we have $c_i (V_{\rho})=0$ in  $\CH ^i (X) _{\QQ}$.
\end{proposition}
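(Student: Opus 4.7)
The plan mirrors that of Proposition \ref{rk2-conj-torsion}, combining a factorization of $\rho$ through a polydisk Shimura modular stack with Reznikov's theorem and Conjecture \ref{Shimura-conj}. The rank 3 and $\SL$ setting simplifies some aspects since the determinant is trivialised, but replaces the rank 2 orbicurve dichotomy by a direct appeal to a structure theorem for rank 3 rigid integral irreducible representations.

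First I would reduce the range of indices. Since the image of $\rho$ lies in $\SL(3,\CC)$, the flat line bundle $\det V_{\rho}$ is trivial, so $c_1(V_{\rho})=0$ already in $\CH^1(X)_{\QQ}$. Because $V_{\rho}$ has rank $3$, $c_i(V_{\rho})=0$ for $i>3$ for trivial reasons. It therefore suffices to show that $c_2(V_{\rho})=0$ and $c_3(V_{\rho})=0$ in $\CH^2(X)_{\QQ}$ and $\CH^3(X)_{\QQ}$ respectively. By Theorem \ref{rigid-VHS}, rigidity together with quasi-unipotent (here trivial) monodromy at infinity guarantees that $\rho$ underlies a complex variation of Hodge structure.

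Next, using that $\rho$ is rigid, integral, and irreducible, I would invoke a rank 3 analogue of \cite[Theorem 11.2]{CS}: there exists a polydisk Shimura modular stack $\cM$ in the sense of \cite[Section 9]{CS}, a morphism $f\colon X\to \cM$, and a tautological rank 3 local system $\WW$ on $\cM$ such that the monodromy of $f^*\WW$ is conjugate to $\rho$. The remaining argument is then formal. Applying Reznikov's theorem to the flat algebraic vector bundle associated to $\WW$ gives that its Deligne--Beilinson Chern classes $c_i^{D}$ vanish in $H^{2i}_{\cD}(\cM,\QQ(i))$ for $i\ge 2$. Since $\cM$ is defined over a number field, Conjecture \ref{Shimura-conj} then forces $c_i(V_{\WW})=0$ in $\CH^i(\cM)_{\QQ}$ for $i\ge 2$. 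Finally, naturality of Chern classes under $f$, together with Lemma \ref{functorial-RH} to identify $f^*(V_{\WW},\nabla_{\WW})$ with $(V_{\rho},\nabla_{\rho})$, yields $c_i(V_{\rho})=f^*c_i(V_{\WW})=0$ in $\CH^i(X)_{\QQ}$ for $i=2,3$, completing the argument.

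The hard step will be the second one, namely the Shimura stack factorization. In rank 2 this is supplied by the combination of \cite[Corollary 8.4]{CS} (geometric origin via families of abelian varieties) and \cite[Theorem 11.2]{CS}. In rank 3, the analogous structural picture for rigid integral irreducible representations is more delicate: one needs to combine the integrality hypothesis with a classification of complex variations of Hodge structure of rank 3 of the Zariski dense type, and then realise them as pullbacks from an arithmetic quotient. A secondary, essentially technical obstacle is ensuring that both Reznikov's theorem and the Bloch--Beilinson framework underlying Conjecture \ref{Shimura-conj} extend cleanly to the smooth algebraic Deligne--Mumford stack $\cM$, which is not a variety in the strict sense but whose rational Chow theory and Deligne cohomology behave in the expected functorial way.
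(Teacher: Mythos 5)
Your proposal follows essentially the same route as the paper, whose entire proof is the remark that the argument of Proposition \ref{rk2-conj-torsion} goes through once the rank $2$ structure theory of \cite{CS} is replaced by the rank $3$ classification of rigid integral irreducible $\SL(3,\CC)$-representations in \cite{LS}; the ``rank 3 analogue'' you flag as the hard step is precisely the main theorem of that reference, so it is an available input rather than something to be proved here. One small caveat: the classification in \cite{LS} still contains a branch in which $\rho$ projectively factors through an orbicurve, and that case must be disposed of by the same pullback-from-a-curve/adjoint-representation argument as in the rank $2$ proof rather than by the Shimura-stack factorization, so the dichotomy is not entirely eliminated as your write-up suggests.
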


\begin{proof}
	The proof is the same as that of Proposition \ref{rk2-conj-torsion}, except that instead of using the results
	of \cite{CS} we need to use \cite{LS}.
\end{proof}

\begin{theorem}\label{rk-r-families-torsion}
	Let $X$ be a smooth complex projective variety and let $S$ be an  irreducible complex variety.
	Let $(V, \nabla)$ be a  rank $r$ vector bundles with a relative integrable connection on $X\times S/ S$. 
	Assume that $(V_{\bar \eta}, \nabla_{\bar \eta})$, where $\bar \eta$ is the generic geometric point of $S$, is not projectively rigid and for some $s_0\in S (\CC)$	the geometric monodromy group $M_{\rm geom} (V_{s_0}, \nabla _{s_0})$ contains $\SL(n, \CC)$. Let us fix a collection of integers $\{i_j\}$ such that $i_j\ge 2$ and
	$\sum i_j=r$. Then there exists a positive integer $N$ such that for all $s\in S$ we have 
	$$N\cdot \prod _{j}\Delta _{i_j} (V_s)=0 $$ 
	in $\CH ^r (X\otimes _{\CC} k(s))$.	
\end{theorem}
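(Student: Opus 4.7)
The plan is to run the blueprint of Theorem \ref{rk2-families-torsion}, with one crucial substitution: in place of the Shimura-variety-based input (Proposition \ref{rk2-conj-torsion}, which required Conjecture \ref{Shimura-conj}) I would use the unconditional input of Proposition \ref{Zuo}, which is available precisely because of the projective non-rigidity hypothesis. The main obstacle is Step 1 below, where dimension counts for algebraic monodromy groups must be handled with care; the remaining steps are essentially a direct transcription of the rank-$2$ argument.

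Step 1 (generic monodromy is full). After desingularizing $S$, I would show $PM_{\alg}(V_{\bar \eta}, \nabla_{\bar \eta}, x_{\bar \eta}) = \PGL(r, k(\bar \eta))$. Invoke Lemma \ref{surjectivity-after-extension} to find a finite extension $K = k(\eta) \subset L \subset k(\bar \eta)$ making
$$M_{\alg}(V_{\bar \eta}, \nabla_{\bar \eta}, x_{\bar \eta}) \simeq M_{\alg}(V_L, \nabla_L, x_L) \otimes_L k(\bar \eta)$$
and choose a smooth alteration $S' \to S$ realizing $K \subset L$ together with a lift $s_0' \in S'(\CC)$ of $s_0$. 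Applying Theorem \ref{Gabber} successively along curves joining $\eta'$ to $s_0'$ to the adjoint flat bundle associated with $(V,\nabla)$ (whose monodromy factors through $\PGL(r) \hookrightarrow \GL(r^2-1)$ via the adjoint representation) then yields
$$\dim PM_{\alg}(V_{\eta'}, \nabla_{\eta'}, x) \ge \dim PM_{\alg}(V_{s_0}, \nabla_{s_0}, x) = \dim \PGL(r) = r^2 - 1,$$
hence $PM_{\alg}(V_{\eta'}, \nabla_{\eta'}, x) = \PGL(r, L)$, and the base change above then delivers the claim over $k(\bar \eta)$.

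Step 2 (generic-fiber vanishing). Following the twist used in Proposition \ref{rk-r-one}, pass to a finite cover $\tilde X \to X_{\bar \eta}$ and tensor $V_{\bar \eta}$ by an appropriate rank-one flat bundle so that the resulting flat bundle $(\tilde V, \tilde \nabla)$ has geometric monodromy contained in $\SL(r)$; Step 1 upgrades this inclusion to an equality. The projective non-rigidity of $(V_{\bar \eta}, \nabla_{\bar \eta})$ is preserved under finite pullback and line-bundle twist, and since $\SL(r)$ has finite center, this implies that the associated representation $\tilde \rho$ is not properly rigid. Proposition \ref{Zuo} --- applied after a Lefschetz-principle embedding $k(\bar \eta) \hookrightarrow \CC$ --- then forces every Chern-polynomial class of $\tilde V$ in codimension greater than $\rk \SL(r) = r - 1$ to vanish in $\CH^*(\tilde X)_\QQ$. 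Since $\sum_j i_j = r$ and each $\Delta_{i_j}$ with $i_j \ge 2$ is invariant under tensoring with line bundles, it follows that $\prod_j \Delta_{i_j}(V_{\bar \eta}) = 0$ in $\CH^r(X_{\bar \eta})_\QQ$.

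Step 3 (spread out and specialize). Clearing denominators produces a positive integer $N$ with $N \cdot \prod_j \Delta_{i_j}(V_{\bar \eta}) = 0$ in $\CH^r(X_{\bar \eta})$. Bloch's direct-limit lemma \cite[Lemma 1A.1]{Bl} then furnishes a dense Zariski open $U \subset S$ on which $N \cdot \prod_j \Delta_{i_j}(V)$ vanishes. For an arbitrary $s \in S$, pick a smooth curve $g: T \to S$ passing through a point $t \in T$ with $k(t) \simeq k(s)$ and meeting $U$, arrange that the closure $Z$ of $t$ is the only codimension-one component of $T \setminus g^{-1}(U)$, and combine the localization sequence
$$\CH^{r-1}(X \times Z) \xrightarrow{i_*} \CH^r(X \times T) \xrightarrow{j^*} \CH^r(X \times (g^{-1}(U)\cap T)) \to 0$$
with the self-intersection formula $i^* i_* = c_1(\mathcal{O}_{X \times T}(X \times Z)) \cdot (-)$ and the vanishing of this first Chern class upon restriction to $X \times \{t\}$ to conclude $N \cdot \prod_j \Delta_{i_j}(V_s) = 0$ in $\CH^r(X \otimes_\CC k(s))$, as required.
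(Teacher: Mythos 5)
Your proposal is correct and follows essentially the same route as the paper: the paper's proof of Theorem \ref{rk-r-families-torsion} is precisely the argument of Theorem \ref{rk2-families-torsion} (alteration via Lemma \ref{surjectivity-after-extension}, Gabber specialization to get full generic projective monodromy, Bloch's direct-limit lemma plus the localization/self-intersection trick) with the Shimura-conditional input replaced by Proposition \ref{rk-r-one}, i.e.\ the twist to $\SL(r)$ followed by Proposition \ref{Zuo}, exactly as in your Steps 1--3. Your use of the adjoint bundle to make the Gabber step apply to $PM_{\alg}$, and of the ``moreover'' clause of Proposition \ref{Zuo} to handle the general products $\prod_j \Delta_{i_j}$, are faithful elaborations of what the paper leaves implicit.
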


\begin{proof}
	The proof is the same as that of Theorem \ref{rk2-families-torsion} except that instead of Proposition \ref{rk2-conj-torsion} one needs to use Proposition \ref{rk-r-one}.
\end{proof}

\medskip

\section{Higgs version of Bloch's conjecture}

Let us recall that Simpson  in \cite{Si0} proved that on a smooth complex projective variety there exists a  one-to-one correspondence  between irreducible flat vector bundles and stable Higgs bundles with Chern classes vanishing in $H^{2m} (X, \QQ )$ for $m\ge 1$. This suggests that one can consider an analogue of Bloch's conjecture in the world of semistable Higgs bundles with vanishing rational Chern classes. This analogue has an advantage that the conjecture becomes a purely algebraic statement that can be formulated over an arbitrary field. Note that in principle this analogue is not equivalent to the usual Bloch's conjecture as Simpson's correspondence is not algebraic. However, we show equivalence of weak forms of these conjectures (see Proposition \ref{equivalence}). We can also consider the following conjecture that is motivated by Conjecture \ref{too-optimistic-MIC} and Lemma \ref{Lie-irred-lemma}.

\begin{conjecture}\label{too-optimistic-Higgs}
	Let $X$ be a smooth projective variety of dimension $d$ defined over an algebraically closed field $k$ and let 
	$H$ be an ample divisor on $X$. Let	$(E, \theta)$ be a slope $H$-stable Higgs vector bundle with $\int _X\Delta (E)\cdot H^{d-2}=0$.	Assume that for every smooth projective variety $Y/k$ and  a generically finite map $f: Y\to X$  
	the sheaf $f^*(E, \theta)$ is slope stable with respect to some ample divisor. Then $r^i c_i(E) = \binom{r}{i} c_1(E)^i$ in $\CH^i (X)_{\QQ}$ for every $i\ge 1$. 
\end{conjecture}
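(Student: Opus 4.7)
The first step is to note that the claimed equality $r^{i}c_{i}(E)=\binom{r}{i}c_{1}(E)^{i}$ is trivial for $i=1$ and, for $i\ge 2$, is equivalent to the vanishing of all higher discriminants $\Delta_{i}(E)\in \CH^{i}(X)_{\QQ}$, since both sides agree on any $L^{\oplus r}$ and $\Delta_{i}$ is invariant under tensoring with a line bundle when $i\ge 2$. Using the twisting trick from the proof of Proposition \ref{rk-r-one}, I would first pass to a finite surjective cover $f:\tilde X\to X$ on which $f^{*}(\det E)$ admits an $r$-th root $L$, and replace $(E,\theta)$ by $f^{*}(E,\theta)\otimes (L^{-1},0)$. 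This reduces the problem to the case $\det E\simeq \cO_{X}$, where the desired conclusion simplifies to $c_{i}(E)=0$ in $\CH^{i}(X)_{\QQ}$ for $i\ge 2$; pullback preserves stability under all further alterations, and the discriminant-vanishing hypothesis is unaffected by the twist.

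In characteristic zero, the natural attack proceeds through Simpson's non-abelian Hodge correspondence: the stable Higgs bundle $(E,\theta)$ with $\int_{X}\Delta(E)\cdot H^{d-2}=0$ and trivial determinant corresponds to an irreducible flat $\SL(r,\CC)$-bundle $(V,\nabla)$. The hypothesis that stability persists under all alterations translates, via compatibility of the correspondence with finite \'etale pullback and Lemma \ref{Lie-irred-lemma}, into Lie irreducibility of the associated monodromy representation. Granting Conjecture \ref{too-optimistic-MIC} for $(V,\nabla)$ would then give $c_{i}(V)=0$ in $\CH^{i}(X)_{\QQ}$ for $i\ge 2$. The central obstacle, as stressed in the paper, is that Simpson's correspondence is not algebraic: the underlying algebraic bundles of $E$ and $V$ generally differ, and the correspondence respects Chern classes only in rational Deligne (or Betti) cohomology. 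Without additional input linking the two algebraic structures at the level of Chow groups, one cannot transport the conclusion for $V$ back to $E$.

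The more robust strategy, which I would pursue in parallel, is reduction to positive characteristic. Spread $(X,H,E,\theta)$ out to a smooth projective family over a finitely generated $\ZZ$-subalgebra $R\subset \CC$, and for each closed point $s\in \Spec R$ consider the reduction $(X_{s},H_{s},E_{s},\theta_{s})$ of rank $r\le \chr k(s)$. A Higgs analogue of Theorem \ref{main3}, whose proof should parallel the MIC case, yields the desired equalities in $B^{i}_{\QQ}$ on these reductions, and in $\CH^{i}_{\QQ}$ when the residue field is an algebraic closure of a finite field. The hard step --- and the main obstacle throughout --- is then to lift such a compatible system of rational equivalences from the positive characteristic reductions back to a rational equivalence in $\CH^{i}(X)_{\QQ}$ over $\CC$; this is precisely the subtle question on reductions of Chow rings raised in Subsection \ref{reductions?}. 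Absent a positive answer there, one can combine Proposition \ref{equivalence} with a Higgs analogue of Theorem \ref{main3} only to obtain the weaker statement in $B^{i}(X)_{\QQ}$.
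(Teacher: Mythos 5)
The statement you are trying to prove is labelled as a conjecture in the paper, and the paper offers no general proof of it; your proposal, which candidly ends with two acknowledged obstructions rather than a completed argument, is therefore not a proof either, and it is worth being explicit about that. Your two strategies (non-abelian Hodge transport, reduction mod $p$) are exactly the ones the paper's surrounding discussion suggests, and you correctly identify why each one stalls: Simpson's correspondence is not algebraic, so it cannot move information about $\CH^i(X)_{\QQ}$ between the Higgs and de Rham sides, and the lifting of torsion relations from the special fibres back to characteristic zero is precisely the open question of Subsection \ref{reductions?}. So as a map of the difficulty your proposal is accurate, but no step in it actually establishes the claimed equalities over a general algebraically closed field.

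What you miss is the one case in which the paper does prove the conjecture unconditionally, namely $k=\overline{\FF}_p$. There the hypothesis that $f^*(E,\theta)$ stays stable for every generically finite $f:Y\to X$ can be applied with $f=F_X^n$ the Frobenius iterates; since the Higgs field of a Frobenius pullback vanishes, this forces the underlying bundle $E$ to be strongly slope stable. Proposition \ref{Bloch-finite-field} then concludes by a purely finiteness argument: the twisted Frobenius pullbacks $G_m=(F_X^m)^*E\otimes\det E^{-s_m}$ form a bounded family defined over a fixed finite field, hence a finite set, so $G_m\simeq G_{m'}$ for some $m>m'$, which gives $(p^{2m}-p^{2m'})\Delta(E)=0$ in $\CH^2(X)$ and similarly for the higher discriminants $\Delta_i(E)$. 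This Frobenius-amplification argument is qualitatively different from both of your strategies (it needs no Hodge theory and no lifting problem), and it is the only mechanism in the paper that produces relations in the Chow group itself rather than in $B^*(X)_{\QQ}$ or in Deligne cohomology. Your reduction to trivial determinant via the Bloch--Gieseker covering and the line-bundle invariance of $\Delta_i$ for $i\ge 2$ is correct and matches the paper's reductions in Propositions \ref{rk-r-one} and \ref{equivalence}.
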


The formulation of the above conjecture takes into account the fact that the condition  $\int _X\Delta (E)\cdot H^{d-2}=0$ implies equalities  $r^i c_i(E) = \binom{r}{i} c_1(E)^i$ in the de Rham cohomology (see \cite[Theorem 2.16]{La4}). Another fact that is hidden in the formulation is that (in the characteristic zero case) a Higgs bundle with  $\int _X\Delta (E)\cdot H^{d-2}=0$ that is slope stable with respect to some ample divisor is ample with respect to all ample divisors.

Analogously to weak Bloch's conjecture one can also formulate the following Higgs version of this conjecture:

\begin{conjecture}\label{weak-Bloch-Higgs}
	Let $X$ be a smooth projective variety of dimension $d$ defined over an algebraically closed field $k$ of characteristic zero and let 
	$H$ be an ample divisor on $X$. Let	$(E, \theta)$ be a slope $H$-semistable Higgs vector bundle with $\int _X\Delta (E)\cdot H^{d-2}=0$. Then $r^i c_i(E) = \binom{r}{i} c_1(E)^i$ in $B^i (X) _{\QQ}$ for every $i\ge 1$. 
\end{conjecture}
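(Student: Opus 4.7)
The plan is to reduce the statement to positive characteristic by spreading out and apply Theorem \ref{main3}, which gives precisely the desired form of identity in $B^i$ under a bound on the rank. By Proposition \ref{equivalence} I may instead prove the analogous statement for flat vector bundles: if $(V, \nabla)$ is slope $H$-semistable with $\int_X \Delta(V) \cdot H^{d-2} = 0$, then $r^i c_i(V) = \binom{r}{i} c_1(V)^i$ in $B^i(X)_{\QQ}$.

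Next, I would choose a finitely generated $\ZZ$-subalgebra $R \subset k$ and spread out the data to a smooth projective $\cX \to \Spec R$, a relatively ample $\cH$, and a relative flat bundle $(\cV, \tilde\nabla)$. For a dense set of closed points $\mathfrak{p} \in \Spec R$ with residue characteristic $p > r$, the fiber $(\cX_\mathfrak{p}, \cH_\mathfrak{p}, \cV_\mathfrak{p}, \tilde\nabla_\mathfrak{p})$ satisfies all the hypotheses of Theorem \ref{main3}: slope semistability is an open condition and so persists, $\int \Delta(\cV_\mathfrak{p}) \cdot \cH_\mathfrak{p}^{d-2} = 0$ by compatibility of Chern classes with specialization, the rank $r \le p$, and $\cX_\mathfrak{p}$ lifts to $W_2(k(\mathfrak{p}))$ automatically from the $R$-model for $\mathfrak{p}$ chosen appropriately over $(p) \subset \ZZ$. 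Theorem \ref{main3} then yields the identities $r^i c_i(\cV_\mathfrak{p}) = \binom{r}{i} c_1(\cV_\mathfrak{p})^i$ in $B^i(\cX_\mathfrak{p})_{\QQ}$ for every $i \ge 1$.

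The main obstacle, which is where I expect the argument to break down unconditionally, is the transfer of these identities in $B^*$ from the special fibers back to the generic fiber. The specialization map on Chow groups is compatible with algebraic equivalence, inducing $B^i(\cX_\eta)_{\QQ} \to B^i(\cX_\mathfrak{p})_{\QQ}$, but it is generally far from injective: a cycle can become algebraically trivial on a special fiber because the smooth projective curve witnessing the equivalence is defined only in characteristic $p$ and does not deform to characteristic zero. One could try to lift the witnessing parameter curves from $\cX_\mathfrak{p}$ to a neighborhood of $\mathfrak{p}$ in $\Spec R$, or to combine information from infinitely many primes $\mathfrak{p}$ to constrain the Griffiths group of $X$, but I do not see how to execute either approach cleanly. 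Without a new input resolving this lifting issue, the characteristic $p$ reduction yields only the conjecture modulo a prime, not over the original field of characteristic zero, which is consistent with this statement being posed as a conjecture rather than a theorem.
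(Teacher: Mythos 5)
The statement you are addressing is Conjecture \ref{weak-Bloch-Higgs}: the paper does not prove it, and explicitly records (for the flat version, Conjecture \ref{weak-Bloch}, which is equivalent to it by Proposition \ref{equivalence}) that even this weak form of Bloch's conjecture is unknown. So there is no proof in the paper to compare against, and your text is --- as you yourself say in the last paragraph --- not a proof. Your strategy is, however, exactly the one the paper lays out in Subsection \ref{reductions?}: pass to the flat side via Proposition \ref{equivalence} (note this proposition is an equivalence of statements quantified over \emph{all} smooth projective varieties over $k$, so the passage should be phrased at that level, not for a fixed $X$), spread out over a finitely generated $\ZZ$-algebra $R$, check that for closed points $s$ in a dense open $U\subset \Spec R$ the fiber satisfies the hypotheses of Theorem \ref{main3} (openness of semistability, $W_2$-liftability, $r\le p$), and conclude the identities $r^i c_i(\cV_s)=\binom{r}{i}c_1(\cV_s)^i$ on the closed fibers --- in fact in $\CH^i(\cX_s)_{\QQ}$, not just in $B^i$, since the residue fields are finite.

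The obstruction you identify is genuine and is precisely Question (2) of Subsection \ref{reductions?}: whether a class in $\CH^i(\cX)$ whose restriction to every closed fiber over $U$ is torsion must have torsion image in $B^i(X)$. The specialization map $B^i(\cX_{\eta})_{\QQ}\to B^i(\cX_s)_{\QQ}$ is not known to be injective, because the families of cycles witnessing algebraic equivalence in characteristic $p$ need not lift to characteristic zero, and no argument combining infinitely many primes is currently known to control the relevant Griffiths-type groups; the paper only notes positive answers for $i=1$ and, for the $B^i$ question, for $i=\dim X$. So your write-up is a correct reconstruction of the paper's own heuristic for why the conjecture is plausible, together with an accurate diagnosis of the missing ingredient, but it does not upgrade Conjecture \ref{weak-Bloch-Higgs} to a theorem, and with the tools available in the paper it cannot.
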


\subsection{Comparison of weak Bloch's conjecture with its Higgs version} \label{Bloch}

In this subsection we show that for smooth complex projective varieties weak Bloch's conjecture is 
equivalent to the corresponding version for Higgs bundles.

Let $X$ be a smooth $n$-dimensional variety defined over some algebraically closed field $k$ 
of characteristic zero. The following proposition translates the weak form of Bloch's conjecture \ref{weak-Bloch} 
 into a statement about stable Higgs bundles (with trivial determinant):

\begin{proposition} \label{equivalence}
	The following conditions are equivalent: For every smooth projective variety $X$ defined over $k$ and
	\begin{enumerate}
		\item every flat vector bundle $(V, \nabla)$ on $X$ the  Chern classes  $c_m(V)$ are torsion in $B^m(X)$ for all $m\ge 1$. 
		\item  every irreducible flat vector bundle $(V, \nabla)$ with $\det V\simeq \cO_X$  the  Chern classes  $c_m(V)$ are torsion in $B^m(X)$ for all $m\ge 2$.
		\item  for some ample divisor $H$ on $X$ 
		for any slope $H$-stable Higgs bundle $(E, \theta )$ on $X$ with $\det E\simeq \cO_X$ and $\int _X c_2 (E)\cdot H^{n-2}=0$ the  Chern classes  $c_m(E)$ are torsion in $B^m(X)$ for all $m\ge 2$.
		\item for every ample divisor $H$ on $X$ 
		for any slope $H$-semistable reflexive Higgs sheaf $(E, \theta)$ on $X$ with $\int _X \Delta (E)\cdot H^{n-2}=0$ we have for all $m\ge 1$ equality 
		$$r^m c_m(E)={\binom{r}{m}} c_1(E)^m$$  in  $B^m(X)_{\QQ}$.  
		\item  for some ample divisor $H$ on $X$ 
		for any slope $H$-stable system of Hodge bundles $(E, \theta )$ on $X$ with $\det E\simeq \cO_X$ and $\int _X c_2 (E)\cdot H^{n-2}=0$ the  Chern classes  $c_m(E)$ are torsion in $B^m(X)$ for all $m\ge 2$.
		\item for every ample divisor $H$ on $X$ 
		for any slope $H$-semistable reflexive system of Hodge sheaves $(E, \theta)$ on $X$ with $\int _X \Delta (E)\cdot H^{n-2}=0$ we have for all $m\ge 1$ equality 
		$$r^m c_m(E)={\binom{r}{m}} c_1(E)^m$$  in  $B^m(X)_{\QQ}$.  
	\end{enumerate}
\end{proposition}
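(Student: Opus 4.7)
\medskip

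\textbf{Proof proposal.} The plan is to establish the six-fold equivalence through a cycle of implications, using three main tools: (i) Jordan--H\"older filtrations, twists by a line bundle and passage to finite \'etale coverings, for moving between arbitrary, polystable and stable cases and between general and trivial determinant; (ii) Simpson's nonabelian Hodge correspondence $(V,\nabla) \leftrightarrow (E,\theta)$ between irreducible flat bundles and stable Higgs bundles with the vanishing of rational Chern classes required on both sides; and (iii) the $\Gm$-actions on Simpson's Hodge moduli $M_{\mathrm{Hod}}(X)$ of $\lambda$-connections and on $M_{\mathrm{Higgs}}(X)$, whose orbit closures yield flat families of sheaves on $X \times \AA^1$. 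The essential consequence of (iii) is that a flat family of coherent sheaves on $X \times \AA^1$ has fiber-wise Chern classes that are pairwise algebraically equivalent, hence define the same class in $B^*(X)$.

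For $(1) \Leftrightarrow (2)$ the nontrivial direction $(2) \Rightarrow (1)$ filters any flat bundle by flat subbundles with irreducible quotients, uses multiplicativity of Chern classes, and passes to a finite \'etale cover on which one can extract an $r$-th root of the determinant of each irreducible factor, reducing to the trivial-determinant case of (2); the case $m = 1$ is automatic since $c_1$ of a flat line bundle is torsion in $B^1(X) = \NS(X)$. The equivalences $(3) \Leftrightarrow (4)$ and $(5) \Leftrightarrow (6)$ use analogous reductions in the Higgs / Hodge setting: for a slope-semistable reflexive Higgs (or Hodge) sheaf with $\int \Delta \cdot H^{n-2} = 0$, every Jordan--H\"older factor satisfies the same vanishing by the Bogomolov--Gieseker-type inequality, and the class $r^m c_m(E) - \binom{r}{m} c_1(E)^m$, which up to a nonzero rational scalar equals the higher discriminant $\Delta_m(E)$ of Subsection~\ref{higher-rank-case}, is invariant under line-bundle twists and under passage to associated gradeds, so the statements descend cleanly to $B^*(X)_{\QQ}$.

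The heart of the argument is to link the flat and Higgs/Hodge sides, i.e.\ $(2) \Leftrightarrow (5)$ and $(5) \Leftrightarrow (3)$. Given an irreducible flat bundle $(V, \nabla)$ with $\det V \simeq \cO_X$, the scaling $\lambda \mapsto (V, \lambda \nabla)$ defines a $\Gm$-orbit in $M_{\mathrm{Hod}}(X)$ whose closure is a copy of $\AA^1$; after an alteration of the base where a universal family exists, this produces a flat family of sheaves on $X \times \AA^1$ whose generic fiber is $V$ and whose special fiber at $\lambda = 0$ is the underlying bundle of a limiting Higgs sheaf $(E, \theta)$, $S$-equivalent to the Simpson-corresponding stable Higgs bundle. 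Hence $c_m(V) = c_m(E)$ in $B^m(X)$ for all $m$. Applying the $\Gm$-action on $M_{\mathrm{Higgs}}(X)$ to $(E, \theta)$ yields a further flat family interpolating between $(E, \theta)$ and a polystable system of Hodge bundles $(E_0, \theta_0)$ with the same Chern classes in $B^*(X)$. An appropriate line-bundle twist on each stable summand of $(E_0, \theta_0)$ reduces to the hypotheses of (5) or (6), and torsion-ness of the relevant classes is then transported backward along both families to $(V, \nabla)$, giving $(6) \Rightarrow (2)$ and $(5) \Rightarrow (3)$; the opposite directions follow symmetrically from the same two families.

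The hard part will be the precise control of these $\Gm$-orbit closures: one needs the existence of a universal family after an alteration, the identification of the limit at $\lambda = 0$ (respectively $t = 0$) with the expected associated graded up to $S$-equivalence, and a careful treatment of polystable limits. The symmetric formulation $r^m c_m(E) = \binom{r}{m} c_1(E)^m$ in (4) and (6) is essential here, because it is $S$-equivalence invariant and compatible with direct-sum decomposition of polystable sheaves, whereas individual torsion-ness of Chern classes need not survive these operations; this is the ultimate reason for stating the Higgs-side conjectures in this symmetric form.
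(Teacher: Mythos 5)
Your proposal is correct in substance and follows the same overall architecture as the paper: Jordan--H\"older filtrations plus determinant twists (after a covering) to pass between the semistable/general-determinant and stable/trivial-determinant statements on each side, and a degeneration of flat bundles to systems of Hodge bundles to bridge the two sides. The main difference is in how the bridge is built. Where you construct flat families over $\AA^1$ by taking $\Gm$-orbit closures in the moduli of $\lambda$-connections and in the Higgs moduli space (and therefore have to worry about universal families after alterations and about $S$-equivalence of the limit), the paper simply invokes two results of \cite{La4}: Theorem 1.3, giving a filtration of any semistable Higgs bundle whose associated graded is a semistable system of Hodge sheaves --- an operation that preserves Chern classes exactly in $\CH^*(X)$, not merely in $B^*(X)$, so no family or $S$-equivalence bookkeeping is needed for $(6)\Rightarrow(3)$ --- and Theorem 1.4 for the deformation of a flat bundle to a $\CC$-VHS used in $(5)\Rightarrow(2)$. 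For $(2)\Rightarrow(5)$ the paper is also more economical: a stable system of Hodge bundles as in $(5)$ directly underlies a polarizable $\CC$-VHS, hence a flat bundle with the same Chern classes (the Higgs bundle being the associated graded of the Hodge filtration), so no interpolating family is required in that direction. Your route is viable but essentially amounts to reproving the cited theorems; its one genuinely useful extra observation is the last paragraph, which correctly isolates why the symmetric form $r^m c_m(E)=\binom{r}{m}c_1(E)^m$ is the right invariant (it survives $S$-equivalence and polystable decomposition). One imprecision to fix: the limit at $\lambda=0$ of the orbit of $(V,\lambda\nabla)$ is the class of a polystable system of Hodge bundles obtained from a destabilizing filtration of $(V,\nabla)$; it is in general \emph{not} $S$-equivalent to the Higgs bundle attached to $(V,\nabla)$ by the (non-algebraic) Simpson correspondence, and your argument should not rely on that identification --- only on constancy of Chern classes in $B^*(X)$ along the flat family. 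You should also note, as the paper does, that $\int_X\Delta(E^i)\cdot H^{n-2}=0$ for one ample $H$ implies it for all, which is needed to reconcile the ``some $H$'' and ``every $H$'' quantifiers in the statements.
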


\begin{proof}
Clearly, $(4)$ implies $(3)$. Let us check that $(3)$ implies $(4)$.
First let us show that we can assume that $\det E\simeq \cO_X$.
By \cite[Theorem 3.2]{Si0} (or \cite[Theorem 2.13]{La4}) $E$ is locally free.
By the Bloch--Gieseker theorem (see, e.g., \cite[Theorem 4.1.10]{Laz}) we can find a smooth projective variety $Y$ and a finite morphism $f: Y\to X$ such that $f^*(\det E)\simeq L^{\otimes r}$ for some line bundle $L$. Then $\tilde E= (f^*E)\otimes L^{-1}$ with an induced Higgs field $\tilde \theta: \tilde E\to \tilde E\otimes f^*\Omega_X\to \tilde E\otimes \Omega_Y$	is a slope $f^*H$-semistable Higgs bundle with $\det \tilde E \simeq \cO_Y$.
Note that $f^*: B^m (X)_{\QQ}\to B^m (Y)_{\QQ} $ is injective. So by \cite[Lemma 1.11]{La4}
it is sufficient to show that $\Delta _m (f^*E)=0 $ for $m\ge 2$. Applying our assumption to $(\tilde E, \tilde \theta)$ this follows from $\Delta _m (f^*E) = \Delta _m (\tilde E)=0$ for $m\ge 2$. 
Now we need to show that we can reduce to the case when $(E, \theta)$ is slope $H$-stable.

So from now on we can assume that $\det E\simeq \cO_X$.
Let $E_0=0\subset (E_1, \theta_1)\subset ...\subset (E_m, \theta_m)=(E, \theta)$ be 
a Jordan--H\"older filtration  of $(E, \theta)$ and let us set $(E^i,\theta^i)=(E_{i},\theta_i)/(E_{i-1},\theta_{i-1})$.
Then in $\CH^2 (X) _{\QQ}$ we have
	$$\frac{\Delta (E)}{r}=\sum _i\frac{\Delta (E^i)}{r_i}- \frac{1}{r}\sum _{i<j} r_ir_j \left( \frac{c_1(E^i)}{r_i}- \frac{c_1(E^j)}{r_j}\right) ^2,$$
	where $r$ is the rank of $E$ and $r_i$ denotes the rank of $E^i$. 
	By the definition of a Jordan--H\"older filtration we have 
	$$\int_X\left( \frac{c_1(E^i)}{r_i}- \frac{c_1(E^j)}{r_j}\right)\cdot H^{n-1}=0.$$
	Then the Hodge index theorem implies that 
	$$\int _X\left( \frac{c_1(E^i)}{r_i}- \frac{c_1(E^j)}{r_j}\right) ^2\cdot H^{n-2}\le 0.$$
	Since by the assumption we have  $\int _X \Delta (E)\cdot H^{n-2}=0$ and  Bogomolov's inequality for slope semistable Higgs sheaves gives $\int _X \Delta (E^i)\cdot H^{n-2}\ge 0$, the above formula implies that 	$$\int _X\left( \frac{c_1(E^i)}{r_i}- \frac{c_1(E^j)}{r_j}\right) ^2\cdot H^{n-2}= 0$$
	and $\int _X \Delta (E^i)\cdot H^{n-2}= 0$ for all $i$ and $j$.
	
	By \cite[Theorem 9.6.3]{Kl} if $D$ is a Cartier divisor on $X$ then $mD$ is algebraically equivalent to zero for some positive integer $m$ if and  only if $\int _X D\cdot H^{n-1}=\int _X D^2\cdot H^{n-2}=0$. It follows that 
	some positive multiple of $\left( \frac{c_1(E^i)}{r_i}- \frac{c_1(E^j)}{r_j}\right) $ is algebraically equivalent to zero
	and hence $\frac{\Delta (E)}{r}=\sum _i\frac{\Delta  (E^i)}{r_i}$ in $B^2 (X)_{\QQ }$. Note also that since  $\det E\simeq \cO_X$, we have $c_1(E^i)=0$ in $B^1 (X)_{\QQ}$ for all $i$.
	Note that  $\int _X \Delta (E^i)\cdot H^{n-2}= 0$ for some ample $H$ implies the same equality for all ample $H$ (this follows, e.g., from Simpson's correspondence although it can also be proven directly).
	So our assumption and reduction to the trivial determinant case show that $c_m (E^i)=0$ for all $i$. This implies that $c_m ^B (E)=0$ as required.

	The same arguments give equivalences $(1)\iff (2)$ and $(5)\iff (6)$. Clearly, $(3)$ implies $(5)$. 
	To prove $(6)\Rightarrow (3)$ note that by \cite[Theorem 1.3]{La4} there exists a decreasing filtration
	$E=N^0\supset N^1\supset ... \supset N^l=0$ such that $\theta (N^i)\subset N^{i-1}\otimes \Omega_X$ and the associated graded is a slope $H$-semistable system of Hodge sheaves. Applying $(6)$ to this system, we get $c_m (E)=0$ in $B^m (X)_{\QQ}$. 
		
	The fact that $(2)$ implies $(5)$ follows from the fact that every slope $H$-stable system of Hodge bundles as in $(5)$ underlies a complex polarizable variation of Hodge structure, so also a flat vector bundle. 
	Finally $(5)$ implies $(2)$ because  by \cite[Theorem 1.4]{La4} every flat vector bundle can be deformed to a complex polarizable variation of Hodge structure  which underlies a system of Hodge bundles.
\end{proof}

Similar arguments as that in the proof of $(3)\Rightarrow (4)$ and Reznikov's variant of Bloch's conjecture (see \cite[Theorem 1.1]{Re}) give the following result.

\begin{theorem}
	Let $X$ be a  smooth complex projective variety and let $H$ be an ample divisor on $X$.  
	For any slope $H$-semistable  Higgs bundle $(E, \theta)$ on $X$ with $\int _X \Delta (E)\cdot H^{\dim X-2}=0$ we have for all $m\ge 1$ equality 
	$$r^m c_m^D(E)\otimes  \QQ={\binom{r}{m}} c_1^D(E)^m\otimes  \QQ$$  in  Deligne's cohomology $H^{2m}_{\cD} (X, \ZZ (m))\otimes \QQ$.  
\end{theorem}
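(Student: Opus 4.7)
The plan is to mirror the proof of the implication $(3)\Rightarrow(4)$ in Proposition~\ref{equivalence}, substituting Reznikov's vanishing theorem in Deligne--Beilinson cohomology for flat bundles in place of the hypothetical weak Bloch assumption and working in $\bigoplus_m H^{2m}_{\cD}(X,\QQ(m))$ rather than in $B^*(X)_{\QQ}$. The collective system of equalities $r^m c_m=\binom{r}{m}c_1^m$ (for every $m\geq 1$) is invariant under tensoring $E$ by a line bundle, so a Bloch--Gieseker cover $f:Y\to X$ and a line bundle $L$ on $Y$ with $f^*(\det E)\simeq L^{\otimes r}$ reduce the problem to proving $c_m^D(\tilde E)\otimes\QQ=0$ for every $m\geq 1$, where $\tilde E:=f^*E\otimes L^{-1}$ (with induced Higgs field) is slope $f^*H$-semistable with $\det\tilde E\simeq\cO_Y$ and $\int_Y\Delta(\tilde E)\cdot(f^*H)^{n-2}=0$; here one uses that $f^*$ is injective on every $H^{2m}_{\cD}(\cdot,\QQ(m))$ since $f_*f^*$ is multiplication by $\deg f$.

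A Jordan--H\"older filtration of $(\tilde E,\tilde\theta)$ with stable quotients $(E^i,\theta^i)$ inherits, via the Hodge index theorem and Bogomolov's inequality exactly as in Proposition~\ref{equivalence}, the vanishing $\int_Y\Delta(E^i)\cdot(f^*H)^{n-2}=0$ and the numerical triviality of every difference $c_1(E^i)/r_i-c_1(E^j)/r_j$. In particular each $\det E^i$ lies in $\Pic^{\tau}(Y)$ and so carries a flat connection. Further iterations of Bloch--Gieseker produce covers on which each $E^i$ is twisted to a stable Higgs bundle $\cF_i$ with trivial determinant and vanishing discriminant intersection. The theorem is thus reduced to the \emph{Higgs analogue of Reznikov's theorem}: for every stable Higgs bundle $(\cF,\psi)$ on a smooth complex projective variety $Z$ with $\det\cF\simeq\cO_Z$ and $\int_Z\Delta(\cF)\cdot H_Z^{n-2}=0$, one has $c_m^D(\cF)\otimes\QQ=0$ for $m\geq 2$. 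Granting this, twist-invariance and pullback injectivity give $c^D(E^i)=(1+c_1^D(E^i)/r_i)^{r_i}$ in rational Deligne cohomology on $Y$, and the multiplicativity $c^D(\tilde E)=\prod c^D(E^i)$ combined with the key fact -- itself an immediate consequence of Reznikov applied to direct sums of the flat line bundles $\det E^i$ -- that any degree $\geq 2$ product of rational Deligne first Chern classes of flat line bundles on $Y$ vanishes, collapses the product to $1+c_1^D(\tilde E)=1$ and concludes the proof.

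The main technical point is therefore the Higgs analogue of Reznikov's theorem. For a stable $(\cF,\psi)$ as above, Simpson's non-abelian Hodge correspondence supplies an irreducible flat vector bundle $(V,\nabla)$ on $Z$ sharing with $\cF$ the underlying $C^{\infty}$ bundle, and Reznikov's theorem gives $c_m^D(V)\otimes\QQ=0$ for $m\geq 2$. What remains is the equality $c_m^D(\cF)=c_m^D(V)$ in $H^{2m}_{\cD}(Z,\QQ(m))$, which is subtle because Simpson's correspondence does not preserve the algebraic structure. My plan is to use the $\CC^*$-action on the Higgs moduli space to deform $(\cF,\psi)$ to a fixed point $(\cF',\psi')$, which is a system of Hodge bundles underlying a complex polarizable variation of Hodge structure $(V',\nabla')$; since $\cF'$ is the associated graded of the Hodge filtration on $V'$ as a coherent sheaf, additivity of Chern classes on short exact sequences gives $c_m^D(\cF')=c_m^D(V')$ in Deligne cohomology, and Reznikov's theorem applied to $(V',\nabla')$ then yields $c_m^D(\cF')\otimes\QQ=0$ for $m\geq 2$. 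The hardest step is transferring this equality from the fixed point $\cF'$ back to $\cF$ along the $\CC^*$-orbit: no off-the-shelf deformation invariance of rational Deligne--Beilinson Chern classes is available for Higgs bundles, and I expect to have to argue via a direct Cheeger--Simons analysis on the underlying harmonic bundle -- paralleling Reznikov's original argument on the flat side -- by constructing explicit Chern--Weil representatives for the Cheeger--Simons lifts using the harmonic metric on the deforming family, which is the main work the proof requires.
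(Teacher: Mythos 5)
Your reduction steps (Bloch--Gieseker to trivialize the determinant, injectivity of $f^*$ on rational Deligne cohomology, the Jordan--H\"older/Hodge-index/Bogomolov analysis, and the vanishing of degree $\ge 2$ products of Deligne first Chern classes of flat line bundles) all match the intended argument, which the paper describes as running parallel to $(3)\Rightarrow(4)$ of Proposition \ref{equivalence}. The genuine gap is exactly where you flag it: your ``Higgs analogue of Reznikov's theorem'' is left resting on a transfer of rational Deligne Chern classes along the $\CC^*$-degeneration from $(\cF,\psi)$ to its fixed-point limit, and you concede that no deformation invariance is available and that you would have to invent a Cheeger--Simons/harmonic-metric argument to supply it. That is not a detail but the entire content of the step: the $\CC^*$-limit is a priori only an S-equivalence class in a coarse moduli space, the underlying holomorphic bundle jumps at $t=0$, and Deligne classes are sensitive to the holomorphic structure, so nothing formal carries the vanishing back from the fixed point to $\cF$. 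As written, the proposal reduces the theorem to an unproved statement.

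The paper's route avoids the degeneration altogether. Instead of taking the $\CC^*$-limit in the moduli space, one invokes Simpson's filtration (\cite[Theorem 1.3]{La4}, used in the proof of $(6)\Rightarrow(3)$ of Proposition \ref{equivalence}): the semistable Higgs bundle $E$ itself carries a decreasing filtration $E=N^0\supset\cdots\supset N^l=0$ with $\theta(N^i)\subset N^{i-1}\otimes\Omega_X$ whose associated graded is a slope semistable \emph{system of Hodge sheaves}. Because this is an honest filtration of $E$ by coherent subsheaves on the same variety, the Whitney formula gives $c^D_m(E)=c^D_m(\gr_N E)$ on the nose --- no deformation invariance is needed. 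One then runs the Jordan--H\"older reduction on the resulting system of Hodge bundles; its stable factors underlie polarizable $\CC$-variations of Hodge structure, and each such factor is the associated graded of the Hodge filtration of the corresponding flat bundle, so again by additivity its Deligne Chern classes equal those of the flat bundle, to which Reznikov's theorem applies directly. In short: replace your $\CC^*$-orbit degeneration by the algebraic filtration of \cite[Theorem 1.3]{La4}, and the problematic transfer step disappears; the rest of your argument then goes through essentially as you wrote it.
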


\subsection{Higgs version of Bloch's conjecture in positive characteristic}\label{positive-char}

Note that we formulate Conjecture \ref{too-optimistic-Higgs} in an arbitrary characteristic. However, in positive characteristic the condition that  for all generically finite maps $f$ the pullback $f^*(E, \theta)$ is slope stable, implies that already the vector bundle $E$ is strongly slope stable, i.e., for all $n\ge 0$ the $n$-th Frobenius pullback $(F_X^n)^*E$ is slope stable. This follows easily by taking $f=F_X^n$ for $n\ge 1$ as then the Higgs field of   $f^*(E, \theta)$ vanishes.  Therefore if $k$ is an algebraic closure of a finite field then the conjecture follows from the following proposition:

\begin{proposition} \label{Bloch-finite-field}
	Let $X$ be a smooth projective variety of dimension $n$ defined over an algebraic closure of a finite field $k$. Let 
	$H$ be an ample divisor and let	$E$ be a locally free $\cO_X$-module of rank $r$ with $\int _X\Delta (E)\cdot H^{n-2}=0$. Assume that $E$ is strongly slope $H$-semistable, i.e., for  every $m$ the $m$-th Frobenius pullback $(F_X^m)^*E$ is slope $H$-semistable. Then  for all $i\ge 1$ we have $r^i c_i(E) = \binom{r}{i} c_1(E)^i$ in $\CH ^i (X)_{\QQ}$.
	In particular, $\Delta (E)$ is torsion in $\CH^2 (X)$. 
\end{proposition}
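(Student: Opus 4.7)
The plan is to prove the proposition in two stages: first, establish the equalities $r^i c_i(E) = \binom{r}{i} c_1(E)^i$ in the ring $B^*(X)_\QQ$ of cycles modulo algebraic equivalence; then upgrade them to $\CH^*(X)_\QQ$ using the special structure available over $\bar\FF_q$.

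For the first stage, I mimic the proof of the implication $(3)\Rightarrow(4)$ of Proposition \ref{equivalence}. Choosing a Jordan--H\"older filtration of $E$ with associated graded pieces $E^i$ of rank $r_i$, the Hodge index theorem together with Bogomolov's inequality (valid for each $E^i$ because strong semistability is inherited by JH factors) forces
\[
\int_X\Bigl(\frac{c_1(E^i)}{r_i}-\frac{c_1(E^j)}{r_j}\Bigr)^2\cdot H^{n-2}=0
\quad\text{and}\quad
\int_X\Delta(E^i)\cdot H^{n-2}=0,
\]
so each $c_1(E^i)/r_i$ agrees with $c_1(E)/r$ in $B^1(X)_\QQ$ and each $E^i$ is strongly \emph{stable} with numerically trivial discriminant. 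For a strongly stable $F$ on $X$ in positive characteristic with $\int_X\Delta(F)\cdot H^{n-2}=0$, Langer's Frobenius-pullback machinery (the successive Bogomolov inequalities for $(F_X^m)^{*}F$, which remain semistable by strong semistability) refines numerical vanishing to $\Delta(F)=0$ in $B^2(X)_\QQ$; the $\log\ch$ computation used in Subsection \ref{higher-rank-case} then propagates this to $r_i^{j}c_j(E^i)=\binom{r_i}{j}c_1(E^i)^{j}$ in $B^j(X)_\QQ$ for every $j\ge 1$. Adding the contributions of the $E^i$ yields the required identities for $E$ in $B^i(X)_\QQ$.

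For the second stage, set $Z_i:=r^i c_i(E)-\binom{r}{i}c_1(E)^{i}\in\CH^i(X)$. By the first stage each $Z_i$ is algebraically equivalent to zero, so by definition of algebraic equivalence one can write
\[
Z_i=(\pi_X)_{*}\bigl(\mathcal{Z}_i\cdot\pi_T^{*}([t_0]-[t_1])\bigr)
\]
for some smooth projective connected curve $T$ over $\bar\FF_q$, a cycle $\mathcal{Z}_i$ on $X\times T$, and points $t_0,t_1\in T(\bar\FF_q)$. Since $X$ and $E$, and hence (after enlarging the field) $T$, $\mathcal{Z}_i$, $t_0$, $t_1$, are all defined over some finite subfield $\FF_{q^m}\subset\bar\FF_q$, the degree-zero class $[t_0]-[t_1]$ represents an element of the finite group $\Pic^0(T)(\FF_{q^m})$ (an abelian variety over a finite field has only finitely many rational points). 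Hence $N([t_0]-[t_1])=0$ in $\CH_0(T)$ for some positive integer $N$, and therefore $NZ_i=0$ in $\CH^i(X)$, so $Z_i=0$ in $\CH^i(X)_\QQ$. Specialising to $i=2$ gives $\Delta(E)=2rc_2(E)-(r-1)c_1(E)^{2}=0$ in $\CH^2(X)_\QQ$, which is the last assertion of the proposition.

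The main obstacle is the first stage, and within it the step promoting the numerical vanishing $\int_X\Delta(E)\cdot H^{n-2}=0$ to the $B^2(X)_\QQ$-vanishing $\Delta(E)=0$ for strongly semistable $E$: this is the genuinely positive-characteristic input, and requires the Frobenius-pullback form of Bogomolov's inequality together with the fact that strong semistability is preserved by all Frobenius pullbacks. By contrast, the second stage is formal once the $B^*(X)_\QQ$-identities are in hand, using only the finiteness of $\Pic^0(T)(\FF_{q^m})$ for a curve $T$ over a finite field.
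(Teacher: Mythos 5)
Your overall two-stage strategy is sound in outline, and stage two is genuinely different from the paper's argument: the paper never passes through $B^*(X)$ at all. It normalizes the Frobenius pullbacks, setting $G_m=(F_X^m)^*E\otimes\det E^{-s_m}$ with $p^m=rs_m+q_m$, notes that $c_1(G_m)=q_mc_1(E)$ takes only finitely many values and $\int_X\Delta(G_m)\cdot H^{n-2}=0$, and then uses boundedness (\cite[Theorem 4.4]{La1}) together with the fact that all the $G_m$ are defined over one fixed finite field to conclude that the family $\{G_m\}$ is literally a finite set; an isomorphism $G_m\simeq G_{m'}$ for some $m>m'$ gives $(p^{im}-p^{im'})\Delta_i(E)=0$ directly in $\CH^i(X)$ for all $i\ge 2$, and \cite[Lemma 1.7]{La4} converts this into the stated identities. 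Your stage two (a cycle algebraically equivalent to zero over $\overline{\FF}_q$ is torsion in the Chow group, via finiteness of $\Pic ^0(T)(\FF_{q^m})$) is correct and is a legitimate alternative route from $B^*$ to $\CH^*$; the Jordan--H\"older reduction in stage one is, by contrast, unnecessary, since the statement you need holds for strongly semistable bundles directly.

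The genuine gap is at the heart of stage one. You assert that for a strongly (semi)stable $F$ with $\int_X\Delta(F)\cdot H^{n-2}=0$ the ``successive Bogomolov inequalities for $(F_X^m)^*F$'' refine the numerical vanishing to $\Delta(F)=0$ in $B^2(X)_{\QQ}$. They cannot: Bogomolov's inequality applied to $(F_X^m)^*F$ only yields $p^{2m}\int_X\Delta(F)\cdot H^{n-2}\ge 0$, which is no new information. The mechanism that actually works is the boundedness of the normalized family $\{G_m\}$ above, which forces the set $\{\Delta(G_m)\}=\{p^{2m}\Delta(F)\}$ to be finite in $B^2(X)$ (indeed in $\CH^2(X)$ over $\overline{\FF}_q$), whence $\Delta(F)$ is torsion; without this input your central claim is unproved. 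A second gap: even granting $\Delta(F)=0$ in $B^2(X)_{\QQ}$, the higher identities $r^ic_i(E)=\binom{r}{i}c_1(E)^i$ for $i\ge 3$ do not follow ``by the $\log\ch$ computation'' --- they are equivalent to the vanishing of the higher discriminants $\Delta_i$ in $B^i(X)_{\QQ}$, which is independent information for each $i$ and must be obtained by running the same finiteness argument on $\Delta_i(G_m)=p^{im}\Delta_i(F)$ for every $i$ separately, exactly as the paper does.
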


\begin{proof}
	There exists a finite subfield $l\subset k$ such that both $X$ and $E$ are defined over $l$.	
	Let us write $p^m=rs_m+q_m$ for some  $s_m$ and $0\le q_m<r$ and let us set $G_m:=(F_X^m)^*E\otimes \det E^{-s_m}$. 
	The sheaves $G_m$ are locally free $\cO_X$-modules defined over the same finite field $l$. Note that $c_1(G_m)= q_m c_1(E)$ and $\Delta (G_m)=p^{2m}\Delta (E)$. Hence $\{c_1(G_m) \}_{m\ge 0}$ can take only finitely many values and $\int _X\Delta (G_m)\cdot H^{n-2}=0$ for all $m\ge 0$. By \cite[Theorem 4.4]{La1} the family $\{G_m \}_{m\ge 0}$ is bounded and since all elements are defined over the same finite field, this family is actually finite. It follows that there exist some $m>m'$
	such that $G_m\simeq G_{m'}$. In particular, we have  $\Delta (G_m) =\Delta (G_{m'})$ and hence $(p^{2m}-p^{2m'})\Delta (E)=0$ in $\CH^2 (X)$. Similarly,  in the notation of \cite{La4}  the same argument shows that all  $\Delta _i (E)$ are torsion in $\CH^i (X)$ for $i\ge 2$. By  \cite[Lemma 1.7]{La4} this implies that for all $i\ge 1$ we have
	$r^i c_i(E) = \binom{r}{i} c_1(E)^i$ in $\CH ^i (X)_{\QQ}$.
\end{proof}

\begin{proposition} \label{Bloch-in-B-char-p}
	Let $X$ be a smooth projective variety of dimension $n$ defined over an algebraically closed field $k$ of positive characteristic. Let $H$ be an ample divisor and let	$E$ be a locally free $\cO_X$-module (of finite rank) with $\int _X\Delta (E)\cdot H^{n-2}=0$. Assume that $E$ is strongly slope $H$-semistable. Then for all $i\ge 1$ we have $r^i c_i(E) = \binom{r}{i} c_1(E)^i$ in $B^i(X)$.
\end{proposition}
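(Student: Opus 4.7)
The plan is to imitate the proof of Proposition \ref{Bloch-finite-field}, but replace the finiteness of sheaves over a finite field by the observation that in a bounded family of sheaves on $X$, the Chern classes take only finitely many values modulo algebraic equivalence, over any algebraically closed field.

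First, following the finite-field argument verbatim, write $p^m = r s_m + q_m$ with $0 \le q_m < r$ and set $G_m := (F_X^m)^* E \otimes (\det E)^{-s_m}$. Since $(F_X^m)^* E$ is slope $H$-semistable by strong semistability and twisting by a line bundle preserves semistability, each $G_m$ is slope $H$-semistable; since higher discriminants are invariant under line-bundle twists and each monomial in $\Delta_i$ is homogeneous of weighted degree $i$ in the Chern classes, we have $\Delta_i(G_m) = p^{mi}\Delta_i(E)$, so in particular $\int_X \Delta(G_m) \cdot H^{n-2} = 0$ and $c_1(G_m) = q_m c_1(E)$ takes at most $r$ values. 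By \cite[Theorem 4.4]{La1} the family $\{G_m\}_{m \ge 0}$ is therefore bounded.

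Second, boundedness gives a $k$-scheme $T$ of finite type and a coherent sheaf $\cF$ on $T \times X$ such that every $G_m$ is isomorphic to some fiber $\cF_{t_m}$, $t_m \in T(k)$. Since $T$ has only finitely many connected components, pigeonhole yields indices $m > m'$ with $t_m$ and $t_{m'}$ in the same connected component $T_0$ of $T$. By the very definition of algebraic equivalence (any two closed points of a connected $k$-scheme of finite type can be joined by a chain of smooth curves along which $\cF$ stays flat after a suitable flattening stratification), the Chern classes of $\cF_t$ in $B^*(X)$ are constant on $T_0$. Hence $c_i(G_m) = c_i(G_{m'})$ in $B^i(X)$ for all $i$, and in particular $\Delta_i(G_m) = \Delta_i(G_{m'})$ in $B^i(X)$ for all $i \ge 2$.

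Third, combine with the computation $\Delta_i(G_m) = p^{mi}\Delta_i(E)$ to obtain
\[
(p^{mi} - p^{m'i})\, \Delta_i(E) = 0 \quad \text{in } B^i(X)
\]
for every $i \ge 2$. Thus $\Delta_i(E)$ is torsion in $B^i(X)$, so in particular $\Delta_i(E) = 0$ in $B^i(X)_{\QQ}$ for all $i \ge 2$. Invoking \cite[Lemma 1.7]{La4} (as in the last step of the proof of Proposition \ref{Bloch-finite-field}) converts the vanishing of all higher discriminants into the desired equalities $r^i c_i(E) = \binom{r}{i} c_1(E)^i$ in $B^i(X)_{\QQ}$ for all $i \ge 1$.

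The main obstacle is the second step: turning boundedness into finiteness of the $B^i$-classes. Over a finite field, boundedness trivially yields finiteness of isomorphism classes, but over a general algebraically closed $k$ one genuinely needs the fact that Chern classes are constant on connected components of a finite-type parameter space when passed to $B^i$. Once this is in place (it is a standard consequence of flat specialization combined with the definition of algebraic equivalence), the rest of the argument is a direct adaptation of Proposition \ref{Bloch-finite-field}.
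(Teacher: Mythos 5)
Your proposal is correct and follows essentially the same route as the paper: the author likewise reuses the $G_m=(F_X^m)^*E\otimes(\det E)^{-s_m}$ construction from Proposition \ref{Bloch-finite-field}, invokes boundedness, and replaces finiteness over a finite field by the observation that two of the $G_m$ must lie in the same flat family, whence $\Delta_i(G_m)=\Delta_i(G_{m'})$ in $B^i(X)$. Your second step merely spells out the pigeonhole on connected components and the constancy of $B^*$-classes in flat families, which the paper leaves implicit.
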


\begin{proof}
	The proof is very similar to that of Proposition \ref{Bloch-finite-field}. Namely, in notaton from that proof  the family $\{G_m \}_{m\ge 0}$ is bounded. Therefore  there exist some $m>m'$ such that $G_m$ and $G_{m'}$ are elements of the same flat family of vector bundles on $X$. So  $\Delta _i(G_m) =\Delta _i (G_{m'})$ in $B^i (X)$. As before this implies equalities 	$r^i c_i(E) = \binom{r}{i} c_1(E)^i$ in $B^i(X)$.
\end{proof}

\begin{corollary}
	Let $X$ be a smooth projective variety of dimension $n$ defined over an algebraically closed field $k$ of positive characteristic. Let $H$ be an ample divisor and let	$E$ be a  strongly slope $H$-semistable rank $r$ vector  bundle.
	If   $\int _X \ch_1 (E)\cdot H^{n-1}=\int _X\ch_2 (E)\cdot H^{n-2}=0$ then $c_i (E)$ is torsion in $B^i(X)$ for all $i \ge 1$.
\end{corollary}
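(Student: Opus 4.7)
My plan is to reduce to Proposition~\ref{Bloch-in-B-char-p} by combining Bogomolov's inequality with the Hodge index theorem to extract the vanishing $\int_X \Delta(E) \cdot H^{n-2} = 0$ from the Chern character hypotheses, and then to use Kleiman's algebraic equivalence criterion to show that $c_1(E)$ itself is torsion in $B^1(X)$.

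First, I would translate the hypotheses into Chern classes. Since $\ch_1(E) = c_1(E)$ and $\ch_2(E) = \frac{1}{2}c_1(E)^2 - c_2(E)$, the assumptions read
$$\int_X c_1(E) \cdot H^{n-1} = 0 \qquad \text{and} \qquad 2\int_X c_2(E) \cdot H^{n-2} = \int_X c_1(E)^2 \cdot H^{n-2}.$$
Substituting the second equation into the definition $\Delta(E) = 2rc_2(E) - (r-1)c_1(E)^2$ produces
$$\int_X \Delta(E) \cdot H^{n-2} = \int_X c_1(E)^2 \cdot H^{n-2}.$$
The left-hand side is $\geq 0$ by Bogomolov's inequality for strongly slope semistable bundles in positive characteristic, while the right-hand side is $\leq 0$ by the Hodge index theorem applied to the class $c_1(E)$, which is numerically orthogonal to $H^{n-1}$. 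Both quantities therefore vanish.

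With $\int_X \Delta(E) \cdot H^{n-2} = 0$, Proposition~\ref{Bloch-in-B-char-p} yields $r^i c_i(E) = \binom{r}{i} c_1(E)^i$ in $B^i(X)$ for every $i \geq 1$. Meanwhile, the two vanishings $\int_X c_1(E) \cdot H^{n-1} = 0$ and $\int_X c_1(E)^2 \cdot H^{n-2} = 0$ let me invoke Kleiman's criterion \cite[Theorem 9.6.3]{Kl} (the same one used in the proof of Proposition~\ref{equivalence}) to conclude that some positive multiple of $c_1(E)$ is algebraically equivalent to zero, i.e.\ $c_1(E)$ is torsion in $B^1(X)$. Consequently each power $c_1(E)^i$ is torsion, the identity above forces $r^i c_i(E)$ to be torsion in $B^i(X)$, and multiplication by an appropriate integer annihilates $c_i(E)$ itself. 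The argument is essentially a direct combination of results already in place; no real obstacle arises beyond verifying the numerical identity that converts the $\ch_1,\ch_2$ hypotheses into the discriminant vanishing required to apply Proposition~\ref{Bloch-in-B-char-p}.
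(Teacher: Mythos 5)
Your proposal is correct and follows essentially the same route as the paper: Bogomolov's inequality for strongly semistable bundles plus the Hodge index theorem force $\int_X \Delta(E)\cdot H^{n-2}=0$ and $\int_X c_1(E)^2\cdot H^{n-2}=0$, then Kleiman's criterion gives $c_1(E)$ torsion in $B^1(X)$ and Proposition \ref{Bloch-in-B-char-p} finishes the argument. The only cosmetic difference is that the paper packages the two inequalities into a single chain starting from $0=2r\int_X \ch_2(E)\cdot H^{n-2}=\int_X\bigl(c_1(E)^2-\Delta(E)\bigr)\cdot H^{n-2}$, which is the same computation you carry out.
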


\begin{proof}
	By \cite[Theorem 3.2]{La1} we have  $\int _X\Delta (E)\cdot H^{n-2}\ge 0$. Hence by the Hodge index theorem
	$$0=2r \int _X \ch_2 (E)\cdot H^{n-2}= \int _X( c_1 (E)^2-\Delta (E) )\cdot H^{n-2}\le \int _X c_1 (E)^2\cdot H^{n-2}\le 
	\frac{(\int _X c_1 (E)\cdot H^{n-1} )^2}{H^n}=0.
	$$
	It follows that $\int _X c_1 (E)\cdot H^{n-1}=\int _Xc_2 (E)\cdot H^{n-2}=0$. So by \cite[Theorem 9.6.3]{Kl} the class of 
	$ c_1 (E)$ in $B^1 (X)$ is torsion. Now the required assertion follows directly from Proposition \ref{Bloch-in-B-char-p}.
\end{proof}

Finally, one can prove a variant of Conjecture \ref{weak-Bloch-Higgs} in positive characteristic:

\begin{theorem}\label{Bloch-in-char-p-Higgs}
	Let $X$ be a smooth projective variety of dimension $n$ defined over an algebraically closed field $k$ of positive characteristic $p$. Let $D$ be a simple normal crossing divisor on $X$ and let us assume that $(X, D)$ is liftable to $W_2(k)$. Let us also fix an ample divisor $H$ on $X$.
	Let $(E, \theta)$ be a slope $H$-semistable logarithmic Higgs bundle of rank $r\le p$ on $(X, D)$.
	If  $\int _X\Delta (E)\cdot H^{n-2}=0$ then for every $i\ge 1$
	we have $r^i c_i(E) = \binom{r}{i} c_1(E)^i$
	in $B^i (X)_{\QQ}$. Moreover, if $k$ is an algebraic closure of a finite field then equalities $r^i c_i(E) = \binom{r}{i} c_1(E)^i$ hold in $\CH^i (X)_{\QQ}$.
\end{theorem}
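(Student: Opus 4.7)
The plan is to mimic the Frobenius-pullback boundedness argument used in Propositions \ref{Bloch-finite-field} and \ref{Bloch-in-B-char-p}, simply replacing ``strongly slope $H$-semistable vector bundle'' in those proofs by ``strongly slope $H$-semistable logarithmic Higgs bundle''. Theorem \ref{main3} (the MIC version of the same statement) should admit a completely parallel proof, so the key new ingredient here is the Higgs-bundle analogue of strong slope semistability.

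Granted strong slope $H$-semistability of $(E,\theta)$, i.e.\ that $((F_X^m)^*E,(F_X^m)^*\theta)$ is slope $H$-semistable as a logarithmic Higgs bundle for every $m \ge 0$, the argument runs as follows. Write $p^m = r s_m + q_m$ with $0 \le q_m < r$ and set
\[
G_m := (F_X^m)^*E \otimes (\det E)^{-s_m}.
\]
Then $c_1(G_m) = q_m c_1(E)$ takes only finitely many values and $\int_X \Delta(G_m)\cdot H^{n-2} = p^{2m}\int_X \Delta(E)\cdot H^{n-2} = 0$. Strong Higgs-semistability ensures that each $G_m$, equipped with its induced Higgs field, is slope $H$-semistable, hence the family $\{G_m\}_{m \ge 0}$ has controlled Harder--Narasimhan polygon and is bounded by \cite[Theorem 4.4]{La1}. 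When $k$ is an algebraic closure of a finite field, all the $G_m$ are defined over a common finite subfield of $k$, so boundedness forces $G_m \simeq G_{m'}$ for some $m > m'$; otherwise, at least two of them lie in a common flat family. In either case $\Delta_i(G_m) = \Delta_i(G_{m'})$ in the relevant group, and invariance of $\Delta_i$ under line-bundle twist for $i \ge 2$ converts this into $(p^{im} - p^{im'})\Delta_i(E) = 0$. Thus $\Delta_i(E)$ is torsion in $B^i(X)$ (respectively $\CH^i(X)$) for all $i \ge 2$, and \cite[Lemma 1.7]{La4} rewrites this as the stated identities $r^i c_i(E) = \binom{r}{i} c_1(E)^i$ in $B^i(X)_\QQ$ (respectively $\CH^i(X)_\QQ$); the case $i = 1$ is trivial.

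The main obstacle is the passage from slope semistability to strong slope semistability of the logarithmic Higgs bundle $(E,\theta)$. In the non-logarithmic case this is a known result built on the Ogus--Vologodsky inverse Cartier transform and its iteration. In the logarithmic setup with rank $r \le p$ and $W_2(k)$-liftability one expects the same mechanism to apply: produce via an inverse Cartier transform a logarithmic flat bundle $(V,\nabla)$ with $V \simeq F_X^*E$ as $\cO_X$-modules, verify preservation of slope semistability across the transform together with the analogous statement for Frobenius pullback on the flat side, and iterate so as to obtain strong semistability on the Higgs side. Making this rigorous in the logarithmic setting (with careful control of residues of $\theta$ and $\nabla$, and of the $p$-curvature of the connection) is the substantive technical input; once it is in hand the rest of the argument is a verbatim adaptation of the proofs of Propositions \ref{Bloch-finite-field} and \ref{Bloch-in-B-char-p}.
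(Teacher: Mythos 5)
Your overall numerical scheme (twist by $(\det E)^{-s_m}$ so that $c_1$ takes finitely many values, invoke boundedness of semistable objects with fixed numerical invariants, deduce that $\{p^{im}\Delta_i(E)\}_m$ is finite in $B^i(X)_{\QQ}$ or $\CH^i(X)_{\QQ}$, conclude $\Delta_i(E)$ is torsion and finish with \cite[Lemma 1.11]{La4}) is exactly the paper's endgame. But the object you feed into it is wrong, and the step you flag as ``the substantive technical input'' --- strong slope semistability of the logarithmic Higgs bundle, i.e.\ semistability of $((F_X^m)^*E,(F_X^m)^*\theta)$ for all $m$ --- is not a meaningful intermediate goal. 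As the paper itself observes just before Proposition \ref{Bloch-finite-field}, the Higgs field of a Frobenius pullback vanishes (the natural map $F_X^*\Omega_X\to\Omega_X$ is zero in characteristic $p$), so ``strong Higgs semistability'' of $(E,\theta)$ would amount to strong semistability of the underlying bundle $E$ with the \emph{zero} Higgs field. That is false for general semistable Higgs bundles: the whole point of the Higgs field is that $E$ itself may be very unstable (e.g.\ $\cO(1)\oplus\cO(-1)$ on a curve with an off-diagonal $\theta$). The same collapse happens on the flat side: the Frobenius pullback of $(V,\nabla)$ carries the canonical connection, and its semistability is again equivalent to semistability of the plain bundle $V$. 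So there is no iteration of the form ``$C^{-1}$, then Frobenius pullback'' that keeps you inside the semistable world while producing $(F_X^m)^*E$.

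What the paper actually iterates is the Higgs--de Rham flow: $(E_0,\theta_0)=(E,\theta)$, $(V_m,\nabla_m)=C^{-1}(E_m,\theta_m)$, and $(E_{m+1},\theta_{m+1})=\Gr_{S_m}(V_m,\nabla_m)$, the associated graded of \emph{Simpson's filtration}, which is again a slope $H$-semistable logarithmic system of Hodge sheaves (and a vector bundle, by \cite[Theorem 2.2]{La4}). The bundles $E_m$ are \emph{not} the Frobenius pullbacks $(F_X^m)^*E$; they change the underlying $\cO_X$-module at every step. What survives is only the numerics, $\Delta_i(E_m)=p^{im}\Delta_i(E)$, and that is all the boundedness argument needs. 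So the missing idea in your write-up is the insertion of Simpson's filtration and passage to its associated graded between the inverse Cartier transform and the next semistability statement; without it your family $G_m=(F_X^m)^*E\otimes(\det E)^{-s_m}$ need not consist of semistable objects and the appeal to \cite[Theorem 4.4]{La1} (which in the Higgs setting should be \cite[Theorem 3.4]{La1} or \cite[Theorem 1.2]{La4}) has no footing. Once the flow replaces the Frobenius pullbacks, the rest of your argument matches the paper's.
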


\begin{proof}
	The proof is quite similar to proofs of Propositions \ref{Bloch-finite-field} and \ref{Bloch-in-B-char-p}. It 
	essentially follows from the proof of \cite[Theorem 2.2]{La4}, so we only sketch the arguments. 
	Our assumptions imply that we can consider the canonical Higgs-de Rham sequence starting with $(E_0,\theta_0)=(E, \theta)$
	$$ \xymatrix{
		& (V_0, \nabla _0)\ar[rd]^{\Gr _{S_0}}&& (V_1, \nabla _1)\ar[rd]^{\Gr _{S_1}}&\\
		(E_0, \theta _0)\ar[ru]^{C^{-1}}&&(E_1, \theta_1)\ar[ru]^{C^{-1}}&&...\\
	}$$ 
	where $C^{-1}$ denotes the inverse Cartier transform and $(E_{m+1}, \theta _{m+1})$ is a slope $H$-semistable
	logarithmic system of Hodge sheaves associated to $(V_i, \nabla _i)$ via Simpson's
	filtration. \cite[Theorem 2.2]{La4} implies that all $E_i$ are vector bundles. 
	
	Let us write $p^m=rs_m+q_m$ for some non-negative integers $s_m$
	and $0\le q_m<r$ and set $(G_m, \theta _{G_m}):=(E_m, \tilde
	\theta _m)\otimes \det E^{-s_m}$. We have equalities
	$\Delta_i (G_m)=\Delta_i(E_m)=p^{im}\Delta _i(E)$ in $\CH^i (X)_{\QQ}$ for $1\le i\le n$,
	so  $\int _X\Delta (G_m)\cdot H^{n-2}=0$.  Note also that  $c_1(G_m)= q_m c_1(E)$ in $\CH^1 (X)_{\QQ}$ 
	can take only  finitely many values,  so \cite[Theorem 3.4]{La1} (or more precisely \cite[Theorem 1.2]{La4}) implies that the family of $H$-semistable logarithmic Higgs bundles $\{(G_m, \theta _{G_m})\} _{m\ge 0}$ is
	bounded. In particular, the set $\{\Delta_i (G_m)\}_{m\ge
		0}=\{p^{nm}\Delta _i(E)\}_{m\ge 0}$ considered in $B^i (X)_{\QQ}$ is finite. Hence $\Delta
	_i(E)=0$ in $B^i (X)_{\QQ}$ for all $i\ge 0$, which implies the required equalities by \cite[Lemma 1.11]{La4}.
	
	To see the last part of the theorem one needs to note that if $k$ is an algebraic closure of a finite field then
	all $(G_m, \theta _{G_m})$ are defined over the same finite field so the set $\{\Delta_i (G_m)\}_{m\ge
		0}=\{p^{nm}\Delta _i(E)\}_{m\ge 0}$ is actually finite in $\CH^i (X)_{\QQ}$.
\end{proof}

\begin{remark}\label{Bloch-in-char-p-MIC}
	In the above theorem we can replace the Higgs bundle $(E, \theta)$ with a slope $H$-semistable bundle with a logarithmic integrable connection. In particular, we get proof of a version of usual Bloch's conjecture
	in positive characteristic. 
\end{remark}

\subsection{Can we prove Bloch's conjecture by reduction?} \label{reductions?}

Let $X$ be a smooth complex projective variety and let $(V, \nabla)$ be a rank $r$ flat vector bundle on $X$. 
Let us recall that flat vector bundles have vanishing Chern classes so they are slope semistable with respect to an arbitrary ample divisor.

There exists a finitely generated over $\ZZ$ subring $R\subset \CC$ , a smooth projective scheme $\cX \to S=\Spec R$ and a vector bundle $(\cV, \nabla _{\cV})$ with a relative integrable connection with respect to $S$
such that $X=\cX \otimes_R {\CC}$ and   $(V, \nabla) = (\cV, \nabla _{\cV})\otimes _R \CC$. We can also fix a relative ample line bundle $\cH$.

For an open subset $U\subset S$ and all closed points $s\in U$ the reduction $\cX_s$ is liftable to $W_2 (k(s))$
and $(\cV _s, (\nabla _{\cV})_s)$ are slope $\cH _s$-semistable (by openness of slope semistability).
Since $k(s)$ is finite we can use Theorem \ref{Bloch-in-char-p-Higgs} (see Remark \ref{Bloch-in-char-p-MIC})
to get for all closed points $s\in U$ equalities  
$$r^i c_i(\cV_s) = \binom{r}{i} c_1(\cV_s)^i$$ 
in $\CH^i (\cX _s)_{\QQ}$.

This motivates the following questions:

\begin{question}
Let us fix a class  $C\in \CH ^i (\cX)$. Assume that for all closed points $s\in U$ the class $C_s$ is torsion in $\CH ^i (\cX _s)$. 
\begin{enumerate}
	\item Is it true that $C_{\CC}\otimes {\QQ}$ lies in $F^i \CH ^i (X)_{\QQ}$?
	\item Is it true that the class of  $C_{\CC}$ is torsion in $B^i (X)$?
\end{enumerate}
\end{question}

If the answer to question 1 is positive then by the above Bloch's conjecture \ref{Bloch} holds. If the answer to question 2 is positive then weak Bloch's conjecture \ref{weak-Bloch} holds. Both questions are easily seen to have positive answers in case $i=1$. Moreover, the second question has a positive answer for $i=\dim X$.

Note that the first question is somewhat vague as we do not know existence of the Bloch--Beilinson filtration. 
In general, we cannot expect that  $C_{\CC}\otimes {\QQ}$  vanishes in $\CH ^i (X)_{\QQ}$. Indeed, if $L\in \Pic ^0 (X)$ is a non-torsion line bundle then all reduction of  $L$ to positive characteristic are  torsion line bundles (as they are defined over a finite field), whereas the class of $c_1 (L)\otimes \QQ $ in  $\CH ^1 (X)_{\QQ}$ is non-zero.

\section*{Acknowledgements}
The author would like to thank  H\'el\`ene Esnault, Marc Levine and Carlos Simpson for helpful e-mail discussions.
The author was partially supported by Polish National Centre (NCN) contract numbers 
2018/29/B/ST1/01232.

\end{document}